\numberwithin{equation}{section}
\newtheorem{theorem}{Theorem}[section]{\bfseries}{\it}
{\bfseries}{\it}
\newtheorem{lemma}[theorem]{Lemma}{\bfseries}{\it}
\newtheorem{corollary}[theorem]{Corollary}{\bfseries}{\it}
\newtheorem{definition}{Definition}[section]{\bfseries}{\it}
{\bfseries}{\it}
\theoremstyle{definition}
\newtheorem{remark}{Remark}[section]{\bfseries}{\rmfamily}
\renewcommand{\dim}{d}
\newcommand{\p}{\partial}
\newcommand{\abs}[1]{\lvert#1\rvert} 
\newcommand{\tends}{\rightarrow}
\newcommand{\norm}[1]{\lVert#1\rVert}
\newcommand{\R}{\mathbb{R}}
\newcommand{\N}{\mathbb{N}}
\newcommand{\cF}{\mathcal{F}}
\newcommand{\cT}{\mathcal{T}}
\newcommand{\cS}{\mathcal{S}}
\newcommand{\eps}{\epsilon}
\newcommand{\avg}[1]{\left\{#1\right\}}
\newcommand{\jump}[1]{\llbracket #1 \rrbracket}
\newcommand{\Om}{\Omega}
\newcommand{\om}{\omega}
\newcommand{\ab}{{\alpha\beta}}
\newcommand{\sA}{\mathscr{A}}
\newcommand{\sB}{\mathscr{B}}
\newcommand{\al}{\alpha}
\newcommand{\be}{\beta}
\newcommand{\Fop}{F}
\DeclareMathOperator{\Tr}{Tr}
\DeclareMathOperator{\diam}{diam}
\DeclareMathOperator{\Div}{div}
\DeclareMathOperator{\supp}{supp}
\newcommand{\bn}{\bm{n}}
\newcommand{\Fp}{\cF^+}
\newcommand{\Fpi}{\cF^{I+}}
\newcommand{\Fpb}{\cF^{B+}}
\newcommand{\Tp}{\cT^+}
\newcommand{\Tk}{\cT_k}
\newcommand{\Omp}{\Omega^+}
\newcommand{\Omm}{\Omega^-}
\newcommand{\Tkj}{\Tk^{j+}}
\newcommand{\Fkp}{\cF_k^+}
\newcommand{\Fk}{\cF_k}
\newcommand{\Npw}{\nabla}
\newcommand{\Dpw}{\nabla^2}
\newcommand{\Sp}{\cS^+}
\newcommand{\Sk}{\cS_k}
\newcommand{\Skp}{\cS_k^+}
\newcommand{\Pp}{\mathbb{P}_p}
\newcommand{\normk}[1]{\norm{#1}_{k}}
\newcommand{\Vk}{V_k}
\newcommand{\pair}[1]{\langle#1\rangle}
\newcommand{\HD}{H^2_D(\Om;\Tp)}
\newcommand{\Dp}{\Npw}
\newcommand{\Hessp}{\Dpw}
\newcommand{\Tkp}{\Tk^+}
\newcommand{\Vinfty}{V_\infty}
\newcommand{\Tkm}{\Tk^-}
\newcommand{\Fki}{\Fk^I}
\newcommand{\Fkb}{\Fk^B}
\newcommand{\Fkip}{\Fk^{I+}}
\newcommand{\br}{\bm{r}}
\newcommand{\Flp}{\cF^{\dagger}_{\ell}}
\newcommand{\rk}{\br_k(\jump{\Npw v_k})}
\newcommand{\Flpi}{\cF^{I\dagger}_{\ell}}
\newcommand{\Cid}{C^\infty_0\big(\Om;\R^\dim\big)}
\newcommand{\CidR}{C^\infty_0\big(\R^\dim;\R^{\dim}\big)}
\newcommand{\Cidd}{C^\infty_0\big(\Om;\R^{\dim\times\dim}\big)}
\newcommand{\bvphi}{\bm{\varphi}}
\newcommand{\bphi}{\bm{\phi}}
\newcommand{\Ld}{L^2(\Om;\R^\dim)}
\newcommand{\Ldd}{L^2(\Om;\R^{\dim\times\dim})}
\newcommand{\Fks}{\cF^{\dagger}_k}
\newcommand{\Fksi}{\cF^{I\dagger}_k}
\newcommand{\Vq}{V_{k,q}^0}
\newcommand{\Vct}{\mathrm{HCT}(K)}
\newcommand{\Spi}{\mathcal{S}^{I+}}
\newcommand{\Skpi}{\mathcal{S}^{I+}_k}
\newcommand{\Jpen}{J_k^{\sigma,\rho}}
\newcommand{\Fg}{F_{\gamma}}
\newcommand{\absJ}[1]{\abs{#1}_{J,k}}
\newcommand{\ek}{\eta_k}
\newcommand{\Mk}{\mathcal{M}_k}
\newcommand{\Hinf}{\bm{H}_\infty}
\newcommand{\Hk}{\bm{H}_k}
\newcommand{\Skf}{S_k}
\newcommand{\Sinfty}{S_\infty}
\newcommand{\cmon}{C_{\mathrm{mon}}}
\newcommand{\tauk}{\tau_{\p K}}
\newcommand{\Fqed}{}
  \title{Convergence of adaptive discontinuous Galerkin and $C^0$-interior penalty finite element methods for Hamilton--Jacobi--Bellman and Isaacs equations\footnotemark[1]}
  \author{Ellya L.~Kawecki\footnotemark[2]~ and Iain Smears\footnotemark[2]}
\begin{document}

 \maketitle

 \renewcommand{\thefootnote}{\fnsymbol{footnote}}
\footnotetext[1]{This work was supported by an Engineering and Physical Sciences Research Council (EPSRC) Doctoral Prize Fellowship under grant EP/R513143/1.}
\footnotetext[2]{Department of Mathematics, University College London, Gower
Street, WC1E 6BT London, United Kingdom (\texttt{e.kawecki@ucl.ac.uk}, \texttt{i.smears@ucl.ac.uk}).}

\begin{abstract}
We prove the convergence of adaptive discontinuous Galerkin and $C^0$-interior penalty methods for fully nonlinear second-order elliptic Hamilton--Jacobi--Bellman and Isaacs equations with Cordes coefficients.
We consider a broad family of methods on adaptively refined conforming simplicial meshes in two and three space dimensions, with fixed but arbitrary polynomial degrees greater than or equal to two.
A key ingredient of our approach is a novel intrinsic characterization of the limit space that enables us to identify the weak limits of bounded sequences of nonconforming finite element functions.
We provide a detailed theory for the limit space, and also some original auxiliary functions spaces, that is of independent interest to adaptive nonconforming methods for more general problems, including Poincar\'e and trace inequalities, a proof of the density of functions with nonvanishing jumps on only finitely many faces of the limit skeleton, approximation results by finite element functions and weak convergence results.
\end{abstract}

\tableofcontents

\section{Introduction}
We study the convergence of a broad class of adaptive discontinuous Galerkin (DG) and $C^0$-interior penalty (IP) finite element methods (FEM) for second-order fully nonlinear Isaacs equations, with a homogeneous Dirichlet boundary condition, of the form
\begin{equation}\label{eq:isaacs}
\begin{aligned}
F[u]\coloneqq \inf_{\alpha\in\sA}\sup_{\beta\in\sB}\left[L^\ab u-f^\ab\right] & = 0 &&\text{in }\Om,\\ 
u & = 0  && \text{on }\p\Om,
\end{aligned}
\end{equation}
where $\Om$ is a nonempty bounded convex polytopal open set in $\R^\dim$, $\dim\in\{2,3\}$, where $\sA$ and $\sB$ are nonempty compact metric spaces, and where the second-order nondivergence form elliptic operators $L^\ab$, $\al\in\sA,\be\in\sB$, are defined in~\eqref{eq:Lab_operators} below.
It is equally possible to consider Isaacs equations with the reverse order of the infimum and supremum in~\eqref{eq:isaacs}.
Isaacs equations arise in models of two-player stochastic differential games. 
If $\sA$ is a singleton set, then the Isaacs equation~\eqref{eq:isaacs} reduces to a Hamilton--Jacobi--Bellman (HJB) equation for the value function of the associated stochastic optimal control problem~\cite{FlemingSoner06}.
These equations find applications in a wide range of fields, such as engineering, energy, finance and computer science.
HJB and Isaacs equations are important examples of~\emph{fully nonlinear} partial differential equations (PDE), where the nonlinearity includes the second-order partial derivatives of the unknown solution, thereby prohibiting standard approaches via weak formulations that are commonly employed for divergence-form elliptic problems.
Several other important nonlinear PDE can be reformulated as Isaacs or HJB equations, including for instance the Monge--Amp\`ere equation~\cite{FengJensen17,Krylov87}; see also~\cite{Kawecki2018a}. 

There still remain significant challenges in the design and analysis of stable, efficient and accurate numerical methods for fully nonlinear PDE such as~\eqref{eq:isaacs}.
Numerical methods that enjoy a discrete maximum principle can be shown to converge to the exact solution, in the sense of viscosity solutions, under rather general conditions which in particular allow the treatment of possibly degenerate elliptic problems~\cite{Souganidis91,CrandallIshiiLions92,KuoTrudinger1990,KushnerDupuis01}.
However, it is well-known that the need for a discrete maximum principle leads to significant costs in terms of computational efficiency, in terms of the order of accuracy, the flexibility of the grids and the locality of the stencils for strongly anisotropic diffusions~\cite{CrandallLions96,Kocan95,MotzkinWasow53}.
We refer the reader to \cite{DebrabantJakobsen13,FengJensen17,JensenS13,NochettoZhang18,SalgadoZhang19} for recent results and further discussion on this class of numerical methods.

Recently there has been significant interest in the design and analysis of methods that do not require discrete maximum principles for fully nonlinear PDE.
However, designing provably stable and convergent methods  without a discrete maximum principle remains generally challenging.
In the series of papers~\cite{SS13,SS14,SS16}, this obstacle was overcome in the context of fully nonlinear HJB equations that satisfy the Cordes condition~\cite{Cordes1956,MaugeriPalagachevSoftova00}, which is an algebraic condition on the coefficients of the differential operator.
In particular, for fully nonlinear HJB equations on convex domains with Cordes coefficients, existence and uniqueness of the strong solution in $H^2(\Om)\cap H^1_0(\Om)$ was proved in~\cite{SS14} using a variational reformulation in terms of a strongly monotone operator equation.
It was then shown in~\cite{SS13,SS14} that the structure of the continuous problem can be preserved under discretization, forming the basis for a provably stable $hp$-version discontinuous Galerkin (DG) finite element method (FEM), with stability achieved in a mesh-dependent $H^2$-type norm, and with optimal convergence rates with respect to the mesh-size, and only half-order suboptimal rates with respect to the polynomial degree, under suitable regularity assumptions.
Moreover, the method was shown to be stable for general shape-regular simplicial and parallelipipedal meshes in arbitrary dimensions, thus opening the way towards adaptive refinements.
These results were then extended to the parabolic setting in~\cite{SS16}.
This approach has sparked significant recent activity exploring a range of directions, including $H^2$-conforming and mixed methods~\cite{Gallistl17,Gallistl19}, preconditioners~\cite{S18}, $C^0$-IP methods~\cite{Bleschmidt19,Kawecki19c,NeilanWu19}, curved elements~\cite{Kawecki19b}, and other types of boundary conditions~\cite{Gallistl19b,Kawecki19}.
Note that in the context of these problems, DG and $C^0$-IP methods are examples of nonconforming methods, since the appropriate functional setting is in $H^2$-type spaces.
In~\cite{KaweckiSmears20}, we provide a unified analysis of \emph{a posteriori} and \emph{a priori} error bounds for a wide family of DG and $C^0$-IP methods, where we also show that the original method of~\cite{SS13,SS14}, along with many related variants, are quasi-optimal in the sense of  near-best approximations without any additional regularity assumptions, along with convergence in the small mesh-limit for minimal regularity solutions.

We are interested here in \emph{adaptive} methods for Isaacs and HJB equations based on successive mesh refinements driven by computable error estimators.
The first work on adaptivity for these problems is due to Gallistl~\cite{Gallistl17,Gallistl19}, who proved convergence of an adaptive scheme for some $C^1$-conforming and mixed method approximations.
In particular, the analysis there follows the framework of~\cite{MorinSiebertVeeser08}, where the key tool in the proof of convergence is the introduction of a suitable limit problem posed on a limit space of the adaptive approximation spaces, and a proof of convergence of the numerical solutions to the limit problem.
Note that in the case of nested conforming approximations, the limit space is obtained simply by closure of the sequence of approximation spaces with respect to the norm; however many standard $C^1$-conforming elements, such as Argyris or Hsieh--Clough--Tocher (HCT) elements, do not lead to nested spaces in practice. 
More broadly, the analysis of adaptive methods for Isaacs and HJB equations is still in its infancy, and the analysis of rates of convergence of the adaptive algorithms remains open.

Even in the case of linear divergence-form equations, the construction and analysis of the corresponding limit spaces for adaptive nonconforming methods is less obvious than for the conforming methods, and this was only recently addressed by Kreuzer \& Georgoulis in~\cite{KreuzerGeorgoulis18} for DGFEM discretizations of divergence-form second-order elliptic equations.
Their approach has been extended to $C^0$-IP methods for the biharmonic equation in~\cite{DominincusGaspozKreuzer19}; we refer the reader to these references for further discussion of the literature on adaptivity for DGFEM for other PDE.
An advantage of the approach of~\cite{DominincusGaspozKreuzer19,KreuzerGeorgoulis18} is that the analysis encompasses all choices of the penalty parameters that are sufficient for stability of the methods.
Note that a further difficulty for the analysis of adaptive methods for both the biharmonic problem in~\cite{DominincusGaspozKreuzer19} and also for the fully nonlinear HJB and Isaacs equations considered here is the general absence of a sufficiently rich $H^2$-conforming subspace for DG and $C^0$-IP methods, which prevents a range of techniques employed in $H^1$-type settings~\cite{KarakashianPascal07,KreuzerGeorgoulis18}.

In this paper, we analyse in a single framework a broad family of DG and $C^0$-IP methods that are based on the original method of \cite{SS13,SS14} and recent variants.
These methods have significant advantages over $C^1$-conforming elements in terms of practicality, flexibility and computational cost.
They also require fewer unknowns than mixed methods.
We prove the plain convergence of a class of adaptive DG and $C^0$-IP methods on conforming simplicial meshes in two and three space dimensions for fixed but arbitrary polynomial degrees greater than or equal to two, and for all choices of penalty parameters that are sufficient for stability of the discrete problems.
Similar to~\cite{DominincusGaspozKreuzer19,Gallistl17}, the only condition on the marking strategy is that the set of elements marked for refinement at each step must include the element with maximum error estimator; in practice this allows for all relevant marking strategies.

In addition, we make several wider contributions to the general analysis of adaptive nonconforming methods in order to overcome some critical challenges appearing in the analysis, as we now explain.
The bedrock of our strategy for proving convergence of the adaptive methods is in the spirit of monotone operator theory: by showing weak precompactness in a suitable sense for the bounded sequence of numerical solutions, and by showing the asymptotic consistency of the numerical scheme, we use a \emph{strong times weak convergence} argument and the strong monotonicity of the problem to turn weak convergence of subsequences of numerical solutions into strong convergence of the whole sequence to the solution of the limit problem. 
However, this step rests upon a proof that the weak limits of bounded sequences of finite element functions indeed belongs to the correct limit space, which, in the existing approaches of~\cite{DominincusGaspozKreuzer19,KreuzerGeorgoulis18}, requires a proof that the weak limit can also be approximated by a strongly convergent sequence of finite element functions.
Note that this is handled in~\cite{DominincusGaspozKreuzer19} for piecewise quadratic $C^0$-IP methods in two space dimensions using rather specific relations between the degrees of freedom of quadratic $C^0$-Lagrange elements and 4th-order HCT elements.
However, the extension to DG methods represents a significant challenge, which we resolve here in a unified way for both DG and $C^0$-IP methods in both two and three space dimensions.
A key ingredient of our analysis is a novel approach to the construction and analysis of the limit spaces, namely we provide intrinsic characterizations of the limit spaces, without reference to strongly approximating sequences of finite element functions.
This constitutes a foundational change from~\cite{DominincusGaspozKreuzer19,KreuzerGeorgoulis18} in terms of how we approach the analysis.
In particular, starting in Section~\ref{sec:limspace}, we define the limit spaces, along with some related more general first- and second-order spaces, directly via characterizations of the distributional derivatives of the function and its gradient and via appropriate integrability properties, see~Definitions~\ref{def:H1limitspaces},~\ref{def:HD_def} and~\ref{def:limit_space} of Section~\ref{sec:limspace} below.
This is done in the spirit of the definition of Sobolev spaces in terms of weak derivatives.
Some further benefits of this approach are significant simplifications in the theory, especially with regard to completeness of the spaces and weak precompactness of bounded sequences of finite element functions, as well as a broader understanding of the nature of the limit spaces.
We stress that this approach is by no means limited to HJB and Isaacs equations, and it is of general interest to the analysis of nonconforming adaptive methods for more general problems.

Our intrinsic approach to the limit spaces ultimately connects to~\cite{DominincusGaspozKreuzer19,KreuzerGeorgoulis18} since we also prove that the functions in the limit spaces are also limits of strongly converging sequences of finite element functions, see Theorem~\ref{thm:limit_space_characterization}.
This requires addressing a particular fundamental difficulty in the case of DG methods, as we now explain.
For DG methods, the limit space can be seen as a specific subspace of $SBV^2(\Om)$, where $SBV^2(\Om)$ denotes the space of functions of special bounded variation \cite{DeGiorgiAmbrosio88} with gradient density also of special bounded variation, see e.g.\ \cite{FonsecaLeoniParoni05} for a precise definition.
A surprising result due to~\cite{FonsecaLeoniParoni05}, based on an earlier result from~\cite{Alberti1991}, is that in general there exists functions in $SBV^2(\Om)$ with nonsymmetric Hessians, and it is easy to see that such functions cannot be strong limits in the required sense of finite element functions.
One of our key results here is that the intrinsic properties the limit space, in particular the integrability properties and the structure of the jump sets, are sufficient to guarantee the symmetry of the Hessians and thereby rule out such pathological functions.
The key step in the analysis is a crucial approximation result, namely the density of the subspace of functions with only finitely many jumps over the set of faces that are never refined, see~Theorem~\ref{thm:finite_approx} below, which we use to prove the symmetry of the Hessians of these functions in~Corollary~\ref{cor:H2_omm_restriction}. 
These results are obtained without \emph{a priori} knowledge of the existence of strongly convergent sequences of finite element functions, and thus resolves the challenge highlighted above.

The paper is organised as follows. Section~\ref{sec:notation} sets the notation and defines the DG and $C^0$-IP finite element spaces.
In section~\ref{sec:var_fem} we state our main assumptions on the problem~\eqref{eq:isaacs}, and recall some well-posedness results from~\cite{SS14,KaweckiSmears20}.
Section~\ref{sec:var_fem} then introduces the family of adaptive DG and $C^0$-IP methods that are considered, and states our main result on convergence of the adaptive algorithm in Theorem~\ref{thm:main}.
In Section~\ref{sec:limspace} we study the limit spaces as described above, and in Section~\ref{sec:limit_problem_proof} we introduce the limit problem, and prove our main result on the convergence of the adaptive algorithm.

  \section{Notation}\label{sec:notation}
Let $\Om\subset \R^\dim$ be a bounded convex polytopal open set in $\R^\dim$, $\dim\in\{2,3\}$.
For a Lebesgue measurable set $\om \subset \R^\dim$, let $\abs{\om}$ denote its Lebesgue measure, and let $\diam(\om)$ denote its diameter. The $L^2$-norm of functions over $\omega$ is denoted by $\norm{\cdot}_{\om}$.
For two vectors $\bm{v}$ and $\bm{w}\in \R^\dim$, let $\bm{v}\otimes\bm{w}\in \R^{\dim\times\dim}$ be defined by $(\bm{v}\otimes\bm{w})_{ij}=\bm{v}_i \bm{w}_j$.
Let~$\{\Tk\}_{k\in\N}$ be a shape-regular sequence of conforming simplicial meshes on $\Om$. We have in mind sequences of meshes $\{\Tk\}_{k\in\N}$ that are obtained by successive refinements without coarsening from an initial mesh $\cT_1$. More precisely, we assume the framework of~\cite{MorinSiebertVeeser08} of \emph{unique quasi-regular element subdivisions.}  
The adaptive process that determines the mesh refinement is presented in Section~\ref{sec:var_fem} below. For real numbers $a$ and $b$, we write $a\lesssim b$ if there exists a constant $C$ such that $a\leq C b$, where $C$ depends only on the dimension $\dim$, the domain $\Om$, and on the shape-regularity of the meshes and on the polynomial degrees $p$ and $q$ defined below, but is otherwise independent of all other quantities. We write $a\eqsim b$ if and only if $a\lesssim b$ and $b\lesssim a$.
For each $k\in\N$, let $\Fk$ denote the set of $\dim-1$ dimensional faces of the mesh~$\Tk$, and let $\Fk^I$ and~$\Fk^B$ denote the set of internal and boundary faces of $\Tk$ respectively.
Let $\Sk$ denote the \emph{skeleton} of the mesh~$\Tk$, i.e.\ $\Sk\coloneqq\bigcup_{F\in\Fk} F$, and let $\cS^I_k\coloneqq \bigcup_{F\in\Fki}F$ denote the internal skeleton of $\Tk$.
For each $F\in\Fk$, $k\in\N$, let $\bn_F$ be a fixed choice of unit normal vector to $F$, where the choice of unit normal must be independent of $k$ and solely dependent on $F$.
If $F$ is a boundary face then $\bn_F$ is chosen to be the outward normal to~$\Om$.
In a slight abuse of notation, we shall usually drop the subscript and simply write $\bn$ when there is no possibility of confusion.
For each $K\in \Tk$, $k\in\N$, let $h_K \coloneqq \abs{K}^{\frac{1}{\dim}}$; note that shape-regularity of the meshes imply that $h_K \eqsim \diam(K)$.
For each $F\in\Fk$, let $h_F \coloneqq \left(\mathcal{H}^{\dim-1}(F)\right)^{\frac{1}{\dim-1}}$, where $\mathcal{H}^{\dim-1}$ denotes the $(d-1)$-dimensional Hausdorff measure.
Shape-regularity also implies that $h_K\eqsim h_F$ for any element $K\in\Tk$ and any face $F\in\Fk$ contained in $K$.
Similarly, shape-regularity implies that $h_F\eqsim \diam(F)$ for all $F\in\Fk$, $k\in\N$.
For each $k\in\N$, we define the global mesh-size function $h_k\colon \overline{\Om}\tends\R$ by $h_k|_{K^\circ}=h_K$ for each $K\in\Tk$, where $K^\circ$ denotes the interior of $K$, and $h_k|_F=h_F$ for each $F\in\Fk$.
The functions $\{h_k\}_{k\in\N}$ are uniformly bounded in $\Om$ and are only defined up to sets of zero $\mathcal{H}^{d-1}$-measure, which will be sufficient for our purposes.
We say that two elements are neighbours if they have nonempty intersection. 
For each $K\in \Tk$ and $j\in \N_0$, we define the set $N_k^j(K)$ of $j$-th neighbours of $K$ recursively by setting $N_k^0(K) \coloneqq K$, and then defining $N_k^j(K)$ as the set of all elements in $\Tk$ that are neighbours of at least one element in $N_k^{j-1}(K)$.
For the case $j=1$ we drop the superscript and write $N_k^1(K)=N_k(K)$.

It will be frequently convenient to use a shorthand notation for integrals over collections of elements and faces of the meshes.
For collections of elements $\mathcal{E}\subset \bigcup_{k\in\N}\Tk$ that are disjoint up to sets of $\dim$-dimensional Lebesgue measure zero, we write $\int_{\mathcal{E}} \coloneqq \sum_{K\in\mathcal{E}}\int_K$, where the measure of integration is the Lebesgue measure on $\R^\dim$.
Likewise, if $\mathcal{G}\subset \bigcup_{k\in\N} \Fk$ is a collection of faces that are disjoint up to sets of zero $\mathcal{H}^{\dim-1}$-measure, then we write $\int_{\mathcal{G}} \coloneqq \sum_{F\in\mathcal{G}}\int_F$, where the measure of integration is the $(\dim-1)$-dimensional Hausdorff measure on $\R^\dim$.
Note that in the case where $\mathcal{E}$ or $\mathcal{G}$ are countably infinite, the notation $\int_{\mathcal{E}}$ and $\int_{\mathcal{G}}$ represent infinite series whose convergence will be determined as necessary. We do not write the measure of integration as there is no risk of confusion.

\subsection{Derivatives and traces of functions of bounded variation.}\label{sec:BV}
We recall some known results about spaces of functions of bounded variation~\cite{AmbrosioFuscoPallara00,EvansGariepy2015}. For an open set $\om\subset\Om$, let $BV(\om)$ denote the space of real-valued functions of bounded variation on $\omega$.
Recall that $BV(\om)$ is a Banach space equipped with the norm $\norm{v}_{BV(\om)}\coloneqq\norm{v}_{L^1(\om)}+\abs{Dv}(\om)$, where $\abs{Dv}(\om)$ denotes the total variation of its distributional derivative $Dv$ over $\omega$, defined by $\abs{Dv}(\om)\coloneqq \sup\left\{\int_\om v \Div \bphi\colon \bphi\in C^\infty_0(\om;\R^\dim),\|\bphi\|_{C(\overline{\om};\mathbb R^d)}\le1 \right\}$.
To simplify the notation below, we also define $BV(\overline{\om})\coloneqq BV(\om)$ where $\overline{\omega}$ is the closure of $\omega$.
In the following, we shall frequently have to handle functions of bounded variation that are typically only piecewise regular over different and possibly infinite subdivisions of~$\Omega$, and the analysis is greatly simplified by adopting a notation that unifies and generalises various familiar concepts of weak and piecewise derivatives.
In particular we follow the notation of~\cite{FonsecaLeoniParoni05}.
For any $v\in BV(\Omega)$, the distributional derivative $Dv$ can be identified with a Radon measure on $\Omega$ that can be decomposed into the sum of an absolutely continuous part with respect to Lebesgue measure, and a singular part; the density of the absolutely continuous part of $D v$ with respect to Lebesgue measure is denoted by
\begin{equation}\label{eq:nabla_notation}
\Npw v =(\nabla_{x_1} v,\dots \nabla_{x_\dim} v) \in L^1(\Om;\R^\dim).
\end{equation}
Following \cite{FonsecaLeoniParoni05}, for functions $v\in BV(\Om)$ such that $\Npw v \in BV(\Om;\R^\dim)$, we define $\Dpw v$ as the density of the absolutely continuous part of $D(\nabla v)$, the distributional derivative of $\nabla v$; in particular, 
\begin{equation}\label{eq:Hessian_notation}
\begin{aligned}
\nabla^2 v \coloneqq \nabla(\nabla v) \in L^1(\Om;\R^{\dim\times\dim}), &&&
(\nabla^2 v)_{ij} \coloneqq \nabla_{x_j} (\nabla_{x_i} v) \quad \forall i,\, j\in \{1,\dots,\dim\}.
\end{aligned}
\end{equation}
We then define the Laplacian $\Delta v = \Tr \Dpw v$, where $\Tr \bm{M}\coloneqq\sum_{i=1}^\dim \bm{M}_{ii}$ is the matrix trace for $\bm{M}\in\R^{\dim\times\dim}$.
We emphasize that $\Dpw v$ is defined in terms of $D(\nabla v)$ and not $D^2v$, the second distributional derivative of $v$, since in general $D^2 v$ is not necessarily a Radon measure.
Crucially, there is no conflict of notation here when considering Sobolev regular functions, since $\Npw v$ coincides with the weak gradient of $v$ if $v\in W^{1,1}(\Om)$ and $\Dpw v$ coincides with the weak Hessian of $v$ if $v\in W^{2,1}(\Om)$.
Moreover, for functions from the DG and $C^0$-IP finite element spaces defined shortly below, it is easy to see that the gradient and Hessian as defined above coincide with the piecewise gradient and Hessian over elements of the mesh.
Therefore, the above notation \emph{unifies} and \emph{generalises} the above notions of derivatives.
Furthermore, the more general notions of gradients and Hessians defined above play a key role in the formulation of intrinsic definitions of the limit spaces of the sequence of finite element spaces given in Section~\ref{sec:limspace}. 

\paragraph{Jump and average operators.}
We recall some known results concerning one-sided traces of functions of bounded variation.
It follows from \cite[Theorems~5.6 \& 5.7]{EvansGariepy2015} that for each interior face $F\in\Fki$, $k\in\N$, there exist bounded one-sided trace operators $\tau_F^+\colon BV(\Om)\tends L^1(F)$ and $\tau_F^-\colon BV(\Om)\tends L^1(F)$, where the notation $\tau_F^\pm$ is determined by the chosen unit normal $\bn_F$ so that $\tau_F^-$ and $\tau_F^+$ are the traces from the sides of $F$ for which $\bn_F$ is outward pointing and inward pointing, respectively.
If $F$ is a boundary face, we only define its interior trace $\tau_F^-$, where it is recalled that $\bn_F$ is outward pointing to $\Om$.
In particular, \cite[Theorem~5.7]{EvansGariepy2015} shows that, for any $v\in BV(\Om)$, we have $\tau_F^\pm v(x) =\lim_{r\tends 0} \frac{1}{\abs{B_{\pm}(x,r)}} \int_{B_{\pm}(x,r)} v$ for $\mathcal{H}^{\dim-1}$-a.e.\ $x\in F$, where $B_{\pm}(x,r)\coloneqq\{y\in \Om\colon \abs{x-y}<r,\, (y-x)\cdot \bn_F \in\R_{\pm} \}$ are half-balls centred on $x$ of radius $r$, for which $\bn_F$, and where $\R_+$ and $\R_-$ denote the sets of nonnegative and nonpositive real numbers, respectively.
Therefore, the values of the traces do not depend on a choice of surrounding element from any particular mesh.
However, the $L^1$-norm of traces on faces can be bounded in terms of the BV-norm on elements as follows. For each element $K\in \Tk$, $k\in\N$, let $\tauk \colon BV(K)\tends L^1(\p K)$ denote the corresponding trace operator from $K$ to $\p K$.
For instance, if $F$ is a face and if $K$ is an element containing $F$ for which $\bn_F$ is outward pointing, then $\norm{\tau_F^- v}_{L^1(F)}\leq \norm{\tauk v}_{L^1(\p K)}\lesssim \abs{Dv}(K)+\frac{1}{h_K}\norm{v}_{L^1(K)}$ for all $v\in BV(K)$; a similar bound holds for $\tau_F^+$ if $\bn_F$ is inward pointing with respect to $K$.
In other words, the $L^1$-norm of the appropriate one-sided trace is bounded by the BV-norm of a function over the element containing the face.
We now define jump and average operators over faces.
For $v\in BV(\Om)$, we define the jump $\jump{v}_F\in L^1(F)$ and average of $\avg{v}_F\in L^1(F)$ for each $F\in\Fk$ by 
\begin{equation}\label{eq:jumpavg}
\begin{aligned}
  \avg{v}_F&\coloneqq \frac{1}{2}\left(\tau_{F}^+v+\tau_{F}^- v\right), & \jump{v}_F & \coloneqq \tau_{F}^-v -\tau_{F}^+v, &\forall F\in\Fki,\\
  \avg{v}_F &\coloneqq \tau_{F}^- v &  \jump{v}_F & \coloneqq \tau_F^{-} v & \forall F\in\Fkb.
\end{aligned}  
\end{equation}
The jump and average operators are further extended to vector fields in $BV(\Om;\R^\dim)$ component-wise.
Although the sign of $\jump{v}_F$ depends on the choice of $\bn_F$, in subsequent expressions the jumps will appear either under absolute value signs or in products with $\bn_F$, so that the overall resulting expression is uniquely defined and independent of the choice of $\bn_F$.
When no confusion is possible, we will often drop the subscripts and simply write $\avg{\cdot}$ and~$\jump{\cdot}$. 

\paragraph{Tangential derivatives.} For $F\in \Fk$ and a sufficiently regular function $w\colon F\mapsto \R$, let $\nabla_T w$ denote the tangential (surface) gradient of $w$, and let $\Delta_T w$ denote its the tangential Laplacian of $w$. We do not indicate the dependence on $F$ in order to alleviate the notation, as it will be clear from the context. Since all faces considered here are flat, these tangential differential operators commute with the trace operator for sufficiently regular functions, see~\cite{SS13} for further details.

\subsection{Finite element spaces.}\label{sec:fem_spaces_def}
For a nonnegative integer $p$, let $\Pp$ be the space of polynomials of total degree at most $p$.
In the following, let $p\geq 2$ denote a fixed choice of polynomial degree to be used for the finite element approximations.
We then define the finite element spaces $\Vk^s$, $s\in \{0,1\}$, by
\begin{equation}\label{sec:fem_spaces}
\begin{aligned}
  \Vk^ 0 &\coloneqq \{v\in L^2(\Om):v|_K\in \Pp\;\forall K\in\Tk\},
   &\Vk^1&\coloneqq  V_{k}^0 \cap H^1_0(\Om).
\end{aligned}  
\end{equation}
Therefore, the spaces $\Vk^0$ and $\Vk^1$ correspond to DG and $C^0$-IP spaces on $\Tk$, respectively. Clearly $\Vk^1$ is a subspace of $\Vk^0$.
 As mentioned above in section~\ref{sec:BV}, for any $v_k\in \Vk^s$, the piecewise gradient of $v_k$ over $\Tk$ coincides with $\Npw v_k$ the density of the absolutely continuous part of its distributional derivative $Dv_k$. Similarly, the piecewise Hessian of $v_k$ over $\Tk$ coincides with $\Dpw v_k$ the density of the absolutely continuous part of $D(\Npw v_k)$.

\paragraph{Norms.}
We equip the spaces $\Vk^s$ for each $s\in\{0,1\}$ with the same norm $\normk{\cdot}\colon \Vk^s\tends \R$ and jump seminorm $\absJ{\cdot}\colon \Vk^s\tends \R$ defined by
\begin{subequations}\label{eq:norm_def}
\begin{align}
\normk{v}^2\coloneqq \int_{\Om}\left[ \abs{\Dpw v}^2+\abs{\Npw v}^2 + \abs{v}^2\right] + \absJ{v}^2,
\\ \absJ{v}^2\coloneqq \int_{\cF_k^I}h_k^{-1}\abs{\jump{\Npw v}}^2+\int_{\Fk}h_k^{-3}\abs{\jump{v}}^2,
\end{align}
\end{subequations}
for all $v\in\Vk^s$.
Although  $\Vk^0$ and $\Vk^1$ are equipped with the same norm, we remark that for any $v\in\Vk^1$, the terms in~\eqref{eq:norm_def} involving the jumps $\jump{v}$ over mesh faces vanishes identically owing to $H^1_0$-conformity, whilst the terms involving the jumps $\jump{\Npw v}$ of first derivatives over internal mesh faces can be simplified to merely jumps of normal derivatives. However, to give a unified treatment of both cases $s=0$ and $s=1$, we will not make explicit use of these specific simplifications for the case $s=1$.

\paragraph{Lifting operators.} Let $q$ denote a fixed choice of polynomial degree such that $q\geq p-2$, which implies that $q\geq 0$ since $p\geq 2$. Let $\Vq\coloneqq \{w \in L^2(\Om)\colon w|_K \in \mathbb{P}_q \;\forall K \in\Tk\}$ denote the space of piecewise polynomials of degree at most $q$ over $\Tk$.
For each face $F\in\Fk$, the lifting operator $r_k^F\colon L^2(F)\tends \Vq$ is defined by $\int_\Om r_k^F(w) \varphi_k = \int_F w \{\varphi_k\}$ for all $\varphi_k \in \Vq$.
Using inverse inequalities for polynomials, it is easy to see that $\norm{r_k^F(w)}_\Om\lesssim h_F^{-1/2} \norm{w}_F$ for any $w\in L^2(F)$ and any $F\in\Fk$.
Next, for each $F\in \Fk$, we define $\br_k^F\colon L^2(F;\R^\dim)\tends [\Vq]^{\dim\times\dim} $, where $[\Vq]^{\dim\times\dim}$ denotes the space of $\dim\times\dim$-matrix valued functions that are component-wise in $\Vq$, as follows.
For all $\bm{w}\in L^2(F;\R^\dim)$ and all $i,\,j=1,\dots,\dim$, if  $F\in \Fki$ is an interior face, then let $[\br_k^F(\bm{w})]_{ij} \coloneqq r_k^F(\bm{w}_i \bn_j)$ where $\bn=\bn_F$ is the chosen unit normal for $F$.
Otherwise, if $F\in\Fkb$ is a boundary face then let $[\br_k^F(\bm{w})]_{ij} \coloneqq r_k^F((\bm{w}_T)_i \bn_j)$, where $\bm{w}_T=\bm{w}-(\bm{w}\cdot\bn)\bn$ denotes the tangential component of $\bm{w}$ on $F$.
In other words, on boundary faces, only the tangential component of~$\bm{w}$ is considered in the lifting $\br_k^F(\bm{w})$.
 It follows that, for any $\bvphi_k \in [\Vq]^{\dim\times\dim}$,
\begin{equation}\label{eq:lifting_identity}
\int_\Om \br_k^F(\bm{w}):\bvphi_k =
\begin{cases}
 \int_F (\bm{w}\otimes\bn):\{\bvphi_k\} = \int_F \bm{w}\cdot\{\bvphi_k \bn\} &\text{if }F\in\Fki,\\
 \int_F (\bm{w}_T\otimes\bn):\{\bvphi_k\}= \int_F \bm{w}_T\cdot\{\bvphi_k \bn\} &\text{if }F\in\Fkb.
 \end{cases}
\end{equation}
We then define the global lifting operator $\br_k$ and the lifted Hessian operator $\Hk$, which both map $\Vk^s$, $s\in\{0,1\}$, into $L^2(\Om;\R^{\dim\times\dim})$, as well as the lifted Laplacian operator~$\Delta_k$, by
\begin{align}\label{eq:lifted_Hessian}
\br_k \coloneqq \sum_{F\in\Fk}\br_k^F, \qquad \Hk v_k \coloneqq \Dpw v_k - \rk, \qquad \Delta_k v_k \coloneqq \Tr \Hk v_k,
\end{align}
where it is recalled that $\Tr \bm{M}$ is the matrix trace for any $\bm{M}\in\R^{\dim\times\dim}$.
The operators defined above then satisfy the following bounds
\begin{equation}
\begin{aligned}
\norm{\br_k(\jump{\Npw v_k})}_\Om\lesssim \absJ{v_k}, &&& \norm{\bm{H}_k v_k}_\Om + \norm{\Delta_k v_k}_\Om \lesssim \normk{v_k} &&& \!\!\forall v_k\in\Vk^s.
\end{aligned}
\end{equation}
Using~\eqref{eq:lifting_identity}, it is easy to see that $\Tr\br_k^F(\bm{w})=0 $ for any $\bm{w}\in L^2(F)$ and when~$F\in\Fkb$ is a boundary face, since $\Tr (\bm{w}_T\otimes\bn) = \bm{w}_T\cdot\bn =0$ as $\bm{w}_T$ is tangential to~$F$. Thus only interior face liftings contribute to $\Delta_k v_k$.

 \section{Variational formulation of the problem and adaptive finite element approximation}\label{sec:var_fem}

\subsection{Variational formulation of the problem}\label{sec:variational}

In order to focus on the most important aspects of analysis, we shall restrict our attention to Isaacs and HJB equations without lower order terms, although we note that the approach we consider here easily accommodates problems with lower order terms, see~\cite{KaweckiSmears20,SS14,SS16}.
More precisely, let the real valued functions $a_{ij}=a_{ji}$ and $f$ belong to $C(\overline{\Om}\times\sA\times\sB )$ for each $i,j=1,\ldots,\dim$.
For each $(\al,\be)\in\sA\times\sB$, we then define the matrix-valued function $a^\ab \colon \Om\tends \R^{\dim\times\dim}$ by $a^\ab_{ij}(x)=a_{ij}(x,\al,\be)$ for all $x\in \Om$ and $i,\,j=1,\dots,\dim$.
The functions $f^\ab$ are defined similarly for all $\al\in\sA$ and $\be\in\sB$.
Then, for each $\al\in\sA$ and $\be\in\sB$, the operators $L^\ab:H^2(\Om)\to L^2(\Om)$ are defined by 
\begin{equation}\label{eq:Lab_operators}\begin{aligned}
L^\ab v &= a^\ab {\colon}\nabla^2 v && \forall v\in H^2(\Om).
\end{aligned}\end{equation}
The nonlinear operator $\Fop\colon H^2(\Om)\tends L^2(\Om)$ is then defined as in~\eqref{eq:isaacs}. Note that the compactness of $\overline{\Om}\times\sA\times\sB$ and the continuity of the coefficients imply that $F$ is well-defined as a mapping from $H^2(\Om)$ to $L^2(\Om)$.
We consider the problem~\eqref{eq:isaacs} in its strong form, i.e.\ to find a solution $u\in H^2(\Om)\cap H^1_0(\Om)$ such that $F[u]=0$ pointwise a.e.\ in $\Om$.
We assume that the problem is uniformly elliptic, i.e.\ there exists positive constants $\underline{\nu}$ and $\overline{\nu}$ such that $\underline{\nu}\abs{\bm{v}}^2\leq \bm{v}^{\top}a^\ab(x) \bm{v} \leq \overline{\nu}\abs{\bm{v}}^2$ for all $\bm{v}\in \R^\dim$, for all $x\in \overline{\Om}$ and all $(\alpha,\beta)\in\sA\times\sB$, where $\abs{\bm{v}}$ denotes the Euclidean norm of $\bm{v}$.
Furthermore, we assume the Cordes condition: there exists a $\nu\in(0,1]$ such that
\begin{equation}\label{eq:Cordes}
\begin{aligned}
\frac{\abs{a^{\ab}(x)}^2}{\Tr(a^{\ab}(x))^2}\le\frac{1}{d-1+\nu}&&&\forall x \in \overline{\Om},\quad\forall(\alpha,\beta)\in\sA\times\sB,
\end{aligned}
\end{equation}
where $\abs{a^{\ab}}$ denotes the Frobenius norm of the matrix $a^\ab$.
It is well-known that if $\dim=2$, then uniform ellipticity implies the Cordes condition~\eqref{eq:Cordes}, see e.g.~\cite[Example~2]{SS14}.
In~\cite{SS14,SS16} and later in~\cite{KaweckiSmears20} it was shown that fully nonlinear HJB and Isaacs equations can be reformulated in terms of a renormalized nonlinear operator, as follows. 
For each $(\alpha,\beta)\in\sA\times\sB$, let $\gamma^{\ab}\in C(\overline{\Om})$ be defined by $\gamma^{\ab}\coloneqq \frac{\Tr a^{\ab} }{\abs{a^{\ab}}^2}$.
Let the renormalised operator~$\Fg \colon H^2(\Om)\tends L^2(\Om)$ be defined by
\begin{equation}\label{eq:Fg_def}
\begin{aligned}
\Fg[v]\coloneqq \inf_{\alpha\in\sA}\sup_{\beta\in\sB}\left[\gamma^{\ab}\left(L^\ab v - f^\ab\right)\right] &&&\forall v \in H^2(\Om).
\end{aligned}
\end{equation}
It is shown in~\cite{KaweckiSmears20}, see also~\cite{SS14}, that the renormalized operator $\Fg$ is Lipschitz continuous and satisfies the following bounds
\begin{subequations}\label{eq:cordes_ineq}
\begin{align}
\abs{\Fg[w] - \Fg[v] - \Delta (w-v) }&\leq \sqrt{1-\nu}\sqrt{|\nabla^2 z|^2+2\lambda|\nabla z|^2+\lambda^2|z|^2},\label{eq:cordes_ineq1}\\
|\Fg[w]-\Fg[v]|&\leq \big(1+\sqrt{d+1}\big)\sqrt{|\nabla^2z|^2+2\lambda|\nabla z|^2+\lambda^2|z|^2},\label{eq:cordes_ineq2}
\end{align}
\end{subequations}
for all functions $w$ and $v\in H^2(\om)$ for any open subset $\om\subset \Om$, where $z:=w-v$, and with the above bounds holding pointwise a.e.\ in $\om$.
The following Lemma from~\cite{KaweckiSmears20}, which extends earlier results from~\cite{SS14}, states that the equations $F[u]=0$ and $\Fg[u] =0$ have equivalent respective sets of sub- and supersolutions.
\begin{lemma}[\cite{KaweckiSmears20,SS14}]\label{lem:subsupersolutions}
A function $v\in H^2(\Om)$ satisfies $F[v]\leq 0$ pointwise a.e.\ in $\Omega$ if and only if $\Fg[v]\leq 0$ pointwise a.e.\ in~$\Omega$. Furthermore, a function $v\in H^2(\Om)$ satisfies $F[v]\geq 0$ pointwise a.e.\ in $\Omega$ if and only if $\Fg[v]\geq 0$ pointwise a.e.\ in~$\Omega$.
\end{lemma}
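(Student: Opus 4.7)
The plan is to reduce the equivalence to an elementary pointwise statement about infimum--supremum operations composed with multiplication by a positive, uniformly bounded factor. Since for $v \in H^2(\Om)$ the expressions $F[v]$, $\Fg[v]$, and $L^\ab v$ are all defined pointwise almost everywhere, it suffices to prove the equivalence pointwise at a.e.\ $x \in \Om$.

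First I would establish two-sided bounds $0 < \underline{\gamma} \leq \gamma^\ab(x) \leq \overline{\gamma} < \infty$ uniformly on $\overline{\Om}\times\sA\times\sB$. The upper bound follows from the Cauchy--Schwarz inequality $(\Tr a^\ab)^2 \leq \dim \abs{a^\ab}^2$ combined with uniform ellipticity $\Tr a^\ab \geq \dim \underline{\nu}$, giving $\gamma^\ab \leq 1/\underline{\nu}$. The lower bound follows from the uniform upper bound on $\abs{a^\ab}^2$ (by continuity of $a$ on the compact set $\overline{\Om}\times\sA\times\sB$) combined with $\Tr a^\ab \geq \dim\underline{\nu}$. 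Notice that these bounds come from uniform ellipticity alone and do not require the Cordes condition.

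For the subsolution case, fix $x \in \Om$ where $\nabla^2 v(x)$ is defined and set $g^\ab \coloneqq L^\ab v(x) - f^\ab(x)$. I would reduce to the abstract claim: $\inf_\alpha \sup_\beta g^\ab \leq 0$ if and only if $\inf_\alpha \sup_\beta \gamma^\ab(x) g^\ab \leq 0$. The key observation is the identity $(\gamma^\ab g^\ab)_+ = \gamma^\ab (g^\ab)_+$, valid because $\gamma^\ab > 0$, together with the elementary monotonicity-based commutation rules $(\sup_\beta h^\beta)_+ = \sup_\beta (h^\beta)_+$ and $(\inf_\alpha k^\alpha)_+ = \inf_\alpha (k^\alpha)_+$. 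These combine to give $(\inf_\alpha \sup_\beta g^\ab)_+ = \inf_\alpha \sup_\beta (g^\ab)_+$ and the analogous identity for the $\gamma$-weighted expression, after which the two-sided pointwise bound $\underline{\gamma} (g^\ab)_+ \leq \gamma^\ab (g^\ab)_+ \leq \overline{\gamma} (g^\ab)_+$ shows that both $\inf_\alpha\sup_\beta$ expressions vanish simultaneously, thereby establishing the subsolution equivalence.

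For the supersolution case the proof proceeds analogously, either by repeating the argument with the negative part $(\cdot)_-$ in place of $(\cdot)_+$, or by rewriting $\inf_\alpha \sup_\beta g^\ab \geq 0$ as $\sup_\alpha \inf_\beta (-g^\ab) \leq 0$ and invoking the companion commutation identities for $\sup_\alpha$ and $\inf_\beta$. I do not anticipate any serious obstacle: the whole argument is algebraic and relies only on the sign and two-sided bounds on $\gamma^\ab$ together with monotonicity of the positive part. The only minor care required is to verify the elementary commutation identities with inf and sup, and to note that both the upper and lower bounds on $\gamma^\ab$ are genuinely needed, one for each direction of the equivalence.
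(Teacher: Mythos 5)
Your proof is correct. The essential idea — reduce to a pointwise statement, observe $0<\underline{\gamma}\leq\gamma^{\alpha\beta}\leq\overline{\gamma}$ by uniform ellipticity and continuity on the compact parameter set (Cordes not needed here), and then exploit that multiplication by a positive bounded factor preserves the sign of the inf--sup — is the same one used in \cite{SS14} (HJB case) and \cite{KaweckiSmears20} (Isaacs case), which the lemma cites rather than proving directly. Your formalization via the commutation identities $(\sup_\beta h^\beta)_+=\sup_\beta(h^\beta)_+$ and $(\inf_\alpha k^\alpha)_+=\inf_\alpha(k^\alpha)_+$ is a clean and slightly more abstract packaging of the standard $\epsilon$-argument: one could equally argue that $\inf_\alpha\sup_\beta g^{\alpha\beta}\leq 0$ iff for every $\epsilon>0$ there is $\alpha$ with $g^{\alpha\beta}<\epsilon$ for all $\beta$, and then use $\underline{\gamma},\overline{\gamma}$ to rescale $\epsilon$ in each direction. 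Both routes need exactly the two-sided bound on $\gamma^{\alpha\beta}$, as you correctly observe. One small stylistic remark: your constants are right but the chain $\Tr a^{\alpha\beta}\geq d\underline{\nu}$ together with $\Tr a^{\alpha\beta}\leq\sqrt{d}\,\lvert a^{\alpha\beta}\rvert$ gives $\lvert a^{\alpha\beta}\rvert\geq\sqrt{d}\,\underline{\nu}$ and hence $\gamma^{\alpha\beta}\leq\sqrt{d}/\lvert a^{\alpha\beta}\rvert\leq 1/\underline{\nu}$; spelling out this intermediate step would make the upper bound fully explicit.
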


A particular consequence of Lemma~\ref{lem:subsupersolutions} is that a solution of $F[u]=0$ is equivalently a solution of $\Fg[u]=0$. Moreover, it is was shown in~\cite{SS14} for fully nonlinear HJB equations, and later for Isaacs equations in~\cite{KaweckiSmears20}, that under the above assumptions, there exists a unique strong solution of~\eqref{eq:isaacs}.

\begin{theorem}[\cite{KaweckiSmears20,SS14}]\label{thm:well_posedness}
There exists a unique $u\in H^2(\Om)\cap H^1_0(\Om)$ that solves $F[u]=0$ pointwise a.e.\ in $\Om$, and, equivalently, that solves $\Fg[u]=0$ pointwise a.e.\ in $\Om$.
\end{theorem}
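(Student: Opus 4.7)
By Lemma~\ref{lem:subsupersolutions}, a function $u \in H^2(\Om)\cap H^1_0(\Om)$ satisfies $\Fop[u]=0$ a.e.\ in $\Om$ if and only if it satisfies $\Fg[u]=0$ a.e.\ in $\Om$, so it suffices to prove existence and uniqueness of a strong solution to the renormalized equation $\Fg[u]=0$. The plan is to reformulate this as a nonlinear variational problem on $V\coloneqq H^2(\Om)\cap H^1_0(\Om)$ and then invoke the Browder--Minty/Zarantonello theorem for strongly monotone Lipschitz operators. Concretely, I would equip $V$ with the inner product $(u,v)\mapsto \int_\Om \Delta u\,\Delta v$; since $\Om$ is bounded and convex, the Miranda--Talenti identity gives $\norm{\Dpw v}_{\Om}\leq \norm{\Delta v}_{\Om}$ for all $v\in V$, so this inner product is equivalent to the standard $H^2$-norm on $V$. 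I would then define the form $a(u,v)\coloneqq \int_\Om \Fg[u]\,\Delta v$ for $u,v\in V$, and the associated nonlinear operator $A\colon V \to V^*$ by $\langle Au,v\rangle = a(u,v)$.

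The heart of the argument is to show that $A$ is strongly monotone and Lipschitz continuous. For Lipschitz continuity, I would use~\eqref{eq:cordes_ineq2} with $\lambda=0$, which immediately gives
\begin{equation*}
\abs{a(u,v)-a(w,v)} \leq (1+\sqrt{\dim+1})\,\norm{\Dpw(u-w)}_\Om\,\norm{\Delta v}_\Om.
\end{equation*}
For strong monotonicity, write $z\coloneqq u-w$ and decompose $\Fg[u]-\Fg[w] = \Delta z + g$, where by~\eqref{eq:cordes_ineq1} with $\lambda=0$ we have $\abs{g}\leq \sqrt{1-\nu}\,\abs{\Dpw z}$ pointwise a.e.\ in $\Om$. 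Then
\begin{equation*}
\langle Au-Aw,u-w\rangle = \int_\Om \abs{\Delta z}^2 + \int_\Om g\,\Delta z \geq \norm{\Delta z}_\Om^2 - \sqrt{1-\nu}\,\norm{\Dpw z}_\Om\,\norm{\Delta z}_\Om,
\end{equation*}
and by Miranda--Talenti this is bounded below by $(1-\sqrt{1-\nu})\,\norm{\Delta z}_\Om^2$. Since $\nu\in(0,1]$ gives $1-\sqrt{1-\nu}>0$, the operator $A$ is strongly monotone on $V$ with respect to the norm $\norm{\Delta \cdot}_\Om$.

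Applying the Zarantonello fixed-point theorem (equivalently Browder--Minty for strongly monotone Lipschitz operators on a Hilbert space) yields a unique $u\in V$ such that $Au = 0$, i.e.\ $\int_\Om \Fg[u]\,\Delta v = 0$ for every $v\in V$. To pass from this variational identity to the pointwise equation, I would invoke the surjectivity of the Laplacian $\Delta\colon V \to L^2(\Om)$, which again follows from convexity of $\Om$ via the classical $H^2$-regularity theory for the Poisson problem on convex polytopal domains: for every $\phi\in L^2(\Om)$ there exists $v\in V$ with $\Delta v = \phi$. Consequently $\int_\Om \Fg[u]\,\phi = 0$ for every $\phi\in L^2(\Om)$, so $\Fg[u]=0$ pointwise a.e.\ in $\Om$, and uniqueness in $V$ transfers directly from uniqueness of the Zarantonello fixed point. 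The main obstacle is the strong monotonicity estimate, which is precisely what the Cordes condition~\eqref{eq:Cordes} is designed to deliver through~\eqref{eq:cordes_ineq1}; without either the Cordes bound $\sqrt{1-\nu}<1$ or the Miranda--Talenti inequality, the monotonicity argument would fail.
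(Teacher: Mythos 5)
Your proposal is correct and follows essentially the same route as the paper (which itself only sketches the argument and defers to \cite{SS14,KaweckiSmears20}): reduce to $\Fg[u]=0$ via Lemma~\ref{lem:subsupersolutions}, reformulate variationally as $A(u;v)=\int_\Om \Fg[u]\Delta v$ on $H^2(\Om)\cap H^1_0(\Om)$, establish Lipschitz continuity from~\eqref{eq:cordes_ineq2} and strong monotonicity from~\eqref{eq:cordes_ineq1} together with the Miranda--Talenti inequality on the convex domain, apply Browder--Minty, and recover the pointwise equation from bijectivity of the Laplacian. The one thing worth noting is that your strong monotonicity constant $1-\sqrt{1-\nu}$ is exactly the one obtained in \cite{SS14}, confirming the argument closes.
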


In particular, the proof, due to~\cite{SS14}, involves reformulating the equation $F[u]=0$ in terms of a strongly monotone nonlinear operator equation of the form $A(u;v)=0$ for all $v\in H^2(\Om)\cap H^1_0(\Om)$, where
\begin{equation}
\begin{aligned}
A(u;v)\coloneqq \int_\Om \Fg[u]\Delta v  &&&\forall v\in H^2(\Om)\cap H^1_0(\Om).
\end{aligned}
\end{equation}
Note that the equivalence of these formulations is a consequence of the bijectivity of the Laplace operator from $H^2(\Om)\cap H^1_0(\Om)$ to $L^2(\Om)$ on the convex domain $\Omega$.
It is then shown in~\cite{SS14,KaweckiSmears20} that $A(\cdot;\cdot)$ is Lipschitz continuous, and also strongly monotone on the space $H^2(\Om)\cap H^1_0(\Om)$, i.e.\
\begin{equation}\label{eq:continuous_strong_monotonicity}
\begin{aligned}
\frac{1}{c_{\star}}\norm{w-v}_{H^2(\Om)}^2 \leq A(w;w-v)-A(v;w-v) &&&\forall w,\, v \in H^2(\Om)\cap H^1_0(\Om),
\end{aligned}
\end{equation}
where $c_\star$ in particular depends only on $\dim$, $\diam\Om$ and $\nu$ from~\eqref{eq:Cordes}.
Therefore, the existence and uniqueness of a strong solution $u$ follows from the Browder--Minty theorem.

\subsection{Numerical discretizations and error estimators}\label{sec:num_schemes}

For each $k\in\N$, let the bilinear form $\Skf\colon \Vk^0\times\Vk^0\tends \R$ be defined by
\begin{equation}\label{eq:B_def}
\begin{split}
\Skf(w_k,v_k)\coloneqq &\int_\Om \left[\Dpw w_k:\Dpw v_k - \Delta w_k \Delta v_k\right]
\\ & + \int_{\Fki} \left[\avg{\Delta_T w_k} \jump{\Npw v_k\cdot \bn} + \avg{\Delta_T v_k} \jump{\Npw w_k\cdot \bn} \right] \\
 &- \int_{\Fk} \left[ \Npw_T\avg{\Npw w_k\cdot \bn} \cdot \jump{\Npw_T v_k} + \Npw_T\avg{\Npw v_k\cdot \bn} \cdot \jump{\Npw_T w_k} \right],
\end{split}
\end{equation}
for all  $w_k,\,v_k\in\Vk^0$.
The bilinear form $\Skf(\cdot,\cdot)$ represents a stabilization term in the numerical schemes defined below.
For two positive constant parameters $\sigma$ and $\rho$ to be chosen sufficiently large, let the jump penalisation bilinear form $\Jpen\colon \Vk^0\times\Vk^0\tends \R$ be defined by
\begin{equation}\label{eq:jump_pen_def}
\begin{aligned}
\Jpen(w_k,v_k)\coloneqq \int_{\Fki} \sigma h_k^{-1} \jump{\Npw w_k}\cdot\jump{\Npw v_k} + \int_{\Fk} \rho h_k^{-3}\jump{w_k}\jump{v_k},
\end{aligned}
\end{equation}
for all $w_k,\,v_k\in\Vk^0$.
For a parameter $\theta\in[0,1]$, let the nonlinear form $A_k\colon \Vk^0\times\Vk^0\tends \R$ be defined by
\begin{equation}\label{eq:nonlinear_form}
\begin{aligned}
A_k(w_k;v_k)\coloneqq \int_\Om \Fg[w_k]\Delta_k v_k + \theta \Skf(w_k,v_k) + \Jpen(w_k,v_k),
\end{aligned}
\end{equation}
for all functions $w_k,\,v_k \in \Vk^0$, where we recall that the lifted Laplacian $\Delta_k v_k$ appearing in the first integral on the right-hand side of \eqref{eq:nonlinear_form} is defined in~\eqref{eq:lifted_Hessian}.
The nonlinear form $A_k$ is nonlinear in its first argument, but linear in its second argument.
For a fixed choice of $s\in\{0,1\}$, the numerical scheme is then to find $u_k\in\Vk^s$ such that
\begin{equation}\label{eq:num_scheme}
\begin{aligned}
A_k(u_k;v_k) = 0 &&&\forall v_k\in\Vk^s.
\end{aligned}
\end{equation}
Since $s\in\{0,1\}$ is fixed, we omit the dependence of $u_k$ on $s$ in the notation, as there is no risk of confusion.
The choice $\theta=1/2$ is based on the method of~\cite{SS13,SS14,SS16}, with the modification that the nonlinear operator is tested against the lifted Laplacian rather than the piecewise Laplacian of test functions. The choice $\theta=0$ and $s=1$ is similar to the method of \cite{NeilanWu19}, again modulo the introduction of the lifted Laplacians for the first integral term. 
The lifted Laplacians will play a role later on in the proof of asymptotic consistency of the nonlinear forms $A_k(\cdot;\cdot)$.

\begin{remark}[Simplifications for $C^0$-IP methods]
Note that when considering the restriction of $\Jpen(\cdot,\cdot)$ to $\Vk^1\times \Vk^1$, the last term on the right-hand side of~\eqref{eq:jump_pen_def} vanishes identically, and we can take $\rho=0$. Furthermore, since the jumps of gradients of functions in $\Vk^1$ have vanishing tangential components over the faces of the mesh, the first term in the right-hand side of \eqref{eq:jump_pen_def} can be further simplified to just the jumps in the normal components of the gradient. These simplifications can be useful in computational practice but we retain the general form above in order to present a unified analysis for both DG and $C^0$-IP methods.
\end{remark}

 We recall now some basic properties of the numerical scheme that have been shown in previous works, see in particular \cite{KaweckiSmears20} for a complete treatment.
Building on the analysis in~\cite{SS13,SS14}, it was shown in~\cite{KaweckiSmears20} that the parameters $\sigma$ and $\rho$ can be chosen sufficiently large such that $A_k$ is strongly monotone with respect to~$\normk{\cdot}$, i.e.\ such that there is a fixed constant $\cmon>0$ independent of $k$, such that
\begin{equation}\label{eq:strong_monotonicity}
\begin{aligned}
\frac{1}{\cmon}\normk{w_k - v_k}^2 \leq  A_k(w_k;w_k-v_k)-A_k(v_k,w_k-v_k) &&&\forall w_k,\,v_k\in\Vk^s, \;\forall k\in\N.
\end{aligned}
\end{equation}
It is also straightforward to show from standard techniques along with~\eqref{eq:cordes_ineq2} that the nonlinear form $A_k$ is Lipschitz continuous, i.e.\ there exists a positive constant $C_{\mathrm{Lip}}$, independent of $k$, such that
\begin{equation}\label{eq:Ak_lipschitz}
\begin{aligned}
\abs{A_k(w_k;v_k)-A_k(z_k;v_k)}\leq C_{\mathrm{Lip}} \normk{w_k-z_k}\normk{v_k}&&&\forall w_k,\,z_k,\,v_k\in\Vk^0,\;\forall k\in\N.
\end{aligned}
\end{equation}
It then follows from the Browder--Minty theorem that there exists a unique solution $u_k\in\Vk^s$ of~\eqref{eq:num_scheme} for each $k\in\N$. We refer the reader to~\cite{KaweckiSmears20} for a detailed discussion of the dependencies of the constants.
The strong monotonicity bound~\eqref{eq:strong_monotonicity}, the boundedness of the data, and the Lipschitz continuity~\eqref{eq:Ak_lipschitz} also imply the boundedness of the sequence of numerical solutions, i.e.\ 
\begin{equation}\label{eq:num_sol_bounded}
\sup_{k\in\N}\normk{u_k} < \infty.
\end{equation}
Furthermore, it follows from~\cite[Theorem~4.3]{KaweckiSmears20} that the numerical solution $u_k$ is a quasi-optimal approximation of $u$, i.e.\, up to a constant, the error attained by $u_k$ is equivalent to the best approximation error of $u$ from the space $\Vk^s$.

\paragraph{Analysis of stabilization terms.}
We collect here two results that will be used later in the analysis.
First, we note that the bilinear form $\Skf(\cdot,\cdot)$ defined in~\eqref{eq:B_def} constitutes a stabilization term, and is consistent with the original problem, see~\cite[Lemma~5]{SS13}. 
We will also use the following theorem from~\cite[Theorem~5.3]{KaweckiSmears20}, which improves on~\cite{SS13}, provides a quantitative bound for possibly nonsmooth functions in $\Vk^s$.

\begin{theorem}[\cite{KaweckiSmears20}]\label{thm:b_k_jump_bound}
The bilinear form $\Skf(\cdot,\cdot)$ satisfies
\begin{equation}
\begin{aligned}
\abs{\Skf(w_k,v_k)} \lesssim \absJ{w_k}\absJ{v_k} &&& \forall w_k,\,v_k\in \Vk^s, \;\forall s\in\{0,1\}.
\end{aligned}
\end{equation}
\end{theorem}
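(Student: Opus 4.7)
The plan is to reduce $S_k(w_k,v_k)$ to an expression whose integrand is manifestly a sum of products of jumps, by combining element-wise integration by parts on the volume term with the face integrals in the definition~\eqref{eq:B_def}, and then to close the estimate by Cauchy--Schwarz with polynomial trace and inverse inequalities.

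The starting point is the pointwise identity $\nabla^2 w :\nabla^2 v - \Delta w\,\Delta v = \partial_i\big(\partial_j w\,\partial_{ij}v - \partial_i w\,\Delta v\big)$, so by the divergence theorem and the flatness of each face of a simplex (which gives $\Delta v=\partial_{n_K}^2 v+\Delta_T v$ on $\p K$ and $\nabla w = \partial_{n_K} w\,\bn_K + \nabla_T w$), one finds
\begin{equation*}
\int_K\big(\nabla^2 w :\nabla^2 v - \Delta w\,\Delta v\big)=\int_{\p K}\big[-\partial_{n_K} w\,\Delta_T v + \nabla_T w\cdot\nabla_T\partial_{n_K} v\big].
\end{equation*}
Summing over $K\in\Tk$ and expanding the interior-face traces with $\jump{ab}=\{a\}\jump{b}+\jump{a}\{b\}$, and comparing with the explicit face integrals in $\Skf$, one verifies the exact pairwise cancellations $\jump{\partial_n w}\{\Delta_T v\}\leftrightarrow\{\Delta_T v\}\jump{\partial_n w}$ and $\nabla_T\{\partial_n v\}\cdot\jump{\nabla_T w}\leftrightarrow\jump{\nabla_T w}\cdot\nabla_T\{\partial_n v\}$ (and their $w\leftrightarrow v$ counterparts).

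On each interior face there remain integrands of the form $-\{\partial_n w\}\jump{\Delta_T v}+\{\Delta_T w\}\jump{\partial_n v}+\nabla_T\jump{\partial_n v}\cdot\{\nabla_T w\}-\nabla_T\{\partial_n w\}\cdot\jump{\nabla_T v}$, which, after a further integration by parts on each flat face using $\int_F g\,\mathrm{div}_T\bm\phi = -\int_F\nabla_T g\cdot\bm\phi + \int_{\partial F}g\,\bm\phi\cdot\bm\tau$, reduce to jump-jump combinations (up to boundary-of-face terms that are either controlled by resumming over faces meeting at a common edge, or vanish due to the chosen conventions on $\Fkb$). An equivalent and more structural way to organise this computation is to use the lifting identity~\eqref{eq:lifting_identity} with test fields $\nabla^2 w_k,\nabla^2 v_k\in[\Vq]^{\dim\times\dim}$ (allowed since $q\ge p-2$) to arrive at the closed-form identity
\begin{equation*}
\Skf(w_k,v_k)=\int_\Om\big[\Hk w_k:\Hk v_k - \Delta_k w_k\,\Delta_k v_k\big] - \int_\Om\big[\br_k(\jump{\Dp w_k})\!:\!\br_k(\jump{\Dp v_k}) - \Tr\br_k(\jump{\Dp w_k})\Tr\br_k(\jump{\Dp v_k})\big],
\end{equation*}
where the second integral is estimated directly by $\|\br_k(\jump{\Dp w_k})\|_\Om\|\br_k(\jump{\Dp v_k})\|_\Om\lesssim |w_k|_{J,k}|v_k|_{J,k}$.

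Combining Cauchy--Schwarz with the polynomial trace inequality $\|p\|_{L^2(F)}^2\lesssim h_K^{-1}\|p\|_{L^2(K)}^2$ and standard inverse inequalities absorbs precisely the $h_k^{-1}$ and $h_k^{-3}$ weights present in $|\cdot|_{J,k}$, yielding $|\Skf(w_k,v_k)|\lesssim |w_k|_{J,k}|v_k|_{J,k}$. The main obstacle is the careful bookkeeping of the cancellations in the second step: every residual face or edge term must be shown to contain at least one jump factor of $w_k$ and one of $v_k$, which requires exploiting the precise structure of the averaged quantities $\{\Delta_T\cdot\}$, $\nabla_T\{\partial_n\cdot\}$ appearing in $\Skf$; handling the boundary faces (where $\jump{\cdot}=\{\cdot\}=\tau^-(\cdot)$) and the edge-of-face contributions in three dimensions is the most delicate part of this bookkeeping.
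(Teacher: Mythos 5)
The paper does not itself prove Theorem~\ref{thm:b_k_jump_bound}: it is cited directly from~\cite[Theorem~5.3]{KaweckiSmears20}, and the only related argument given here is the proof of Lemma~\ref{lem:lifted_Bk_form}, which rewrites $\Skf$ in lifted form (deferring the cancellation to~\cite[Lemma~5]{SS13}) for the purpose of proving asymptotic consistency, not the jump bound.

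Your proposal has a genuine gap. In the route you call ``more structural,'' you correctly reproduce the identity of Lemma~\ref{lem:lifted_Bk_form},
\begin{equation*}
\Skf(w_k,v_k)=\int_\Om\big[\Hk w_k{:}\Hk v_k - \Delta_k w_k\,\Delta_k v_k\big] + \int_\Om\big[\Tr\br_k(\jump{\Npw w_k})\Tr\br_k(\jump{\Npw v_k}) - \br_k(\jump{\Npw w_k}){:}\br_k(\jump{\Npw v_k})\big],
\end{equation*}
and then estimate only the second integral by $\absJ{w_k}\absJ{v_k}$, leaving the first integral $\int_\Om[\Hk w_k{:}\Hk v_k-\Delta_k w_k\Delta_k v_k]$ entirely unaddressed. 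This term involves the full piecewise Hessians and lifted Hessians, not merely jumps, and it is not a priori controlled by the jump seminorms: showing that it is so controlled is precisely as hard as the original statement. (Indeed, once Theorem~\ref{thm:b_k_jump_bound} is known one can back out that this integral \emph{is} bounded by $\absJ{w_k}\absJ{v_k}$, but using this would be circular.) Your elementary route is closer in spirit to the actual argument, but it also does not close: after you integrate by parts on the face and use the commutation of tangential operators with traces, the four surviving face integrands combine exactly into tangential divergences $-\mathrm{div}_T(\avg{\partial_n w}\nabla_T\jump{v})$ and $\mathrm{div}_T(\jump{\partial_n v}\nabla_T\avg{w})$, so the face integrals vanish entirely and \emph{only} edge (codimension-two) contributions survive --- not the ``jump-jump combinations'' your sketch asserts. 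Those edge terms pair an \emph{average} of one argument with a jump-derivative of the other, so a direct Cauchy--Schwarz yields at best a bound involving $\normk{w_k}$ rather than $\absJ{w_k}$. Establishing the claimed symmetry in $\absJ{\cdot}$ requires showing that these edge contributions cancel or reorganize across all faces meeting at a common edge, which is the entire content of the theorem; you explicitly acknowledge this as ``the most delicate part'' but do not carry it out.
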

When it comes to the analysis of asymptotic consistency of the numerical schemes, it is advantageous to write the face terms in bilinear form $\Skf(\cdot,\cdot)$ via the lifting operators defined in~Section~\ref{sec:fem_spaces_def}.
\begin{lemma}\label{lem:lifted_Bk_form}
For all $v_k,w_k\in V_k^s$, $s\in\{0,1\}$, there holds
\begin{equation}\label{eq:lifted_Bk_form}
\begin{aligned}
\Skf(w_k,v_k) &= \int_\Om \left[\Hk w_k {:} \Hk v_k - \Delta_k w_k \Delta_k v_k \right]
\\ &  + \int_\Om \left[ \Tr \br_k(\jump{\Npw w_k}) \Tr \br_k(\jump{\Npw v_k}) - \br_k(\jump{\Npw w_k}){:}\br_k(\jump{\Npw v_k}) \right].
\end{aligned}
\end{equation}
\end{lemma}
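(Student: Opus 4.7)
The plan is to prove the identity by a direct algebraic expansion of the right-hand side of \eqref{eq:lifted_Bk_form}, followed by an application of the lifting identity \eqref{eq:lifting_identity} face-by-face to convert the cross terms into the face integrals appearing in \eqref{eq:B_def}.

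First, I would substitute the definitions $\Hk v_k = \Dpw v_k - \br_k(\jump{\Npw v_k})$ and $\Delta_k v_k = \Delta v_k - \Tr \br_k(\jump{\Npw v_k})$ into the right-hand side of \eqref{eq:lifted_Bk_form} and expand. The quadratic terms in $\br_k$ produced by $\Hk w_k {:}\Hk v_k - \Delta_k w_k \Delta_k v_k$ cancel exactly with the second line of \eqref{eq:lifted_Bk_form}. The leading term reduces to $\int_\Om[\Dpw w_k{:}\Dpw v_k - \Delta w_k \Delta v_k]$, matching the volume integral in \eqref{eq:B_def}. The remaining cross terms are
\begin{equation*}
-\int_\Om \Dpw w_k {:} \br_k(\jump{\Npw v_k}) + \int_\Om \Delta w_k \Tr \br_k(\jump{\Npw v_k})
\end{equation*}
plus its symmetric counterpart with $w_k$ and $v_k$ interchanged. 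It then suffices to show that the first expression above equals the sum of the $w_k$-$v_k$ interior face term $\int_{\Fki} \avg{\Delta_T w_k}\jump{\Npw v_k\cdot\bn}$ and the full-face term $-\int_{\Fk} \Npw_T \avg{\Npw w_k\cdot\bn}\cdot \jump{\Npw_T v_k}$ in \eqref{eq:B_def}.

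To establish this, I would split the sum $\br_k = \sum_{F\in\Fk}\br_k^F$ and apply \eqref{eq:lifting_identity} face by face. Since $p\geq 2$ and $q\geq p-2$, we have $\Dpw w_k \in [\Vq]^{\dim\times\dim}$ and $\Delta w_k \in \Vq$, so these are admissible test functions. For an interior face $F\in\Fki$, the lifting identity gives $\int_\Om \Dpw w_k {:} \br_k^F(\jump{\Npw v_k}) = \int_F \jump{\Npw v_k}\cdot \avg{\Dpw w_k\, \bn}$ and $\int_\Om \Delta w_k \Tr \br_k^F(\jump{\Npw v_k}) = \int_F (\jump{\Npw v_k}\cdot\bn)\avg{\Delta w_k}$, using that $\Tr \br_k^F(\bm{w}) = r_k^F(\bm{w}\cdot\bn)$. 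Now I would exploit the flatness of $F$ to orthogonally decompose $\jump{\Npw v_k} = \jump{\Npw v_k\cdot\bn}\bn + \jump{\Npw_T v_k}$ and $\Dpw w_k\,\bn = (\p_n^2 w_k)\bn + \Npw_T(\Npw w_k\cdot\bn)$, together with $\Delta w_k = \Delta_T w_k + \p_n^2 w_k$. Substituting and using that the cross normal-tangential terms vanish by orthogonality, the $\avg{\p_n^2 w_k}\jump{\Npw v_k\cdot\bn}$ contributions cancel, leaving exactly the face terms claimed.

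For boundary faces $F\in\Fkb$, the lifting identity yields only a tangential contribution $\int_F \jump{\Npw_T v_k}\cdot \avg{\Dpw w_k\,\bn}$, and $\Tr \br_k^F$ vanishes identically by the remark just after \eqref{eq:lifted_Hessian}; this matches the fact that the first face-integral in \eqref{eq:B_def} is only over $\Fki$, while the $\Npw_T \avg{\Npw w_k\cdot\bn}\cdot\jump{\Npw_T v_k}$ term is over all of $\Fk$. Applying the symmetric argument with the roles of $w_k$ and $v_k$ interchanged yields the remaining two face terms of \eqref{eq:B_def} and completes the identification. The main obstacle here is purely bookkeeping: one must carefully carry out the orthogonal decomposition of $\Dpw w_k\,\bn$ on each face and recognise that $\avg{\cdot}$ commutes with the tangential operator $\Npw_T$ (and with $\Delta_T$), so that $\avg{\Npw_T(\Npw w_k\cdot\bn)} = \Npw_T \avg{\Npw w_k\cdot\bn}$, which is what allows matching with the form of the face terms as written in \eqref{eq:B_def}.
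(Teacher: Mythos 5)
Your proof is correct and takes essentially the same route as the paper. Both expand the right-hand side of \eqref{eq:lifted_Bk_form}, observe that the quadratic lifting terms cancel, and reduce the identity to showing that
$\int_\Om \big[\Dpw w_k : \br_k(\jump{\Npw v_k}) - \Delta w_k \Tr\br_k(\jump{\Npw v_k})\big]$ equals
$\int_{\Fk}\Npw_T\avg{\Npw w_k\cdot\bn}\cdot\jump{\Npw_T v_k} - \int_{\Fki}\avg{\Delta_T w_k}\jump{\Npw v_k\cdot\bn}$
(together with the same statement with $w_k$ and $v_k$ interchanged), which is exactly the identity \eqref{eq:lifted_Bk_form_1}. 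The paper handles the face-by-face cancellation by citing~\cite[Lemma~5]{SS13}, whereas you spell it out explicitly via the normal--tangential orthogonal decomposition of $\jump{\Npw v_k}$ and $\Dpw w_k\,\bn$, the identity $\Delta w_k = \p_n^2 w_k + \Delta_T w_k$, and the commutation of $\Npw_T$ with $\avg{\cdot}$; your treatment of boundary faces (only the tangential part is lifted and $\Tr\br_k^F$ vanishes) also matches the paper's asymmetric integration domains. This is a faithful, more self-contained version of the paper's own argument rather than a genuinely different route.
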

\begin{proof} 
Using the identity~\eqref{eq:lifting_identity}, simple algebraic manipulations show that, for any $w_k$ and $v_k \in\Vk^s$,
\begin{multline}\label{eq:lifted_Bk_form_1}
\int_\Om \Dpw v_k : \br_k(\jump{\Dp w_k}) -\Delta v_k \Tr\br_k(\jump{\Npw w_k})\\  = \int_{\Fki} \left[\avg{\Dpw v_k}:(\jump{\Npw w_k}\otimes\bn) - \avg{\Delta v_k}\jump{\Npw w_k \cdot \bn}\right] + \int_{\Fkb} \{\Dpw v_k\}:(\jump{ \Npw_T w_k}\otimes\bn)
\\ = \int_{\Fk} \Npw_T\avg{\Npw v_k\cdot \bn} \cdot \jump{\Npw_T w_k} -\int_{\Fki} \avg{\Delta_T v_k} \jump{\Npw w_k\cdot \bn},
\end{multline}
where the second identity is obtained by cancelling terms exactly as in the proof of~\cite[Lemma~5]{SS13}. Note that it is possible to interchange $w_k$ and $v_k$ in~\eqref{eq:lifted_Bk_form_1}. The identity~\eqref{eq:lifted_Bk_form} is then obtained by expanding all terms in its right-hand side and simplifying with the help of~\eqref{eq:lifted_Bk_form_1}.
\Fqed\end{proof}

Theorem~\ref{thm:b_k_jump_bound} will be used later in the proof of convergence of the adaptive methods.

\paragraph{Reliable and efficient a posteriori error estimator.}
For each $k\in\N$ and any $v_k\in\Vk^s$, we define the element-wise error estimators $\ek(v_k,K)$ for each $K\in\Tk$, and total error estimator $\ek(v_k)$, by 
\begin{subequations}\label{eq:estimator_def}
\begin{align}
\left[\ek(v_k,K)\right]^2 &\coloneqq \int_K \abs{\Fg[v_k]}^2 + \sum_{F\in\Fki;F\subset \p K} \int_F \delta_F h_k^{-1}\abs{\jump{\Npw v_k}}^2 +\sum_{F\in\Fk; F\subset \p K}\int_{F} \delta_F h_k^{-3} \abs{\jump{v_k}}^2 ,
\\ [\ek(v_k)]^2&\coloneqq\sum_{K\in\Tk}[\ek(v_k,K)]^2,
\end{align}
\end{subequations}
where the weight $\delta_F=1/2$ if $F\in\Fk^I$ and otherwise $\delta_F=1$ for $F\in\Fk^B$.
The reliability and local efficiency of the above error estimators is shown in~\cite{KaweckiSmears20}, see also related results in~\cite{Kawecki19c,Bleschmidt19}. 
In particular,~\cite[Theorem~4.2]{KaweckiSmears20} shows that there exists a constant $C_{\mathrm{rel}}>0$, independent of $k\in\N$, such that
\begin{equation}
\normk{u-v_k} \leq C_{\mathrm{rel}} \ek(v_k)\quad \forall v_k\in\Vk^s,\;\forall s\in\{0,1\},\;\forall k\in\N.
\end{equation}
Note that the reliability bound indeed holds for all functions from the approximation space and not only the numerical solution $u_k\in\Vk^s$; this results primarily from the fact that $u$ is a strong solution of the problem.  
Furthermore, the estimators are locally efficent, in particular, there is a constant $C_{\mathrm{eff}}>0$ independent of $k$, such that 
\begin{multline}
\frac{1}{C^2_{\mathrm{eff}}}[\ek(v_k;K)]^2 \\ \leq  \int_K \abs{\Dpw(u-v_k)}^2 + \sum_{\substack{F\in\Fki\\ F\subset \p K}} \int_F \delta_F h_k^{-1}\abs{\jump{\Npw v_k}}^2  + \sum_{\substack{F\in\Fk\\ F\subset \p K}}\int_{F}\delta_F h_k^{-3} \abs{\jump{v_k}}^2,
\end{multline}
for all $v_k\in\Vk^s$.
This implies the global efficiency bound
\begin{equation}\label{eq:global_efficiency}
\ek(v_k) \leq C_{\mathrm{eff}}\normk{u-v_k} \quad \forall v_k\in\Vk^s,\;\forall s\in\{0,1\}.
\end{equation}
For further analysis of the dependencies of the constants $C_{\mathrm{rel}}$ and $C_{\mathrm{eff}}$ we refer the reader to~\cite{KaweckiSmears20}. Note that the error estimators do not feature any positive power weight of the mesh-size in the residual terms, which is an issue for the reduction property typically used in the analysis of convergence rates of adaptive algorithms.

\subsection{Adaptive algorithm and main result}\label{sec:main_result}
We now state precisely the adaptive algorithm. Consider a fixed choice of $s\in\{0,1\}$, with $s=0$ corresponding to the DG method, and $s=1$ corresponding to the $C^0$-IP method, and consider fixed integers $p$ and $q$ such that $p\geq 2$ and $q\geq p-2$.
Given an initial mesh $\cT_1$, the algorithm produces the sequence of meshes $\{\Tk\}_{k\in\N} $ and numerical solutions $u_k\in\Vk^s$ by looping over the following steps for each $k\in\N$.
\begin{enumerate}
\item \emph{Solve.} Solve the discrete problem~\eqref{eq:num_scheme} to obtain the discrete solution $u_k\in\Vk^s$.
\item \emph{Estimate.} Compute the estimators $\{\ek(u_k,K)\}_{K\in\Tk}$ defined by~\eqref{eq:estimator_def}.
\item \emph{Mark.} Choose a subset of elements $\mathcal{M}_k\subset \Tk$ marked for refinement, such that 
\begin{equation}\label{eq:max_marking}
\max_{K\in\Tk}\ek(u_k,K) = \max_{K\in \Mk}\ek(u_k,K).
\end{equation}
\item \emph{Refine.} Construct a conforming simplicial refinement $\cT_{k+1}$ from $\Tk$ such that every element of $\Mk$ is refined, i.e.\  $K \in \Tk\setminus \cT_{k+1}$ for all $K\in\Mk$.
\end{enumerate}

The marking condition~\eqref{eq:max_marking} is rather general and can be combined with additional conditions on the marked set such as those used in maximum and bulk-chasing strategies.
Since~\eqref{eq:max_marking} is sufficient for the proof of convergence of the adaptive method, we do not specify further conditions on the marking strategy and instead allow for any marking strategy that satisfies~\eqref{eq:max_marking}.
Recall also that the refinement routine is assumed to satisfy the conditions of \emph{quasi-regular subdivisions} of~\cite{MorinSiebertVeeser08}.

\paragraph{Main result.}
The main result of this work states that the sequence of numerical approximations generated by the adaptive algorithm converges to the solution of \eqref{eq:isaacs} and that the estimators vanish in the limit.

\begin{theorem}\label{thm:main}
The sequence of numerical solutions $\{u_k\}_{k\in\N}$ converges to the solution $u$ of~\eqref{eq:isaacs} with
\begin{equation}\label{eq:main}
\begin{aligned}
\lim_{k\tends\infty}\normk{u-u_k} =0, &&& \lim_{k\tends\infty} \ek(u_k)= 0.
\end{aligned}
\end{equation}
\end{theorem}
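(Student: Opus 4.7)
The plan is to follow the monotone-operator framework outlined in the introduction: extract a weak limit of the numerical solutions in the limit space $V_\infty$, identify this limit as the exact solution via a limit variational problem, and then upgrade weak convergence to strong convergence using the uniform strong monotonicity of $A_k$. The vanishing of the estimators is then a consequence of the global efficiency bound \eqref{eq:global_efficiency}.

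\textbf{Step 1 (weak precompactness and a limit problem).} The sequence $\{u_k\}$ is bounded in $\normk{\cdot}$ by \eqref{eq:num_sol_bounded}. Using the weak compactness results for bounded sequences of DG/$C^0$-IP functions developed in Section~\ref{sec:limspace} via the intrinsic characterization of $V_\infty$, I would extract a subsequence (not relabeled) converging weakly in an appropriate sense to some $u_\infty\in V_\infty$. In parallel, I would define a limit nonlinear form $A_\infty$ on $V_\infty$ that extends $A_k$ to the limit space, using the distributional Hessians together with the jump and average operators on the limit skeleton. The uniform strong monotonicity \eqref{eq:strong_monotonicity} and Lipschitz continuity \eqref{eq:Ak_lipschitz} should be inherited by $A_\infty$ via a lower/upper semicontinuity argument under the weak/strong convergences available on $V_\infty$.

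\textbf{Step 2 (identification of $u_\infty=u$).} For any test function $v_\infty\in V_\infty$, the density and approximation results from Section~\ref{sec:limspace} furnish a sequence $v_k\in V_k^s$ converging strongly to $v_\infty$. I would then prove the asymptotic consistency $A_k(u_k;v_k)\to A_\infty(u_\infty;v_\infty)$, handling each ingredient of \eqref{eq:nonlinear_form} separately: strong convergence of lifted Hessians and lifted Laplacians applied to the test sequence, the Lipschitz bound \eqref{eq:cordes_ineq2} combined with weak convergence of $\Hk u_k$ for the nonlinear integral, Lemma~\ref{lem:lifted_Bk_form} to pass $S_k$ to the limit through the lifted operators, and the jump penalty by weak lower semicontinuity. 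Since $A_k(u_k;v_k)=0$ for all $v_k\in V_k^s$, we obtain $A_\infty(u_\infty;v_\infty)=0$. Because $H^2(\Om)\cap H^1_0(\Om)\subset V_\infty$ and the limit form agrees with the continuous form $A(\cdot;\cdot)$ on this subspace, $u$ is also a solution of the limit problem, and strong monotonicity of $A_\infty$ forces $u_\infty=u$. By uniqueness of the limit, the whole sequence $u_k$ converges weakly to $u$.

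\textbf{Step 3 (strong convergence and estimator convergence).} To upgrade to norm convergence, let $w_k\in V_k^s$ be a sequence obtained by approximating $u$ on $\Tk$ via a quasi-interpolant; the maximum marking condition \eqref{eq:max_marking}, together with local efficiency of $\ek$, plays its role here by guaranteeing that elements carrying nontrivial approximation error are refined infinitely often, so that $\normk{u-w_k}\to 0$. Using Galerkin orthogonality $A_k(u_k;u_k-w_k)=0$, strong monotonicity \eqref{eq:strong_monotonicity}, and Lipschitz continuity \eqref{eq:Ak_lipschitz}, I would derive
\[
\tfrac{1}{\cmon}\normk{u_k-w_k}^2 \leq -A_k(w_k;u_k-w_k) = (A_k(u;u_k-w_k)-A_k(w_k;u_k-w_k)) - A_k(u;u_k-w_k),
\]
where the first bracketed difference is controlled by $C_{\mathrm{Lip}}\normk{u-w_k}\normk{u_k-w_k}$ and the remaining term $A_k(u;u_k-w_k)$ is shown to vanish in the limit by asymptotic consistency and weak convergence. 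This yields $\normk{u-u_k}\to 0$, and then \eqref{eq:global_efficiency} immediately gives $\ek(u_k)\to 0$.

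\textbf{Main obstacle.} The crux of the argument is the asymptotic consistency in Step~2, i.e.\ the passage to the limit in $\int_\Omega \Fg[u_k]\Delta_k v_k$, in $S_k(u_k,v_k)$, and in the jump penalty term. The nonlinear integral combines only weak convergence on the $u_k$ side with the structure of $\Delta_k v_k$, so one must carefully exploit Lipschitz continuity \eqref{eq:cordes_ineq2} together with the strong convergence of lifted Laplacians; the lifted formulation in Lemma~\ref{lem:lifted_Bk_form} is essential, since the piecewise Laplacian of $v_k$ does not converge strongly in a useful sense. A second subtlety is the proper accounting for jumps across the limit skeleton, where the intrinsic characterization of $V_\infty$ and the density of functions with jumps on finitely many faces (Theorem~\ref{thm:finite_approx}) are the key ingredients that make the limits well-defined and independent of any specific approximating sequence.
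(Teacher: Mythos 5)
Your overall strategy is in the right spirit, but Step~2 contains a fatal error: the claim that $H^2(\Om)\cap H^1_0(\Om)\subset \Vinfty^s$ is false. By Definition~\ref{def:limit_space}, functions in $\Vinfty^s$ are required to be piecewise polynomials of degree at most $p$ on every element of $\Tp$. When the adaptive algorithm stops refining on a region of positive measure, $\Omp$ is large and a generic $H^2\cap H^1_0$ function (including, a priori, the exact solution $u$) is \emph{not} in $\Vinfty^s$. Consequently, you cannot test the limit problem with $u$, you cannot claim that $u$ solves the limit problem, and you cannot conclude $u_\infty=u$ by uniqueness for $A_\infty$. This is precisely why the paper does \emph{not} identify $u_\infty$ with $u$ by testing: the limit problem \eqref{eq:u_infty_def} is a priori a problem for piecewise polynomial functions with jumps on $\Fp$, and there is no obvious consistency statement relating it to $u$.

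A second, related problem is the role you assign to the marking condition \eqref{eq:max_marking}. You invoke it in Step~3 to argue that a quasi-interpolant $w_k$ of $u$ satisfies $\normk{u-w_k}\to 0$, but this is circular: such convergence would require $u\in\Vinfty^s$ (via Theorem~\ref{thm:limit_space_characterization}), which is equivalent to $u=u_\infty$, the very thing to be shown. In the paper, the marking condition is used earlier and differently: combined with reliability/efficiency of $\ek$ and the convergence $u_k\to u_\infty$, it yields that the \emph{maximum} element-wise estimator vanishes (Lemma~\ref{lem:max_estimator_vanishes}); this in turn is used to deduce that $\jump{u_\infty}$ and $\jump{\Npw u_\infty}$ vanish on all of $\Fp$ and $\Fpi$, hence $u_\infty\in H^2(\Om)\cap H^1_0(\Om)$, \emph{and} that $\Fg[u_\infty]=0$ a.e.\ on $\Omp$. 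The identification $u_\infty=u$ is then obtained from the \emph{continuous} strong monotonicity \eqref{eq:continuous_strong_monotonicity} applied to $z=u_\infty-u\in H^2\cap H^1_0$, together with a quasi-interpolant $z_k$ of $z$ (not of $u$) whose approximation error one only needs to control on $\Omm$, where the mesh-size vanishes by Lemma~\ref{lem:hjvanishes}. Without these two steps — max-estimator vanishing to establish $u_\infty\in H^2\cap H^1_0$ and $\Fg[u_\infty]=0$ on $\Omp$, followed by continuous strong monotonicity — your argument cannot close.
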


Theorem~\ref{thm:main} establishes plain convergence of the numerical solutions to the exact solution, without requiring any additional regularity assumptions on the problem.

\section{Analysis of the limit spaces}\label{sec:limspace}

In this section we introduce appropriate limit spaces for the sequence of the finite element spaces $\{\Vk^s\}_{k\in\N}$. 
We give here an intrinsic approach to the construction of the limit spaces, which is designed to overcome some key difficulties in the analysis of weak limits of bounded sequences of finite element functions. In particular, we construct the limit spaces in terms of some original function spaces that are of independent interest for adaptive nonconforming methods for more general problems.

\subsection{Sets of never-refined elements and faces}\label{sec:refinement_sets}
We start by considering some elementary properties of the sets of elements and faces that are never-refined, following~e.g.~\cite{DominincusGaspozKreuzer19,KreuzerGeorgoulis18,MorinSiebertVeeser08}.  
Let $\Tp$ be the set of elements of the sequence of meshes $\{\Tk\}_{k\in\N}$ that are never refined once created, i.e.\
\[
\Tp\coloneqq \bigcup_{m\geq 0}\bigcap_{k\geq m}\Tk,  
\]
and let $ \Omp \coloneqq \bigcup_{K\in \Tp}K$ be its associated subdomain.
Let the complement $\Omm\coloneqq\overline{\Omega}\setminus\Omp$, which represents the region of $\overline{\Om}$ where the mesh-sizes become vanishingly small in the limit, as shown by Lemma~\ref{lem:hjvanishes} below.
For $k\in \N$, let $\Tkp$ denote the set of never-refined elements in $\Tk$, and let $\Tk^{-}$ denote its complement in $\Tk$, given by
\[
\begin{aligned}
\Tkp \coloneqq \Tk\cap \Tp, &&& \Tk^{-} \coloneqq \Tk\setminus \Tkp.
\end{aligned}
\]
For integers $k\geq 1$ and $j\geq 0$, we also define the set $\Tkj \coloneqq \{K\in\Tk:N_k^j(K)\subset\Tk^+\}$ and its complement $\Tk^{j-} \coloneqq \Tk\setminus\Tk^{j+}$, where we recall that $N_k^j(K)$ denotes the set of all elements in $\Tk$ that are at most $j$-th neighbours of $K$. 
Recalling that $N_k^0(K)=K$, we have $\Tk^{0+}=\Tkp$ and $\Tk^{0-}=\Tk^{-}$. 
For the corresponding domains, we define $\Om_k^{j+} \coloneqq \bigcup_{K\in\Tk^{j+}} K$ and $\Om_k^{j-} \coloneqq \bigcup_{K\in\Tk^{j-}} K$. It follows that the intersection $\Om_k^{j+}\cap\Om_k^{j-}$ is a set of Lebesgue measure zero. 
Similar to $\Tkp$ and $\Tk^{-}$, we use the shorthand notation $\Omega_k^+\coloneqq\Omega_k^{0+}$, and $\Omega_k^-\coloneqq\Omega_k^{0-}$.
Furthermore, it is also easy to see that the sets $\Tkp$ and $\Tkj $ are ascending with respect to $k$, i.e.\ $\Tk^{j+} \subset \cT_{k+1}^{j+}$ for all $k\in \N$ and all $j\in\N_0$, whereas the $\Tkj$ are descending with respect to $j$, i.e.\ $\cT_k^{j+}\subset \cT_k^{(j-1)+}$ for all $j\in \N$.
The following two Lemmas are from \cite{DominincusGaspozKreuzer19}, see also \cite{MorinSiebertVeeser08}. The first Lemma  states that neighbours of never-refined elements are also eventually never-refined, and the second Lemma shows that the mesh-size functions converge uniformly to zero on the refinement sets $\Om_k^{j-}$ as $k\tends \infty$, for any fixed $j\in\N_0$. 

\begin{lemma}[\cite{DominincusGaspozKreuzer19,MorinSiebertVeeser08}]\label{lem:eventuallyinT+}
For every $K\in \Tp$ there exists an integer $m=m(K) \in \N$ such that $K\in\Tk^+$ for all $k\geq m$ and $N_k(K)=N_m(K) \subset \Tp$ for all $k\geq m$.
\end{lemma}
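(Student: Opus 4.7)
The first claim follows immediately from the definition of $\Tp$: since $K\in\bigcup_{m\geq 0}\bigcap_{k\geq m}\Tk$, there exists $m_1\in\N$ with $K\in\Tk$ for all $k\geq m_1$, whence $K\in\Tk\cap\Tp=\Tkp$ for all $k\geq m_1$.

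For the second claim, my plan is to combine two facts. First, by shape regularity, $\#N_k(K)$ is bounded uniformly in $k$ by a constant depending only on $\dim$ and the shape-regularity parameter: every element of $\Tk$ intersecting $K$ shares at least a vertex with $K$ by conformity of simplicial meshes, and the standard lower bound on interior (solid) angles of a shape-regular simplex limits the cardinality of the star of any fixed vertex. Second, for all $k$ sufficiently large, $N_k(K)\subset\Tp$. Granting these two facts, stabilization is immediate: the set $N_k(K)\cap\Tp$ grows monotonically in $k$ (a never-refined element intersecting $K$ persists as a neighbor in every subsequent mesh since $K$ itself is never refined) inside a finite ambient set, so $N_k(K)$ is eventually constant.

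The substance lies in the second fact, which I would prove by contradiction. If $K'\in N_k(K)\setminus\Tp$, then $K'$ is refined at some later step. Within the unique quasi-regular element subdivision framework of~\cite{MorinSiebertVeeser08}, the aim is to show that such a refinement cannot be carried out while preserving conformity unless $K$ is itself refined. For $K'$ sharing a full $(\dim-1)$-face with $K$, sustained refinement of $K'$ must eventually subdivide the shared face (since shape regularity forbids indefinite refinement that avoids that face), placing a hanging node on $\partial K$ and forcing $K$ to be refined, which contradicts $K\in\Tp$. For $K'$ sharing only a lower-dimensional simplex with $K$ (a vertex when $\dim=2$, or an edge or vertex when $\dim=3$), one propagates the argument by face-adjacency through the star of the shared feature: any refinement cascade initiated at $K'$ must, via conformity constraints, eventually reach an element that shares a $(\dim-1)$-face with $K$, reducing to the previous case. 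The principal obstacle is precisely this last propagation step, which rests on face-connectivity of shape-regular simplicial vertex and edge stars and on the geometric observation that repeated refinement of elements within a bounded shape-regular patch surrounding $K$ cannot avoid eventually splitting a face incident to $K$.
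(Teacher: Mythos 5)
Your first claim and your monotonicity/stabilization step are fine, but the argument for the middle step---that $N_k(K)\subset\Tp$ for all sufficiently large $k$---has a genuine gap. The contradiction hypothesis ``$K'\in N_k(K)\setminus\Tp$ for some $k$'' contradicts nothing: it is entirely consistent with $K\in\Tp$ that an early neighbour $K'$ is refined once or twice (its descendants may then lie in $\Tp$). What must be refuted is that $N_k(K)\setminus\Tp\neq\emptyset$ for arbitrarily large $k$. Moreover, the inference ``$K'\notin\Tp\Rightarrow$ sustained refinement of $K'\Rightarrow$ the shared face is eventually split'' does not hold: $K'\notin\Tp$ guarantees one refinement, not an unbounded cascade near $K$, so the premise of your face-splitting argument is never established. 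The ``propagation through stars'' step for lower-dimensional adjacency has the same flaw, and is additionally not an implication that conformity-completion guarantees; a completion cascade started at $K'$ may terminate without ever reaching an element sharing a $(d-1)$-face with $K$.

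The way the cited sources close this gap is simpler and algorithm-independent: a counting argument rather than a propagation argument. Local quasi-uniformity of conforming shape-regular meshes (chain through face-adjacent elements in the star of a shared vertex) gives $h_{K'}\eqsim h_K$ for every $K'\in N_k(K)$ and every $k$. Any such $K'$ lies in a ball of radius $\lesssim h_K$ about $K$, descends from one of finitely many initial elements, and, by the fixed per-generation diameter reduction in the quasi-regular subdivision framework together with $h_{K'}\gtrsim h_K$, sits at a uniformly bounded generation; with bounded branching this makes the set $\mathcal{N}\coloneqq\bigcup_k N_k(K)$ of all elements that are ever a neighbour of $K$ a \emph{finite} set. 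Each $K'\in\mathcal{N}\setminus\Tp$ leaves the mesh hierarchy at a finite stage, so taking $m^\ast$ to be the largest such stage yields $N_k(K)\subset\Tp$ for all $k\geq m^\ast$, after which your monotonicity argument gives stabilization. Replacing your conformity-propagation step with this finiteness/pigeonhole argument repairs the proof.
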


\begin{lemma}[\cite{DominincusGaspozKreuzer19,MorinSiebertVeeser08}]\label{lem:hjvanishes}
For any $j\in \N_0$, we have $\norm{h_k\chi_{\Om_k^{j-}}}_{L^\infty(\Om)}\tends 0$ as $k\tends \infty$, where $\chi_{\Om_k^{j-}}$ denotes the characteristic function of $\Om_k^{j-}$. Moreover, $|\Om_k^{j-}\setminus\Om^{-}|=|\Om^+\setminus\Om_k^{j+}|\to0$ as $k\to\infty$.
\end{lemma}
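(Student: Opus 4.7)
The plan is to handle the two claims in turn, noting that the first one about the $L^\infty$ decay of $h_k$ on the refinement region will do most of the real work, while the second claim about measures will be a fairly direct consequence of an iterated application of Lemma~\ref{lem:eventuallyinT+} together with standard continuity of measure.

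For the first claim, the key observation is that in the quasi-regular subdivision framework of~\cite{MorinSiebertVeeser08}, each element of $\bigcup_k \Tk$ is obtained from its ancestor in $\cT_1$ by a bounded number of subdivisions, each reducing the diameter by a factor bounded away from $1$. Consequently, for any fixed $\epsilon > 0$, the set
\[
\mathcal{E}_\epsilon \coloneqq \Big\{ K \in \bigcup_{k\in\N}\Tk : h_K \geq \epsilon \Big\}
\]
is finite. Next, I would promote Lemma~\ref{lem:eventuallyinT+} to $j$-th neighbours by iteration: for every $K \in \Tp$ and every $j \in \N_0$ there exists $m_j(K) \in \N$ such that $N_k^j(K) \subset \Tkp$ for all $k \geq m_j(K)$, hence $K \in \Tkj$ for $k \geq m_j(K)$. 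The induction base $j=0$ is immediate; for the step, one applies Lemma~\ref{lem:eventuallyinT+} to each $K' \in N^{j-1}_m(K) \subset \Tp$ and takes the maximum of the resulting thresholds, which is legitimate because $N^{j-1}_m(K)$ is finite.

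I can then combine these ingredients as follows. Split $\mathcal{E}_\epsilon = (\mathcal{E}_\epsilon\cap \Tp) \cup (\mathcal{E}_\epsilon\setminus \Tp)$. For $K\in\mathcal{E}_\epsilon \cap \Tp$, the iterated lemma gives $K \in \Tkj$ for $k\geq m_j(K)$. For $K\in\mathcal{E}_\epsilon\setminus \Tp$, the element is refined at some finite stage and so $K\notin \Tk$ for all sufficiently large $k$. Since $\mathcal{E}_\epsilon$ is finite, we may take a single $M_\epsilon$ past which both conditions hold uniformly, and then $\Tk^{j-}\cap\mathcal{E}_\epsilon = \emptyset$ for $k\geq M_\epsilon$; in other words, every $K\in\Tk^{j-}$ satisfies $h_K < \epsilon$, which gives the desired uniform convergence.

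For the second claim, I would first note that the decompositions $\overline{\Om}=\Om_k^{j+}\cup\Om_k^{j-}$ and $\overline{\Om}=\Omp\cup\Omm$ are each disjoint up to a set of Lebesgue measure zero, so
\[
\abs{\Om_k^{j-}\setminus\Omm} = \abs{\Om_k^{j-}\cap\Omp} = \abs{\Omp\setminus\Om_k^{j+}},
\]
giving the stated equality. To show the common value tends to zero, recall that $\Tk^{j+}\subset\cT_{k+1}^{j+}$, so $\{\Om_k^{j+}\}_{k\in\N}$ is ascending and $\{\Omp\setminus\Om_k^{j+}\}_{k\in\N}$ is descending with respect to $k$. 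The iterated form of Lemma~\ref{lem:eventuallyinT+} established above shows that every $K\in\Tp$ lies in $\Tkj$ for all sufficiently large $k$, and therefore $\Omp\subset\bigcup_{k\in\N}\Om_k^{j+}$ up to a set of measure zero. Hence $\bigcap_{k\in\N}(\Omp\setminus\Om_k^{j+})$ has measure zero, and continuity of the Lebesgue measure from above delivers $\abs{\Omp\setminus\Om_k^{j+}}\to 0$.

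The main obstacle is really just the iteration of Lemma~\ref{lem:eventuallyinT+} to $j$-th neighbours; once that is in hand, the finiteness of $\mathcal{E}_\epsilon$ and continuity of measure do the rest with very little work. One should be mildly careful that the refinement framework guarantees both a uniform geometric reduction of diameters under subdivision (which gives finiteness of $\mathcal{E}_\epsilon$) and that neighbours stabilize after finitely many refinements (which is exactly what Lemma~\ref{lem:eventuallyinT+} encodes).
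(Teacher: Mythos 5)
The paper cites this lemma from~\cite{DominincusGaspozKreuzer19,MorinSiebertVeeser08} rather than proving it, so there is no in-paper proof to compare against; your argument is correct and is essentially the standard one from that literature. Your two main ingredients are the right ones: finiteness of the set $\mathcal{E}_\epsilon$ of elements with $h_K\geq\epsilon$ (a consequence of the uniform geometric size reduction under quasi-regular subdivisions together with the finiteness of $\cT_1$), and the iteration of Lemma~\ref{lem:eventuallyinT+} to $j$-th neighbours; the case split of $\mathcal{E}_\epsilon$ into $\mathcal{E}_\epsilon\cap\Tp$ and $\mathcal{E}_\epsilon\setminus\Tp$ then correctly empties $\Tk^{j-}\cap\mathcal{E}_\epsilon$ for large $k$. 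The one small refinement worth making is to carry the stabilization $N_k^{j-1}(K)=N_+^{j-1}(K)$ for all large $k$ explicitly through the induction hypothesis, not merely the containment $N_k^{j-1}(K)\subset\Tkp$, since the inductive step applies Lemma~\ref{lem:eventuallyinT+} elementwise to a fixed finite set; your notation $N_m^{j-1}(K)$ indicates you already have this in mind and Lemma~\ref{lem:eventuallyinT+} supplies the stabilization. The measure claim then follows cleanly from the null-set decompositions, the monotonicity $\Om_k^{j+}\subset\Om_{k+1}^{j+}$, and continuity of measure, as you describe; in fact $\bigcap_{k}(\Omp\setminus\Om_k^{j+})$ is empty rather than merely null, since every $K\in\Tp$ lies in $\Om_k^{j+}$ for all $k$ sufficiently large.
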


For each $K\in \Tp$, let $N_+(K)$ denote the neighbourhood of $K$ in $\Tp$, i.e.\ $N_+(K)=\{K^\prime \in \Tp,\; K^\prime\cap K \neq \emptyset\}$. Lemma~\ref{lem:eventuallyinT+} implies that for each $K\in \Tp$, there exists $m=m(K)\in\N$ such that $N_+(K)= N_k(K)$ for all $k\geq m$. 

\paragraph{Never-refined faces.}
Let $\Fp$ denote the set of all faces of elements from $\Tp$, i.e. $F\in \Fp$ if and only if there exists $K\in \Tp$ such that $F$ is a face of $K$. 
The set $\Fp$ is at most a countably infinite subcollection of $\bigcup_{k\in\N} \Fk$. We also consider $\Fpi$ and $\Fpb$ the set of interior and boundary faces of $\Fp$, respectively.
For each $k\in\N$, let $\Fkp \coloneqq \Fk\cap \Fp$ denote the set of never-refined faces in $\Fk$. 
It holds trivially that $\Fp=\bigcup_{k\in\N} \Fkp$ and that the sets $\Fkp$ are ascending, with $\Fkp\subset \cF_{k+1}^+$ for all $k\in \N$.
We also consider the set $\Fks$, $k\in\N$, of faces of only elements in $\Tkp$, defined by
\begin{equation}\label{eq:Fks_def}
\Fks \coloneqq \{F\in\Fp \colon \exists \{K,K^\prime\}\subset \Tk^+, \text{ s.t. } F = K\cap K'\text{ or } F=K\cap\partial\Om\}.
\end{equation}
Additionally, let $\Fksi\coloneqq \Fpi \cap \Fks$ denote the subset of interior faces of $\Fks$.
The definition implies that $\Fks \subset \Fkp$ and $\Fksi \subset \Fkip$, however in general $\Fks\neq \Fkp$ since it is possible to refine pairs of neighbouring elements without refining their common face.
Note also that $\Fks \subset \cF_{k+1}^{\dagger}$ for all $k\in\N$ and thus $\{\Fks\}_{k\in\N}$ also forms ascending sequence of sets with respect to $k$.
Moreover, since neighbours of elements in $\Tp$ are eventually also in $\Tp$, as shown by Lemma~\ref{lem:eventuallyinT+}, and since the meshes $\Tk$ are conforming, we also have $\Fp = \bigcup_{k\in\N}\Fks$.
We also consider the skeletons formed by sets of never refined faces. In particular, let $\Sp$ denote the skeleton of $\Fp$, defined by $\Sp = \bigcup_{F\in\Fp}F$.
Additionally, let $\Skp \coloneqq \Sk\cap \Sp$.
It follows that $\Sp$ is a measurable set with respect to the $\dim-1$ dimensional Hausdorff measure with $\mathcal{H}^{\dim-1}(\Sp)\in[0,\infty]$, i.e.\ $\mathcal{H}^{\dim-1}(\Sp)$ is not necessarily finite.

The next Lemma shows that the set of never-refined faces of any particular mesh is fully determined after at most finitely many refinements.

\begin{lemma}\label{lem:face_refinements}
For each $k\in \N$ there exists $M=M(k)$ such that
\begin{equation}\label{eq:Fkp_equivalence}
\Fkp = \cF_k\cap \cF_m \quad  \forall m\geq M.
\end{equation}
\end{lemma}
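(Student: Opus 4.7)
My plan is to exploit the finiteness of $\Fk$ and establish, for each face $F\in\Fk$ individually, a finite threshold $M_F$ beyond which the membership of $F$ in $\cF_m$ stabilizes to the value determined by whether $F\in\Fkp$ or not. Since $\Tk$ is a finite mesh, taking $M(k)\coloneqq\max_{F\in\Fk}M_F$ will then yield~\eqref{eq:Fkp_equivalence}.

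For the forward inclusion, I would argue that if $F\in\Fkp$, then by definition $F$ is a face of some element $K\in\Tp$, and Lemma~\ref{lem:eventuallyinT+} supplies an integer $m(K)$ such that $K\in\cT_m$ for all $m\geq m(K)$. In particular $F\in\cF_m$ for all such $m$, and maximizing over the at most two elements of $\Tp$ adjacent to $F$ gives a finite threshold.

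The main work lies in the reverse inclusion. Assume $F\in\Fk\setminus\Fkp$; I must show $F\notin\cF_m$ for all sufficiently large $m$. A first key observation is a monotonicity property: since conforming simplicial refinement can only split faces, never merge them, the set $\{m\geq k : F\in\cF_m\}$ is an initial segment $\{k,k+1,\dots,N\}$ with $N\in\N\cup\{\infty\}$, so it suffices to rule out $N=\infty$. Suppose for contradiction that $F\in\cF_m$ for every $m\geq k$. For each such $m$, choose $K_m\in\cT_m$ with $F$ as a face. If $K_m$ is not refined at step $m+1$, set $K_{m+1}=K_m$; otherwise, since $F$ survives as a face in $\cT_{m+1}$ and the two-sided traces of the refinement across $F$ must agree by conformity, some child of $K_m$ must inherit $F$ as a full face, and I take $K_{m+1}$ to be that child. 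This produces a nested sequence $K_k\supseteq K_{k+1}\supseteq\cdots$ of simplices each containing $F$ as a face. Shape-regularity yields a lower bound $\abs{K_m}\gtrsim \mathcal{H}^{\dim-1}(F)^{\dim/(\dim-1)}$, while the quasi-regular subdivision framework of~\cite{MorinSiebertVeeser08} provides a uniform constant $c\in(0,1)$ with $\abs{K_{m+1}}\leq c\,\abs{K_m}$ whenever the inclusion is strict. Hence the chain must stabilize, i.e.\ $K_m=K_{m+1}=\cdots$ from some step $m_0$ onwards, which places $K_{m_0}$ in $\Tp$ and forces $F\in\Fp$, contradicting $F\notin\Fkp$.

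The main obstacle, as outlined above, is the nested-chain argument: it requires combining shape-regularity, which prevents the adjacent elements from becoming arbitrarily small while retaining $F$ as a face, with a uniform volume-decrease property of the refinement scheme, which rules out an infinite strictly decreasing chain. Once this is in place, gathering the finitely many thresholds $M_F$ into $M(k)$ completes the proof.
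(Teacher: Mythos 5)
Your proof is correct, but the reverse inclusion is handled by a genuinely different argument from the paper's. The paper also argues by contradiction, but does not build a nested chain or appeal to a volume-decrease constant for the refinement: it simply picks, for each $m_j$ in the offending subsequence, some $K_j\in\cT_{m_j}$ having $F$ as a face, uses shape-regularity to get $h_{m_j}|_{K_j^\circ}\gtrsim\diam(F)>0$ uniformly in $j$, and then invokes Lemma~\ref{lem:hjvanishes} (uniform vanishing of $h_k$ on $\Om_k^{j-}$) to conclude $K_j\in\cT_{m_j}^+$ for $j$ large, so $F\in\Fp$. This imports all the work you do by hand into a single cited lemma and sidesteps both of your auxiliary claims — the monotonicity of the face-survival set and the existence of a uniform contraction factor $c<1$ for the subdivision — neither of which the paper needs to verify.

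What your route buys is self-containment at the level of this lemma: you essentially unroll the relevant content of Lemma~\ref{lem:hjvanishes} via the nested chain $K_k\supseteq K_{k+1}\supseteq\cdots$ and the observation that a strictly decreasing, shape-regular chain with volume bounded below must stabilise, landing in $\Tp$. The price is extra overhead: the monotonicity claim (once a face is split it cannot reappear) and the claim that an unsplit facet of a refined simplex is inherited as a full facet by exactly one child, both of which you assert rather than derive, and the volume-decrease constant, which in the MSV framework is most cleanly obtained by noting each refined element is subdivided into at least two children of disjoint interior (so $c=1/2$ works). All of these are true and your argument closes, but since Lemma~\ref{lem:hjvanishes} is already available in the paper, using it directly is shorter and avoids restating refinement-scheme facts that the cited framework packages for you.
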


\begin{proof}
The inclusion $\Fkp \subset \cF_k\cap \cF_m$ for all $m$ large enough is clear and follows easily from the definitions. 
The converse inclusion $\Fk\cap \cF_m \subset \Fkp$ for all $m$ large enough is shown by contradiction. Since $\Fk$ is finite, if the claim were false, there would exist $F\in (\cF_{m_j}\cap \Fk)\setminus \Fkp$ for a sequence of indices $m_j\tends \infty $ as $j\tends \infty$. Then, by definition, there exists a sequence of elements $K_j\in \cT_{m_j}$ such that $F$ is a face of $K_j$ for each $j\in \N$. The shape-regularity of the meshes implies that $h_{m_j}|_{K_j^\circ}=\abs{K_j}^{1/\dim}\gtrsim \diam(F)$ for all $j\in\N$ and hence $\epsilon\coloneqq \inf_{j\in\N} h_{m_j}|_{K_j^\circ}$ is strictly positive. Lemma~\ref{lem:hjvanishes} then implies that there exists $J$ such that $h_{m_j}|_{K^\circ} <\epsilon$ for all $K\in \cT_{m_j}^{-}$ and all $j\geq J$, which implies that $K_j\in\cT_{m_j}^+$ for all $j\geq J$ and thus $F\in \Fp$. This implies that $F\in \Fkp$, thereby giving a contradiction and completing the proof.
\Fqed\end{proof}

\paragraph{Mesh-size function on never-refined elements and faces.}
Recalling the notation of Section~\ref{sec:notation}, for each $K\in \Tp$, let $h_+|_{K^\circ} \coloneqq h_K $ and for each face $F\in\Fp$, let $h_+|_{F} \coloneqq h_F $.
Thus $h_+$ is defined on $\Omp$ up to sets of $\mathcal{H}^{\dim-1}$-measure zero. The function $h_+$ is $\mathcal{H}^{\dim-1}$-a.e.\ positive on $\Omp$, although it is generally not uniformly bounded away from zero. 
Due to Lemma~\ref{lem:eventuallyinT+}, it follows that for each  $K\in\Tp$, there exists an $m=m(K)\in\N$ such that $h_+|_{K}=h_k|_{K}$ for all $k\ge m$, see also \cite[Lemma~4.3]{MorinSiebertVeeser08} which implies that $\norm{h_k-h_+}_{L^\infty(\Omp)}\tends 0$ as $k\tends \infty$.

\subsection{First-order spaces, Poincar\'e and trace inequalities.}

The construction of the limit spaces for the sequence of finite element spaces is broken down into several steps.
In a first step, we introduce particular subspaces of functions of (special) bounded variation with possible jumps only on the set of never-refined faces of the meshes, and that have sufficiently integrable gradients and jumps. 
We then show that these spaces are Hilbert spaces, and that they enjoy a Poincar\'e inequality and $L^2$-trace inequalities on all elements from all of the meshes.
Recall the notation of Section~\ref{sec:notation}, in particular for a function $v\in BV(\Om)$, the gradient $\nabla v$ denotes the density of the absolutely continuous part of the distributional derivative $Dv$.

\begin{definition}\label{def:H1limitspaces}
Let $H^1_D(\Om;\Tp)$ denote the space of functions $v \in L^2(\Om)$ such that the zero-extension of $v$ to $\R^\dim$, also denoted by $v$, belongs to $BV(\R^\dim)$, such that 
\begin{equation}\label{eq:distderiv_H100Tp}
\langle D v,\bphi \rangle_{\R^\dim} \coloneqq - \int_{\R^\dim} v \Div \bphi = \int_{\Om} \Npw v \cdot \bphi - \int_{\Fp} \jump{v} (\bphi{\cdot}\bn) \quad \forall \bphi\in \CidR,
\end{equation}
and such that
\begin{equation}\label{eq:H100Tp_norm}
\norm{v}_{H^1_D(\Om;\Tp)}^2\coloneqq\int_{\Om} \left[\abs{\Npw v}^2 +\abs{v}^2\right] + \int_{\Fp }h_+^{-1}\abs{\jump{v}}^2 <\infty.
\end{equation}
Let $H^1(\Om;\Tp)$ denote the space of functions $v\in L^2(\Om)\cap BV(\Om)$ such that 
\begin{equation}\label{eq:distderiv_H1Tp}
\langle D v,\bphi \rangle_{\Om} \coloneqq - \int_\Om v \Div \bphi = \int_{\Om} \Npw v \cdot \bphi - \int_{\Fpi} \jump{v} (\bphi{\cdot}\bn) \quad \forall \bphi\in \Cid,
\end{equation}
and such that
\begin{equation}\label{eq:H1Tp_norm}
\norm{v}_{H^1(\Om;\Tp)}^2 \coloneqq\int_{\Om} \left[\abs{\Npw v}^2 +\abs{v}^2\right] + \int_{\Fpi}h_+^{-1}\abs{\jump{v}}^2 <\infty.
\end{equation}
\end{definition}

\begin{remark}[Piecewise $H^1$-regularity over $\Tp$]\label{rem:notation}
For any $K\in \Tp$, by simply considering test functions $\bphi \in C^\infty_0(K;\R^\dim)$ in \eqref{eq:distderiv_H1Tp}, it is seen that any function $v\in H^1(\Om;\Tp)$ is $H^1$-regular over $K$, i.e.\ $v|_K \in H^1(K)$, and that the weak derivative $\nabla (v|_K)$ coincides with $(\nabla v)|_K$ the restriction of $\nabla v$ to $K$.
\end{remark}

\begin{remark}
The space $H^1_D(\Om;\Tp)$ consists of functions with a weakly imposed Dirichlet boundary condition on $\p \Omega$ through a Nitsche-type penalty term.
The definition of the space $H^1_D(\Om;\Tp)$ is motivated by the characterization of $H^1_0(\Om)$ as the space of measurable functions on $\Om$ whose zero-extension to $\R^\dim$ belongs to $H^1(\R^\dim)$, see \cite[Theorem~5.29]{AdamsFournier03}.
In particular, it follows that $H^1_0(\Om)$ is a closed subspace of $H^1_D(\Om;\Tp)$.
In general, functions in $H^1_D(\Om;\Tp)$ do not have vanishing interior traces on $\p\Omega$, which is why we avoid the notation $H^1_0(\Om;\Tp)$.
\end{remark}

We now show that the spaces in Definition~\ref{def:H1limitspaces} are continuously embedded into the corresponding spaces of functions of bounded variation.

\begin{lemma}\label{lem:BV_embedding_H1TP}
The space $H^1(\Om;\Tp)$ is continuously embedded in $BV(\Om)$. The space $H^1_D(\Om;\Tp)$ is continuously embedded in $BV(\R^\dim)$, where functions in $H^1_D(\Om;\Tp)$ are considered to be extended by zero to $\R^\dim$.
\end{lemma}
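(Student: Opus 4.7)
I would derive both embeddings directly from the duality characterization
\[
\abs{Dv}(\om)=\sup\!\left\{\int_\om v\,\Div\bphi:\bphi\in C^\infty_0(\om;\R^\dim),\,\|\bphi\|_{C(\overline{\om};\R^\dim)}\le 1\right\}
\]
by plugging in the defining identities \eqref{eq:distderiv_H1Tp} and \eqref{eq:distderiv_H100Tp}, and then controlling the resulting boundary integral via a shape-regularity estimate of the form $\int_{\Fp}h_+\lesssim\abs{\Om}$.

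First I would treat $H^1(\Om;\Tp)$. For $v\in H^1(\Om;\Tp)$ and any $\bphi\in\Cid$ with $\|\bphi\|_{C(\overline{\Om};\R^\dim)}\le 1$, identity \eqref{eq:distderiv_H1Tp} followed by the Cauchy--Schwarz inequality applied term by term yields
\[
\left|\int_\Om v\,\Div\bphi\right|\le \abs{\Om}^{1/2}\|\Npw v\|_{L^2(\Om)}+\left(\int_{\Fpi} h_+\right)^{\!1/2}\!\left(\int_{\Fpi}h_+^{-1}\abs{\jump{v}}^2\right)^{\!1/2}.
\]
Taking the supremum over admissible $\bphi$ bounds $\abs{Dv}(\Om)$, and the $L^1$ part of the $BV$-norm is handled by $\|v\|_{L^1(\Om)}\le\abs{\Om}^{1/2}\|v\|_{L^2(\Om)}$. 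All that remains is the geometric constant $\int_{\Fpi}h_+$.

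Second, I would establish the key auxiliary bound $\int_{\Fp}h_+\lesssim\abs{\Om}$. For each $F\in\Fp$, by definition there is at least one element $K_F\in\Tp$ of which $F$ is a face; shape-regularity gives $h_F\eqsim h_{K_F}$ and $\mathcal{H}^{\dim-1}(F)\eqsim h_F^{\dim-1}$, so $h_F\,\mathcal{H}^{\dim-1}(F)\eqsim h_{K_F}^{\dim}\eqsim\abs{K_F}$. Since each $K\in\Tp$ has at most $\dim+1$ faces, the assignment $F\mapsto K_F$ is at most $(\dim+1)$-to-one, hence
\[
\int_{\Fp}h_+=\sum_{F\in\Fp}h_F\,\mathcal{H}^{\dim-1}(F)\lesssim\sum_{K\in\Tp}\abs{K}=\abs{\Omp}\le\abs{\Om},
\]
where one uses that any two elements of $\Tp$ both lie in a common mesh $\Tk$ for $k$ sufficiently large and hence have disjoint interiors.

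Third, the $H^1_D(\Om;\Tp)$ embedding is obtained by the identical argument run over $\R^\dim$: one uses \eqref{eq:distderiv_H100Tp} with test fields $\bphi\in\CidR$, replaces $\Fpi$ by $\Fp=\Fpi\cup\Fpb$ throughout (which is still controlled by the same shape-regularity bound), and notes that since $v$ is extended by zero one has $\|v\|_{L^1(\R^\dim)}=\|v\|_{L^1(\Om)}\le\abs{\Om}^{1/2}\|v\|_{L^2(\Om)}$. The only mildly delicate point is the estimate $\int_{\Fp}h_+\lesssim\abs{\Om}$, because $\Fp$ may be countably infinite; once the summation is recharged to the pairwise disjoint elements of $\Tp$ via $F\mapsto K_F$, the rest of the argument is a direct reading of the duality formula against the defining identities of the spaces.
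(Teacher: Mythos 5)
Your proof is correct and follows essentially the same route as the paper: plug the defining identity for the distributional derivative into the duality formula for total variation, apply Cauchy--Schwarz, and control $\int_{\Fp}h_+$ by shape-regularity plus a counting argument over the pairwise-disjoint elements of $\Tp$. Your write-up is, if anything, a bit more explicit than the paper's (the $(\dim+1)$-to-one assignment $F\mapsto K_F$ and the disjointness of elements of $\Tp$ are only implicit in the paper's "counting argument"), but there is no substantive difference in method.
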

\begin{proof}
Consider first the case of $H^1(\Om;\Tp)$ and let $v\in H^1(\Om;\Tp)$ be arbitrary.
Recall that $\pair{Dv,\bphi}_\Om$ is given by \eqref{eq:distderiv_H1Tp} for any $\bphi\in \Cid$.
Thus $\abs{\pair{Dv,\bphi}_{\Om}}\leq \int_\Om \abs{\Npw v}+\int_{\Fp} \abs{\jump{v}}$ for any $\bphi\in \Cid$ such that $\norm{\bphi}_{C(\overline{\Om},\R^\dim)}\leq 1$.
Since $\Om$ is bounded, we get $\norm{\nabla v}_{L^1(\Om)}\lesssim \norm{v}_{H^1(\Om;\Tp)}$.
For the term involving jumps, the Cauchy--Schwarz inequality gives $\int_{\Fp} \abs{\jump{v}}\leq \left(\int_{\Fpi}h_+^{-1}\abs{\jump{v}}^2\right)^{\frac{1}{2}} \left(\int_{\Fpi}h_+\right)^\frac{1}{2}$.
To bound $\int_{\Fpi}h_+$, consider any face $F\in\Fp$, and let $K\in\Tp$ be an element that contains $F$.
Then, by shape-regularity of the meshes, we have $\int_F h_+ = \left(\mathcal{H}^{\dim-1}(F)\right)^{\frac{d}{d-1}} \lesssim \abs{K}$, and thus, after a counting argument, we get $\int_{\Fpi}h_+\lesssim \abs{\Omp}\leq \abs{\Om}<\infty$ since $\Om$ is bounded.
These bounds then imply that $\abs{Dv}(\Om)\lesssim \norm{v}_{H^1(\Om;\Tp)}$ and thus $H^1(\Om;\Tp)$ is continuously embedded in $BV(\Om)$.
The proof of the corresponding claim for $H^1_D(\Om;\Tp)$ is similar to the one given above, where we only need to additionally use the fact that functions in $H^1_D(\Om;\Tp)$, once extended by zero to $\R^\dim$, remain compactly supported.
\Fqed\end{proof}

\begin{theorem}\label{thm:completeness_H1tp}
The space $H^1(\Om;\Tp)$ is a Hilbert space with the inner-product
\begin{equation*}
\begin{aligned}
\pair{w,v}_{H^1(\Om;\Tp)}\coloneqq\int_\Om \left[\Npw w{\cdot}\Npw v + wv \right]+\int_{\Fpi} h_+^
{-1}\jump{w}\jump{v} &&&\forall w,\, v\in H^1(\Om;\Tp).
\end{aligned}
\end{equation*}
The space $H^1_D(\Om;\Tp)$ is a Hilbert space with the inner-product 
\begin{equation*}
\begin{aligned}
\pair{w,v}_{H^1_D(\Om;\Tp)}\coloneqq  \int_\Om \left[\Npw w{\cdot}\Npw v + wv \right]+\int_{\Fp}h_+^{-1}\jump{w}\jump{v} &&&\forall w,\,v \in H^1_D(\Om;\Tp).
\end{aligned}
\end{equation*}
\end{theorem}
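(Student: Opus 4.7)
The plan is first to verify that the two bilinear forms in the statement are inner products inducing exactly the norms \eqref{eq:H1Tp_norm} and \eqref{eq:H100Tp_norm} of Definition~\ref{def:H1limitspaces}, which is immediate by inspection. The substantive content is completeness, and I focus on $H^1(\Om;\Tp)$ since the argument for $H^1_D(\Om;\Tp)$ is identical up to handling zero-extensions to $\R^\dim$ and boundary faces.

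Let $\{v_n\}\subset H^1(\Om;\Tp)$ be Cauchy. From \eqref{eq:H1Tp_norm}, the sequences $\{v_n\}$, $\{\Npw v_n\}$, and $\{h_+^{-1/2}\jump{v_n}\}$ are Cauchy in $L^2(\Om)$, $L^2(\Om;\R^\dim)$ and $L^2(\Fpi;\mathcal{H}^{\dim-1})$ respectively, so they admit limits $v$, $g$, and $\ell$ in these spaces. By Lemma~\ref{lem:BV_embedding_H1TP}, the sequence is uniformly bounded in $BV(\Om)$, and $L^1$-convergence together with lower semicontinuity of the total variation gives $v\in BV(\Om)$.

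The main obstacle is to identify $g$ with $\Npw v$ and $\ell$ with $h_+^{-1/2}\jump{v}$, where $\jump{v}$ denotes the intrinsic jump of a $BV$ function via one-sided traces as in Section~\ref{sec:BV}. I exploit the key fact that every element adjacent to a face $F\in\Fpi$ lies in $\Tp$: by Remark~\ref{rem:notation}, $v_n|_K\in H^1(K)$ for every $K\in\Tp$ with $\Npw(v_n|_K)=(\Npw v_n)|_K$, so $L^2$-Cauchyness upgrades to $v_n|_K\to v|_K$ in $H^1(K)$. The standard $H^1$-trace theorem applied on the two elements neighbouring $F$ yields $\jump{v_n}_F\to \jump{v}_F$ in $L^2(F)$. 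Since $h_+$ equals the constant $h_F$ on $F$, uniqueness of $L^2(F)$-limits forces $\ell|_F = h_F^{-1/2}\jump{v}|_F$ face-by-face, hence $\ell = h_+^{-1/2}\jump{v}$ globally on $\Fpi$ and in particular $\int_{\Fpi}h_+^{-1}\abs{\jump{v}}^2 < \infty$.

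Finally I pass to the limit in \eqref{eq:distderiv_H1Tp} applied to $v_n$ tested against an arbitrary $\bphi\in\Cid$: the two bulk terms converge by $L^2$-convergence of $v_n$ and $\Npw v_n$, and the face term converges by the factorisation $\jump{v_n}(\bphi\cdot\bn) = (h_+^{-1/2}\jump{v_n})(h_+^{1/2}(\bphi\cdot\bn))$ together with a Cauchy--Schwarz argument, noting that $h_+^{1/2}(\bphi\cdot\bn)\in L^2(\Fpi;\mathcal{H}^{\dim-1})$ by the bound $\int_{\Fpi}h_+\lesssim\abs{\Om}$ established in the proof of Lemma~\ref{lem:BV_embedding_H1TP}. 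This produces \eqref{eq:distderiv_H1Tp} for $v$ with $g$ in place of $\Npw v$, and uniqueness of the Lebesgue decomposition of $Dv$ then yields $g=\Npw v$ a.e.\ in $\Om$. Together with the jump identification above, $v\in H^1(\Om;\Tp)$ and $v_n\to v$ in the $H^1(\Om;\Tp)$-norm. The case of $H^1_D(\Om;\Tp)$ follows by the same argument, additionally using Lemma~\ref{lem:BV_embedding_H1TP} to conclude that the zero-extension of $v$ lies in $BV(\R^\dim)$ and applying the same elementwise trace argument on the unique element of $\Tp$ adjacent to each boundary face to identify the boundary jump terms.
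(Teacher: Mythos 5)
Your proof is correct, and it takes a mildly but genuinely different route from the paper's. The paper's argument goes through the $BV$-machinery more systematically: it invokes the continuous embedding into $BV$ (Lemma~\ref{lem:BV_embedding_H1TP}) to extract a $BV$ limit, then uses the $BV\to L^1(\partial K)$ trace operator to get face-trace convergence, and finally applies a diagonal argument over the countable family $\Fp$ to extract a subsequence with pointwise $\mathcal H^{\dim-1}$-a.e.\ convergence of the jumps, so that Fatou's Lemma yields both finiteness of $\int_{\Fp}h_+^{-1}\lvert\jump{v}\rvert^2$ and the convergence of the jump seminorm. You instead treat $h_+^{-1/2}\jump{v_n}$ as a Cauchy sequence in the complete space $L^2(\Spi;\mathcal H^{\dim-1})$, obtaining a strong limit $\ell$ directly, and identify $\ell$ face-by-face via elementwise $H^1(K)$-regularity (Remark~\ref{rem:notation}) and the $H^1$-trace theorem rather than $BV$-trace continuity; this removes the need for both the diagonal extraction and Fatou's Lemma. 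Both approaches are valid since for $v|_K\in H^1(K)$ the Sobolev and $BV$ traces coincide, but your route is arguably more elementary where it matters. One minor presentational point: your phrase ``$L^2$-Cauchyness upgrades to $v_n|_K\to v|_K$ in $H^1(K)$'' is really completeness of $H^1(K)$ applied to the $H^1(K)$-Cauchy sequence, with the limit identified as $v|_K$ by uniqueness of $L^2$-limits; it would be cleaner to state this in two steps. Your final passage to the limit in~\eqref{eq:distderiv_H1Tp} plus the Lebesgue decomposition of $Dv$ to identify $g=\Npw v$ is exactly the right closing move and is essentially what the paper sketches as well.
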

\begin{proof}
It is clear that the spaces $H^1(\Om;\Tp)$ and $H^1_D(\Om;\Tp)$ are inner-product spaces when equipped with their respective inner-products, so it is enough to show that they are complete.
We give the proof in the case of $H^1_D(\Om;\Tp)$ as it is similar for $H^1(\Om;\Tp)$. Consider a Cauchy sequence $\{v_k\}_{k\in\N} \subset H^1_D(\Om;\Tp)$. Then, the continuous embedding of $H^1_D(\Om;\Tp)$ into $BV(\R^\dim)$ implies the existence of a $v\in BV(\R^\dim)$ such that $v_k\tends v$ in $BV(\R^\dim)$. Since convergence in $BV(\R^\dim)$ implies convergence in $L^1(\R^\dim)$, and the $v_k$ form a Cauchy sequence in $L^2(\R^\dim)$, by uniqueness of limits we then deduce that $v\in L^2(\R^\dim)$ and that $v_k \tends v$ in $L^2(\R^\dim)$. In particular, $v=0$ a.e.\ on $\R^\dim\setminus\Om$.
Furthermore, continuity of the trace operator from $BV(K)$ to $L^1(\p K)$ for each $K\in\Tp$ implies that $\jump{v_k}_F \tends \jump{v}_F \in L^1(F)$ for each $F\in\Fp$, and again the functions $\jump{v_k}_F$ form a Cauchy sequence in $L^2(F)$, so we deduce similarly that $\jump{v}_F\in L^2(F)$ for all $F\in\Fp$.
Additionally, using a diagonal argument over the countable set $\Fp$, we may extract a subsequence $\{v_{k_j} \}_{j\in\N}$ such that $\jump{v_{k_j}}\tends \jump{v}$ pointwise $\mathcal{H}^{\dim-1}$-a.e.\ on $\Sp$, recalling that $\Sp\coloneqq\bigcup_{F\in\Fp} F$.
Therefore, Fatou's Lemma implies that $\int_{\Fp}h_+^{-1}\abs{\jump{v}}^2=\int_{\Sp}h_+^{-1}\abs{\jump{v}}^2<\infty$ and that $\int_{\Fp} h_{+}^{-1} \abs{\jump{v-v_k}}^2_F=\int_{\Sp}  h_{+}^{-1} \abs{\jump{v-v_k}}^2_F \leq \liminf_{j\tends\infty}\int_{\Sp}h_+^{-1}\abs{\jump{v_{k_j}-v_k}}^2  \tends 0$ as $k\tends \infty$.
Then, using the fact that $\Npw v_k$ is a Cauchy sequence in $\Ld$, it is easy to show that the distributional derivative $Dv$  is also of the form in~\eqref{eq:distderiv_H100Tp} and that $\Npw v_k \tends \Npw v$ in $\Ld$. This implies that $v\in H^1_D(\Om;\Tp)$ and that $v_k\tends v $ as $k\tends\infty$.
\Fqed\end{proof}

The following Theorem shows that functions in $H^1(\Om;\Tp)$ and $H^1_D(\Om;\Tp)$ can be approximated by functions from the same space that have at most finitely many nonvanishing jumps.
\begin{theorem}\label{thm:H1tp_finite_jumps}
For every $v\in H^1(\Om;\Tp)$, respectively $v\in H^1_D(\Om;\Tp)$, there exists a sequence of functions $v_k \in H^1(\Om;\Tp)$ for all $k\in\N$, respectively $v_k\in H^1_D(\Om;\Tp)$ for all $k\in\N$, such that $\lim_{k\tends \infty }\norm{v-v_k}_{H^1(\Om;\Tp)} =0$, respectively $\lim_{k\tends \infty }\norm{v-v_k}_{H^1_D(\Om;\Tp)} =0$ and such that, for each $k\in \N$, there are only finitely many faces $F\in\Fpi$, respectively $\Fp$, such that $\jump{v_k}_F\neq 0$.
Moreover, $v_k=v$ and $\Npw v_k=\Npw v$ a.e.\ on $\Omp_k\cup\Omm$ for each $k\in\N$, and $\int_{\Tp}h_+^{-2}\abs{v-v_k}^2\tends 0$ as $k\tends \infty$.
\end{theorem}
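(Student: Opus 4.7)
I treat the case $v\in H^1(\Om;\Tp)$ in detail; the case $v\in H^1_D(\Om;\Tp)$ is analogous upon extending $v$ by zero to $\R^\dim$ and additionally accounting for the boundary faces of $\Tkp$. The strategy is to set $v_k\coloneqq v$ on $\Omp_k\cup\Omm$, which automatically gives the required coincidence properties, and to replace $v$ on $\Omp\setminus\Omp_k$ by an extension that has no jumps across the infinitely many faces strictly inside $\Omp\setminus\Omp_k$ and matches the trace of $v$ on the finitely many faces of $\p\Omp_k$. Because $\Omp_k$ is a finite union of elements of $\Tkp\subset\Tk$, the resulting $v_k$ has nonzero jumps only on the finitely many faces of $\Fks$ lying between pairs of elements of $\Tkp$, which yields the required finite jump set.

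\textbf{Construction and local estimates.} Since $v|_K\in H^1(K)$ for each $K\in\Tp$ by Remark~\ref{rem:notation}, the trace of $v$ on $\p\Omp_k$ is in $L^2(\p\Omp_k)$. On each connected component of $\Omp\setminus\Omp_k$, I define $v_k$ as a stable extension that is $H^1$-regular on each $K\in\Tp\setminus\Tkp$, matches $v$ on $\p\Omp_k$, and satisfies local estimates of the form
\begin{equation*}
\norm{v_k}_K\lesssim \norm{v}_{N_+(K)},\quad \norm{\Npw v_k}_K\lesssim \norm{\Npw v}_{N_+(K)},\quad h_K^{-1}\norm{v-v_k}_K\lesssim \norm{\Npw (v-v_k)}_K,
\end{equation*}
for each $K\in\Tp\setminus\Tkp$, where $N_+(K)$ is the neighbourhood of $K$ in $\Tp$. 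A concrete choice is a Scott--Zhang-type interpolation operator adapted to the locally finite limit structure $\Tp$, combined with a partition-of-unity patch near $\p\Omp_k$ to enforce the trace match; alternatively a componentwise harmonic extension may be used.

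\textbf{Convergence and main obstacle.} Since $v-v_k$ is supported in $\Omp\setminus\Omp_k$ and $|\Omp\setminus\Omp_k|\to 0$ by Lemma~\ref{lem:hjvanishes}, dominated convergence applied to the integrable dominants $|v|^2$, $|\Npw v|^2$ and (on $\Fp$) $h_+^{-1}|\jump v|^2$ shows that the contributions of $v$ to the norm on $\Omp\setminus\Omp_k$ vanish; the contributions of $v_k$ are controlled by the first two local estimates above and vanish similarly. The weighted bound $\int_\Tp h_+^{-2}|v-v_k|^2\to 0$ follows directly from the third displayed inequality, squared and summed over $K\in\Tp\setminus\Tkp$. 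The principal difficulty is the construction of the extension: the domain $\Omp\setminus\Omp_k$ may have infinitely many connected components with irregular geometry, so uniform stability of the extension operator requires a careful elementwise construction via locally finite partitions of unity respecting $\p\Omp_k$, rather than a single global elliptic extension.
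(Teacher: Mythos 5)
Your overall plan — keep $v$ on $\Omp_k\cup\Omm$ and replace it on $\Omp\setminus\Omp_k$ by a function without jumps there — is the right one and is what the paper does (via a nodal quasi-interpolant into piecewise affines on $\Tp\setminus\Tkp$, obtained by $L^2$-projecting $v$ elementwise and then averaging vertex values, exactly as in Theorem~\ref{thm:finite_approx} but one order lower). However, your sketch has two substantive problems and one unnecessary complication.

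First, the local estimate $h_K^{-1}\norm{v-v_k}_K\lesssim \norm{\Npw(v-v_k)}_K$ is false as stated. This is a Friedrichs/Poincar\'e inequality for $v-v_k$ on a single element and needs either a vanishing-trace or zero-mean condition on $K$, neither of which your construction supplies; in particular, for a $K$ deep inside $\Omp\setminus\Omp_k$ the trace-matching constraint on $\p\Omp_k$ is inactive and $v-v_k$ can be a nonzero constant, making the right side vanish. The correct quasi-interpolation bound controls the error by $v$ itself on a patch, namely
\begin{equation*}
\int_K h_+^{-2}\abs{v-v_k}^2 + \int_K\abs{\Npw(v-v_k)}^2 \;\lesssim\; \int_{N_+(K)}\abs{\Npw v}^2 + \int_{\Fp_K^I} h_+^{-1}\abs{\jump v}^2,
\end{equation*}
and then one sums over $K\in\Tp\setminus\Tkp$ and observes that the right-hand side is the tail of a convergent series (this is the precise analogue of the paper's estimate~\eqref{eq:CT_quasi_interpolant_bound}). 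Second, and relatedly, your local stability estimates $\norm{v_k}_K\lesssim\norm{v}_{N_+(K)}$ and $\norm{\Npw v_k}_K\lesssim\norm{\Npw v}_{N_+(K)}$ omit the jump contribution entirely. Since $v$ has jumps across faces of $\Fpi$ inside the patch, a globally $H^1$ quasi-interpolant on $\Omp\setminus\Omp_k$ must pay a price proportional to $\int_{\Fp_K^I}h_+^{-1}\abs{\jump v}^2$; without this term the gradient bound is simply not attainable. Third, the requirement that $v_k$ match the trace of $v$ exactly on $\p\Omp_k$ is not needed by the theorem (which permits finitely many nonvanishing jumps) and it is what forces you into the ``partition-of-unity near $\p\Omp_k$'' and ``infinitely many irregular components'' difficulties you flag but do not resolve. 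The paper's construction sidesteps this entirely: elements adjacent to $\Tkp$ are allowed to produce nonzero jumps across those finitely many interfaces, and jumps are killed only between pairs of averaged elements.

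In short, the structural idea matches the paper, but the local estimate mechanism is wrong, the jump terms are missing from the bounds, and the trace-matching constraint introduces a genuine obstruction rather than resolving one.
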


We postpone the proof of Theorem~\ref{thm:H1tp_finite_jumps} until after the proof of Theorem~\ref{thm:finite_approx} below, owing to the similar nature of the two results and the similarities in their proofs.

\begin{corollary}\label{cor:H1_omm_restriction}
For every $v\in H^1_D(\Om;\Tp)$, there exists a $w\in H^1_0(\Om)$ such that $v=w$ and $\nabla v= \nabla w$ a.e.\ on~$\Omm$.
\end{corollary}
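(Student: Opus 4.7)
The plan is to apply Theorem~\ref{thm:H1tp_finite_jumps} to reduce to a function with only finitely many nonvanishing jumps, and then produce the desired $w \in H^1_0(\Om)$ by subtracting an explicit piecewise-$H^1$ correction supported in a finite union of never-refined elements contained in $\overline{\Omp}$.

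Given $v \in H^1_D(\Om;\Tp)$, I apply Theorem~\ref{thm:H1tp_finite_jumps} and select any single element $\tilde v$ of the approximating sequence. Then $\tilde v \in H^1_D(\Om;\Tp)$ satisfies $\tilde v = v$ and $\Npw\tilde v = \Npw v$ a.e.\ on $\Omm$, and $\jump{\tilde v}_F \neq 0$ for only finitely many $F \in \Fp$, say $F_1,\ldots,F_N$. Since $\tilde v$ coincides with $v$ a.e.\ on $\Omm$, it suffices to construct $w \in H^1_0(\Om)$ such that $w = \tilde v$ a.e.\ on $\Omm$.

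Let $\omega \subset \overline{\Omp}$ denote the finite union of never-refined elements that are adjacent to some $F_i$ or that carry a nonzero interior trace of $\tilde v$ on a boundary face in $\Fpb$. For each $K \subset \omega$, I construct a correction $\psi_K \in H^1(K)$ whose trace on each face $F \subset \p K$ matches a prescribed datum $g_{K,F}$: $\pm \tfrac{1}{2}\jump{\tilde v}_F$ (with sign determined by $\bn_F$) on faces shared with another element of $\Tp$, the full jump $\jump{\tilde v}_F$ on faces shared with an element outside $\Tp$, and $\tau_F^-\tilde v$ on boundary faces $F \subset \p\Om$. The data lie in $H^{1/2}(F)$ because $\tilde v|_K \in H^1(K)$ by Remark~\ref{rem:notation}. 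Setting $\psi \coloneqq \sum_{K \subset \omega} \psi_K$ extended by zero outside $\omega$, and $w \coloneqq \tilde v - \psi$, the choice of prescribed traces ensures that $w$ has no jumps across any $F \in \Fpi$, vanishing interior trace on every $F \in \Fpb$, and, via \eqref{eq:distderiv_H100Tp} applied to $\tilde v$ together with the fact that no face of $\Fpb$ meets $\p\Om \cap \overline{\Omm}$, vanishing interior trace on the remainder of $\p\Om$ as well. Hence $w \in H^1_0(\Om)$, and $\psi \equiv 0$ on $\Omm$ yields $w = \tilde v = v$ a.e.\ on $\Omm$.

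The main technical obstacle is the elementwise construction of the lifting $\psi_K$ when the trace data $g_{K,F}$ do not satisfy the natural edge- and vertex-compatibility needed for a direct $H^1$-extension from $\p K$; this is handled by face-localised liftings via face-bubble-type functions combined with vertex-correction terms that jointly implement the prescribed traces in a weak (rather than pointwise) sense on $\p K$, a construction that is standard in the finite element literature on Scott--Zhang and related quasi-interpolation operators. A secondary subtlety is the verification that the interior trace of $v$ vanishes on the portion $\p\Om \cap \overline{\Omm}$ of the boundary, which follows by testing \eqref{eq:distderiv_H100Tp} against compactly supported vector fields concentrated near this set, exploiting that no face of $\Fp$ meets this region.
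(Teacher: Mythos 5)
Your initial reduction via Theorem~\ref{thm:H1tp_finite_jumps} matches the paper's first step, but the construction thereafter diverges and contains a gap that cannot be closed as stated. The elementwise liftings $\psi_K \in H^1(K)$ with prescribed face data $g_{K,F}$ do not exist in general, because the prescribed data need not stitch together into an element of $H^{1/2}(\p K)$: each $\jump{\tilde v}_F$ is individually in $H^{1/2}(F)$ (each neighbouring element contributes an $H^1$ trace), but at a shared vertex the contributions to $g_{K,F_1}$ and $g_{K,F_2}$ come from \emph{different} neighbouring elements and there is no reason for the vertex compatibility condition for $H^{1/2}(\p K)$ to hold. You acknowledge this and propose matching the traces only ``in a weak sense'' using face-bubble and vertex-correction terms, but that is fatal for your purpose: if the trace of $\psi_K$ does not match $g_{K,F}$ exactly $\mathcal{H}^{\dim-1}$-a.e.\ on each face, the jumps of $w = \tilde v - \psi$ across faces in $\Fp$ do not vanish, so $w$ is not in $H^1(\Om)$, let alone $H^1_0(\Om)$. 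Quasi-interpolation-style weak matching is the right tool for proving approximation bounds, but it is not the right tool for producing a conforming function with \emph{exactly} prescribed traces.

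The paper avoids the lifting problem entirely. After reducing to $v_k$ with finitely many nonvanishing jumps, it notes that the finite set of elements $\Omp_m$ carrying all potential jumps of $v_k$ is compact and disjoint from $\overline{\Omm}$ (by Lemma~\ref{lem:eventuallyinT+}), picks a smooth cutoff $\eta\in C^\infty_0(\R^\dim)$ with $\eta|_{\Omm}=1$ and $\eta|_{\Omp_m}=0$, and sets $w := \eta v_k$ (with $v_k$ extended by zero). A Leibniz-rule computation of $\langle Dw,\bphi\rangle_{\R^\dim}$ together with \eqref{eq:distderiv_H100Tp} then shows that every jump term is annihilated, because $\eta$ vanishes identically on the faces where $v_k$ jumps; hence $Dw$ is absolutely continuous with $L^2$ density, so $w\in H^1(\R^\dim)$ and $w\in H^1_0(\Om)$ by the zero-extension characterization. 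This multiplicative cutoff trick also avoids the boundary-trace verification you flag as a secondary subtlety: one never needs to identify the interior trace of $v$ on $\p\Om\cap\overline{\Omm}$, since $H^1_0$-membership follows directly from the global $H^1(\R^\dim)$ regularity of the zero-extension. You should adopt the cutoff strategy instead of the trace-lifting strategy.
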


\begin{proof}
If $\Omm$ is empty then there is nothing to show, so we need only consider the case where $\Omm$ is nonempty.
Choose $k\in \N$ and let $v_k \in H^1_D(\Om;\Tp)$ be given by Theorem~\ref{thm:H1tp_finite_jumps}.
We infer from $\Fp=\bigcup_{\ell\in\N}\Flp$ with ascending sets $\Flp$, c.f.\ Section~\ref{sec:refinement_sets}, that there exists $m=m(k)$ such that $v_k$ has nonzero jumps only on $\cF^{\dagger}_m$, i.e.\ $\jump{v_k}_F=0$ for every face $F\in \Fp\setminus \cF^{\dagger}_m$.
Since any element in $\Tp$ is by definition closed, it follows that $\Omp_m$ is a finite union of closed sets, and moreover it follows from Lemma~\ref{lem:eventuallyinT+} that $\Omp_m$ is disjoint from $\overline{\Omm}$. Therefore, $\Omp_m$ and $\overline{\Omm}$ are two disjoint compact sets in $\R^\dim$, so there exists a $\eta \in C^\infty_0(\R^\dim)$ such that $\eta|_{\Omm}=1$ and $\eta|_{\Omp_m}=0$.
Then, define $w(x) \coloneqq \eta(x) v_k(x) $ for all $x\in \R^\dim$, where we recall that $v_k$ is extended by zero outside of $\Om$.
We see that $w=v$ a.e.\ on $\Omm$ immediately from the facts that $v_k=v$ on $\Omm$ and $\eta=1$ on $\Omm$. 
It remains only to show that $w\in H^1_0(\Om)$.
Note that $v_k=0$ on $\R^\dim\setminus \overline{\Om}$ by definition, therefore $w=0$ on $\R^\dim\setminus \overline{\Om}$.
Then, for any test function $\bphi \in \CidR$, we have
\begin{equation}\label{eq:H1_omm_restriction_1}
\langle Dw , \bphi\rangle_{\R^\dim} = \int_{\R^\dim} \left[ -v_k \Div (\eta \bphi) + v_k \nabla \eta \cdot \bphi\right] = \langle D v_k, \eta\bphi\rangle_{\R^\dim}+ \int_{\R^\dim}v_k \nabla \eta \cdot \bphi.
\end{equation}
Since $v_k\in H^1_D(\Om;\Tp)$ has a distributional derivative satisfying \eqref{eq:distderiv_H100Tp}, and since $\eta$ vanishes identically on every face $F\in \cF^{\dagger}_m \subset \Omp_m$, whereas $\jump{v_k}_F=0$ for every face $F\in \Fp\setminus \cF^{\dagger}_m$; we then see from~\eqref{eq:H1_omm_restriction_1} that
\begin{equation}
\langle D w,\bphi\rangle_{\R^\dim} =  \int_{\Om} (\eta \Npw v_k + v_k \nabla \eta){\cdot} \bphi - \int_{\Fp}\jump{v_k}(\eta \bphi\cdot \bn)=
\int_{\Om} (\eta \Npw v_k + v_k\nabla \eta)\cdot \bphi
\end{equation}
for all $\bphi \in \CidR$, which implies that $w\in H^1(\R^\dim)$ and $\nabla w = \eta \Npw v_k + v_k\nabla \eta$. Since $w=0$ outside $\overline{\Om}$, we conclude that $w\in H^1_0(\Om)$ by \cite[Theorem~5.29]{AdamsFournier03}, and since $\eta=1$ on $\Omm$, we find that $\nabla w = \nabla v_k = \nabla v$ a.e.\ on $\Omm$, which completes the proof.
\Fqed\end{proof}

We now turn to some key properties of the space $H^1(\Om;\Tp)$, namely that it enjoys a Poincar\'e and $L^2$-trace inequalities. For an element $K\in \Tk$ for some $k\in\N$, let $\mathcal{F}^+_{\circ}(K)$ denote the set of faces in $\Fpi$ that are contained in $K$ but do not lie entirely on the boundary of $K$, i.e.
\begin{equation}\label{eq:fp_circ}
\mathcal{F}^+_{\circ}(K) \coloneqq \{ F\in\Fpi\colon F\subset K,\; F\not\subset\p K \}.
\end{equation}
Note that by definition no boundary face of $\Fp$ can intersect the interior of any element of any mesh.
The following Theorem shows that functions in $H^1(\Om;\Tp)$ enjoy a Poincar\'e inequality over elements of the meshes $\Tk$, with optimal scaling with respect to element sizes. Recall that $h_K=\abs{K}^{\frac{1}{\dim}}\eqsim \diam K$ owing to shape-regularity of the sequence of meshes. 

\begin{theorem}[Poincar\'e inequality]\label{thm:poincare}
For every $k\in\N$ and any $K\in \Tk$, we have 
\begin{equation}\label{eq:poincare}
h_K^{-2}\int_K \abs{v-\overline{v_K}}^2 \lesssim \int_K \abs{\Npw v }^2 + \int_{\Fp_{\circ}(K)} h_+^{-1} \abs{\jump{v}}^2 \quad \forall v\in H^1(\Om;\Tp),
\end{equation}
where $\overline{v_K}$ denotes the mean-value of $v$ over $K$ and $\Fp_{\circ}(K)$ is defined in~\eqref{eq:fp_circ}.
\end{theorem}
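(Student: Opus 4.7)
The plan is to proceed by a duality argument. Since $\int_K(v-\overline{v_K})=0$, it suffices to bound $\left|\int_K(v-\overline{v_K})g\right|$ for arbitrary $g\in L^2(K)$ with $\int_K g = 0$ and $\norm{g}_K = 1$, and then take the supremum.

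Given such a $g$, let $\psi\in H^2(K)$ be the unique zero-mean solution of the Neumann problem $-\Delta\psi = g$ in $K$, $\bn_K\cdot\Npw\psi = 0$ on $\p K$. Since $K$ is a simplex, hence a convex polytope, the standard $H^2$-regularity theorem for the Neumann problem applies. An affine reduction to a reference simplex combined with shape-regularity yields
\begin{equation*}
\norm{\Npw\psi}_K\lesssim h_K\norm{g}_K, \qquad \norm{\Dpw\psi}_K\lesssim \norm{g}_K,
\end{equation*}
with constants depending only on $\dim$ and the shape-regularity of the meshes.

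The key step is integration by parts. By~\eqref{eq:distderiv_H1Tp} applied with test fields supported in $K^\circ$, the only jump faces of $v$ contributing inside $K$ are those in $\mathcal{F}^+_{\circ}(K)$. Extending this to the $BV$-Gauss--Green formula with the $H^1$-field $\bphi = \Npw\psi$, whose normal trace on $\p K$ vanishes, kills the boundary contribution on $\p K$; using $-\Delta\psi = g$ and $\int_K g = 0 = \int_K \Delta\psi$ then gives
\begin{equation*}
\int_K(v-\overline{v_K})g = \int_K \Npw v\cdot\Npw\psi - \sum_{F\in\mathcal{F}^+_{\circ}(K)}\int_F \jump{v}\,(\Npw\psi\cdot\bn).
\end{equation*}
The first integral is bounded by $h_K\norm{\Npw v}_K\norm{g}_K$ using the estimate on $\Npw\psi$. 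For the jump sum, I apply Cauchy--Schwarz together with the standard trace inequality on each element $K'\in\Tp$ with $K'\subset K$ and $F$ a face of $K'$; using $h_F\eqsim h_{K'}$ from shape-regularity, this produces
\begin{equation*}
\sum_{F\in\mathcal{F}^+_{\circ}(K)} h_F\norm{\Npw\psi}_F^2 \lesssim \sum_{\substack{K'\in\Tp\\ K'\subset K}}\bigl(\norm{\Npw\psi}_{K'}^2 + h_{K'}^2\norm{\Dpw\psi}_{K'}^2\bigr)\lesssim h_K^2\norm{g}_K^2,
\end{equation*}
since the elements $K'\subset K$ are pairwise disjoint inside $K$. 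Combining these bounds and taking the supremum over admissible $g$ yields~\eqref{eq:poincare}.

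The main technical obstacle will be rigorously justifying the $BV$-Gauss--Green identity with an $H^1$ test field instead of a smooth one; the standard route is a density argument on the Lipschitz polytope $K$ producing smooth vector fields with vanishing normal trace on $\p K$, so that the boundary term is identically zero at the approximating level and passes cleanly to the limit. A secondary point is the Neumann $H^2$-regularity on the convex simplex $K$ with uniform constants, which reduces via affine equivalence and shape-regularity to the regularity on finitely many reference simplices.
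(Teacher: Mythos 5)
Your duality argument is correct in outline but takes a genuinely different route from the paper. Where you solve a Neumann problem $-\Delta\psi = g$, $\Npw\psi\cdot\bn = 0$ on $K$ and test $Dv$ against $\bphi = \Npw\psi$, the paper invokes the Bogovskii operator to produce $\bphi\in H^1_0(K;\R^\dim)$ with $\Div\bphi = v-\overline{v_K}$ and $\norm{\nabla\bphi}_K\lesssim\norm{v-\overline{v_K}}_K$. The crucial payoff of the paper's choice is that the test field lies in $H^1_0(K;\R^\dim)$: the bound $\abs{\langle Dv,\bphi\rangle_K}\lesssim h_K\bigl(\int_K\abs{\Npw v}^2+\int_{\Fp_\circ(K)}h_+^{-1}\abs{\jump{v}}^2\bigr)^{1/2}\norm{\nabla\bphi}_K$ is proved directly for $\bphi\in C^\infty_0(K;\R^\dim)$ (using the Poincar\'e--Friedrichs inequality $\norm{\bphi}_K\lesssim h_K\norm{\nabla\bphi}_K$ together with trace inequalities on sub-elements of $K$ in $\Tp$, exactly as you do), and the extension to $H^1_0(K)$ is by the completely standard density of $C^\infty_0(K)$. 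Your field $\Npw\psi$ only has vanishing normal trace, so to pass the Gauss--Green identity for the $BV$ function $v$ to this $H^1$ field you need density of smooth fields with vanishing normal trace in the corresponding $H^1$-subspace on the polytope $K$; that is noticeably less elementary (the normal changes discontinuously across edges and vertices), and you also need uniform $H^2$-Neumann regularity on convex simplices. On the latter point, the proposed ``affine reduction to a reference simplex'' does not close the argument as stated: under an affine map $\Delta$ becomes a constant-coefficient anisotropic operator and the Neumann condition becomes a co-normal condition, so the regularity constant is not read off from a single reference Laplace--Neumann problem but rather from a convexity-based $H^2$ estimate for the mapped problem with constants controlled by shape regularity. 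None of this is irreparable, but the paper's Bogovskii route sidesteps all three technical issues at once, and that is the reason it is preferable here.
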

\begin{proof}
Let $v\in H^1(\Om;\Tp)$ be arbitrary. 
Since $v\in L^2(\Om)$ it is clear that the restriction of the distributional derivative $Dv$ to $K$ is in $H^{-1}(K;\R^\dim)$.
We start by showing that
\begin{equation}\label{eq:distderiv_H10}
\norm{D v}_{H^{-1}(K;\R^\dim)} \lesssim h_K \left( \int_K \abs{\Npw v }^2 + \int_{\Fp_{\circ}(K)} h_+^{-1} \abs{\jump{v}}^2\right)^{\frac{1}{2}},
\end{equation}
where $\norm{D v}_{H^{-1}(K)}\coloneqq \sup \{\abs{ \pair{D v,\bphi}_{K} }\colon \bphi \in H^1_0(K;\R^\dim), \norm{\nabla \bphi}_K =1\}$.
By density of smooth compactly supported functions in $H^1_0(K;\R^\dim)$, it is enough to show~\eqref{eq:distderiv_H10} for $\bphi\in C^\infty_0(K;\R^\dim)$.
 Consider now an arbitrary $\bphi\in C^\infty_0(K;\R^\dim)$, and extend it by zero to $\Omega$. Then $\pair{Dv,\bphi}_K=\pair{Dv,\bphi}_\Om$ is given by \eqref{eq:distderiv_H1Tp}. Since $\bphi$ is compactly supported in $K$ and vanishes on faces in $\Fp\setminus \Fp_\circ(K)$, the Cauchy--Schwarz inequality gives 
 \begin{equation}
 \abs{\pair{Dv,\bphi}_K}\leq \norm{\Npw v}_K \norm{\bphi}_K+  \left(\int_{\Fp_\circ(K)}h_+^{-1}\abs{\jump{v}}^2\right)^{\frac{1}{2}} \left(\int_{\Fp_\circ(K)} h_+ \norm{\bphi}_F^2\right)^{\frac{1}{2}}.
 \end{equation}
Then, the multiplicative trace inequality, applied to the parent elements from $\Tp$ of each face $F\in \Fp_\circ(K)$, and the Cauchy--Schwarz inequality imply that
\begin{equation}
 \begin{split}
 \int_{\Fp_\circ(K)} h_+ \norm{\bphi}_F^2 &\lesssim \sum_{K^\prime\in\Tp(K)}\mkern-18mu \left[ h_{K^\prime} \norm{\nabla \bphi}_{K^\prime}\norm{\bphi}_{K^\prime } + \norm{\bphi}_{K^\prime}^2\right]
 \\ &\leq h_K \norm{\nabla \bphi}_K \norm{\bphi}_K + \norm{\bphi}_K^2 \lesssim h_K^2 \norm{\nabla \bphi}^2_K,
 \end{split}
 \end{equation}
where $\Tp(K)\coloneqq \{K^\prime\in \Tp\colon K^\prime \subset K\}$ is the set of elements of $\Tp$ contained in $K$, and where we have used the Poincar\'e--Friedrichs inequality $\norm{\bphi}_K\lesssim h_K \norm{\nabla \bphi}_K$ for $\bphi \in C^\infty_0(K;\R^\dim)$.
This implies that $\pair{Dv,\bphi}_K$ is bounded by the right-hand side of~\eqref{eq:distderiv_H10} for all $\bphi \in C^\infty_0(K;\R^\dim)$ such that $\norm{\nabla \bphi}_K=1$, and thus $Dv$ extends to a distribution in $H^{-1}(K)$ satisfying~\eqref{eq:distderiv_H10}.
Next, we use the fact that for any $v \in L^2(K)$, there exists a vector field $\bphi \in  H^1_0(K;\R^\dim) $ such that $\Div \bphi = v - \overline{v_K}$ in $K$ and such that $\norm{\nabla\bphi}_{K} \lesssim \norm{v-\overline{v_K}}_{K}$, see~\cite{Bogovskii79}.
In particular we may take the constant to depend only on the shape-regularity of the meshes and on the spatial dimension, since $\bphi$ can be obtained by mapping back to a reference element through the Piola transformation, see e.g. the textbook~\cite[p.~59]{BoffiBrezziFortin2013}.
Then, noting that $\int_K \Div \bphi \overline{v_K} =0$, we obtain
\[
\norm{v-\overline{v_K}}_K^2= \int_K v \Div \bphi = - \langle Dv, \bphi \rangle_K \lesssim \norm{Dv}_{H^{-1}(K;\R^\dim)} \norm{\nabla\bphi}_{K },
\]
and then we use~\eqref{eq:distderiv_H10} and $\norm{\nabla\bphi}_{K}\lesssim \norm{v-\overline{v_K}}_{K}$ to obtain~\eqref{eq:poincare}.
\Fqed\end{proof}

For each $K\in\Tk$, recall that $\tauk  \colon BV(K)\tends L^1(\partial K)$ denotes the trace operator. We now show that functions in $H^1(\Om;\Tp)$ have traces in $L^2$ over all element boundaries. Recall again that $h_K=\abs{K}^{\frac{1}{\dim}} \eqsim \diam K$ owing to the shape-regularity of the meshes.

\begin{theorem}[Trace inequality on element boundaries]\label{thm:trace}
For every $k\in\N$ and every $K\in\Tk$, the trace operator $\tauk $ is a bounded operator from $H^1(\Om;\Tp)$  to $L^2(\p K)$ and satisfies
\begin{equation}\label{eq:trace_inequality}
h_K^{-1}\int_{\p K} \abs{\tauk v}^2 \lesssim  \int_K \left[\abs{\Npw v}^2 + h_K^{-2}\abs{v}^2\right] + \int_{\mathcal{F}^+_{\circ}(K)}h_+^{-1} \abs{\jump{v}}^2 \quad\forall v \in H^1(\Om;\Tp),
\end{equation}
where $\mathcal{F}^+_{\circ}(K)$ is defined by~\eqref{eq:fp_circ}.
\end{theorem}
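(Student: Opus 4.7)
The plan is to combine a Gauss--Green identity for $v^2$ on $K$ with a suitably scaled smooth vector field, and then lift the resulting estimate from bounded to general functions by truncation. First assume $v\in H^1(\Om;\Tp)\cap L^\infty(\Om)$. By Lemma~\ref{lem:BV_embedding_H1TP} and Remark~\ref{rem:notation}, $v|_K\in BV(K)\cap L^\infty(K)$ with distributional derivative equal to $\Npw v\,dx$ plus a jump part $\jump{v}\bn\,d\mathcal{H}^{\dim-1}$ concentrated on $\mathcal{F}^+_{\circ}(K)$; the BV chain rule then yields $v^2\in BV(K)\cap L^\infty(K)$ with absolutely continuous density $2v\Npw v$ and jump density $(\tau^+v+\tau^-v)\jump{v}\bn$ on $\mathcal{F}^+_{\circ}(K)$.

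Choose a smooth vector field $\bphi\in C^\infty(\overline K;\R^\dim)$ with $\bphi\cdot\bn_K\gtrsim 1$ on $\partial K$, $\|\bphi\|_{L^\infty(K)}\lesssim 1$, and $\|\nabla\bphi\|_{L^\infty(K)}\lesssim h_K^{-1}$; for instance $\bphi(x)=h_K^{-1}(x-x_K)$ with $x_K$ the incenter of $K$ works, with constants depending only on shape regularity. The Gauss--Green formula for BV functions then gives
\[
\int_{\partial K}(\tauk v)^2\,\bphi\cdot\bn_K = \int_K v^2\Div\bphi + 2\int_K v\,\Npw v\cdot\bphi + \int_{\mathcal{F}^+_{\circ}(K)}(\tau^+v+\tau^-v)\jump{v}\,\bphi\cdot\bn.
\]
I would control the three terms by Cauchy--Schwarz, using for the face integral that each $F\in\mathcal{F}^+_{\circ}(K)$ is a face of some element $K'\in\Tp$ with $K'\subset K$, on which $v|_{K'}\in H^1(K')$, so the classical multiplicative trace inequality and a summation over the disjoint $K'\subset K$ yield $\sum_{F\in\mathcal{F}^+_{\circ}(K)}h_+|_F\int_F(|\tau^-v|^2+|\tau^+v|^2)\lesssim \|v\|_K^2+h_K^2\|\Npw v\|_K^2$. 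The resulting mixed products of the form $\|v\|_K J^{1/2}$ and $h_K\|\Npw v\|_K J^{1/2}$, where $J\coloneqq\int_{\mathcal{F}^+_{\circ}(K)}h_+^{-1}|\jump{v}|^2$, are then balanced by Young's inequality with parameter scaling as $h_K^{-1}$; dividing through by $h_K$ produces~\eqref{eq:trace_inequality} for bounded $v$.

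The general case follows by truncation: $v_M\coloneqq \max(\min(v,M),-M)$ lies in $H^1(\Om;\Tp)\cap L^\infty(\Om)$ and satisfies $|\Npw v_M|\le|\Npw v|$ and $|\jump{v_M}|\le|\jump{v}|$, while $\tauk v_M$ equals the pointwise truncation of $\tauk v$. The right-hand side of the inequality applied to $v_M$ converges to that for $v$ as $M\to\infty$ by dominated convergence, and Fatou's lemma controls the left-hand side. The main subtlety is the sharp scaling with respect to $h_K$: the jump term on the right must appear with unit prefactor, which forces the $h_K^{-1}$-balancing in each application of Young's inequality. A secondary technicality, the BV chain rule for $v^2$, requires the $L^\infty$-hypothesis that is precisely what the truncation step removes.
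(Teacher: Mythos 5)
Your proof is correct, but it follows a genuinely different route from the paper's. The paper first proves the inequality for $v\in H^1(\Om;\Tp)$ with nonvanishing jumps on only finitely many faces of $\Fpi$: in that case $v$ is piecewise $H^1$ with respect to the finite conforming subtriangulation $\cT_\ell(K)$ of $K$ for some sufficiently large $\ell$, and the estimate then follows by citing the trace inequality for piecewise $H^1$ functions over finite subdivisions from Feng--Karakashian \cite[Lemma~3.1]{FengKarakashian01}; the general case is then recovered via the density result of Theorem~\ref{thm:H1tp_finite_jumps} together with Fatou's lemma. You instead give a direct, self-contained argument: the Gauss--Green identity for $v^2$ on $K$ against a shape-regularly scaled vector field, the BV chain rule to compute $D(v^2)$, the multiplicative trace inequality on the children $K'\in\Tp$ of $K$ to control the face averages $\avg{v}$, Young's inequality with the $h_K^{-1}$ weight to balance the cross terms, and a truncation argument to dispense with the $L^\infty$ hypothesis that the chain rule requires. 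The scaling calculation does close: after multiplying the Gauss--Green identity by $h_K^{-1}$ and applying Young's inequality, each term lands with the correct weight, and the truncation/Fatou passage is unproblematic since the truncation operator commutes with BV traces and is a contraction on gradients and jumps.

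What each approach buys: the paper's argument is shorter and outsources the technical core to a known citation, but it relies on the density theorem (Theorem~\ref{thm:H1tp_finite_jumps}), whose proof is deferred until later in the paper and is itself nontrivial, requiring the HCT quasi-interpolation machinery (albeit only the simpler first-order analogue). Your argument avoids the density theorem entirely and uses only elementary BV facts (Gauss--Green, chain rule) plus the classical multiplicative trace inequality; it is more self-contained and logically earlier in the dependency chain, at the modest cost of a longer computation and the truncation bookkeeping. One point worth making explicit in a full write-up is that each $K'\in\Tp$ containing a face $F\in\Fp_\circ(K)$ is necessarily contained in $K$ — this follows from mesh nestedness together with the fact that such an $F$ is not a subset of $\p K$ — which is what licenses the summation over the disjoint children and the bound $h_{K'}\le h_K$.
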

\begin{proof}
We start by showing \eqref{eq:trace_inequality} for functions $v\in H^1(\Om;\Tp)$ that have non-vanishing jumps on only finitely many faces of $\Fp$, and we will extend the result to all of $H^1(\Om;\Tp)$ with the density result of Theorem~\ref{thm:H1tp_finite_jumps}. 
First, suppose that there is a $\ell\in\N$ such that $\jump{v}_F=0$ for all $F\in\Fpi\setminus \cF_{\ell}^+$. It is then easy to see that $v|_{K^\prime}\in H^1(K^\prime)$ for any $K^\prime \in \cT_{\ell}$, because the interior of any element $K^\prime$ of $\cT_{\ell}$ is disjoint from all faces in $\cF_\ell^+$.
Now, if $\ell< k$, then there is nothing to show as $v\in H^1(K)$ and the inequality \eqref{eq:trace_inequality} is then simply the scaled trace inequality for functions in $H^1(K)$, and the jump terms in the right-hand side of~\eqref{eq:trace_inequality} would then vanish as a sum over an empty set.
If $\ell\geq k$, then let $\cT_{\ell}(K)\coloneqq\{K^\prime\in \cT_\ell\colon K^\prime \subset K\}$ denote the set of children of $K$ in the mesh $\cT_{\ell}$, and note that $\cT_{\ell}(K)$ forms a conforming shape-regular triangulation of $K$ by nestedness of the meshes. Moreover, the function $v$ is piecewise $H^1$-regular over $\cT_{\ell}(K)$.
Then, inequality~\eqref{eq:trace_inequality} holds owing to~\cite[Lemma~3.1]{FengKarakashian01}, which proves the trace inequality~\eqref{eq:trace_inequality} for piecewise $H^1$-regular functions with respect to finite subdivisions of an element. 
To generalise the result to all functions in $H^1(\Om;\Tp)$, consider now an arbitrary $v\in H^1(\Om;\Tp)$ and let $\{v_{\ell}\}_{\ell\in\N}\subset H^1(\Om;\Tp)$ denote the sequence given by Theorem~\ref{thm:H1tp_finite_jumps} (indexed now by $\ell$).
The continuous embedding of $H^1(\Om;\Tp)$ into $BV(\Om)$, given by Lemma~\ref{lem:BV_embedding_H1TP}, shows that $v_{\ell}\tends v$ in $BV(\Om)$ as $\ell\tends\infty$, so the traces $\tauk v_{\ell} \tends \tauk v$ in $L^1(\p K)$ as $\ell\tends\infty$.
But then, after extracting a subsequence (without change of notation), we can assume that $\tauk v_{\ell}\tends \tauk v$ pointwise $\mathcal{H}^{\dim-1}$-a.e.\ on $\p K$ as $\ell\tends \infty$. Fatou's lemma then allows us to conclude that $\tauk v \in L^2(\p K)$ and that~\eqref{eq:trace_inequality} holds for general $v\in H^1(\Om;\Tp)$.
\Fqed\end{proof}

\subsection{Second-order space, symmetry of Hessians and approximation by quadratic polynomials.}

We now turn towards the second key step in constructing a suitable limit space for the sequence of finite element spaces. In Definition~\ref{def:HD_def} below, we introduce a space of functions with suitably regular gradients and Hessians and sufficiently integrable jumps in values and gradients over never-refined faces. 
Recall that we consider here the notion of Hessian defined in~\eqref{eq:Hessian_notation} for functions of bounded variation with gradients of bounded variation.

\begin{definition}\label{def:HD_def}
Let $\HD$ denote the space of functions $v \in H^1_D(\Om;\Tp)$ such that $\Npw_{x_i} v \in H^1(\Om;\Tp)$ for all $i=1,\dots,\dim$, where $\Npw v=(\Npw_{x_1} v,\dots,\Npw_{x_\dim} v)$, and such that
\begin{equation}\label{eq:HD_norm_def}
\norm{v}_{\HD}^2 \coloneqq \int_\Om \left[\abs{\Dpw v}^2 + \abs{\Npw v}^2 + \abs{v}^2 \right] + \int_{\Fpi} h_{+}^{-1}\abs{\jump{\nabla v}}^2 + \int_{\Fp} h_{+}^{-3}\abs{\jump{v}}^2  <\infty.
\end{equation}
\end{definition}

Note that each component $\Npw_{x_i} v$ has a distributional derivative of the form~\eqref{eq:distderiv_H1Tp} if and only if
\begin{equation}\label{eq:distderiv_h2}
\pair{D(\Npw v),\bvphi}_{\Om} \coloneqq - \int_\Om \Npw v\cdot \Div \bvphi = \int_\Om \Dpw v : \bvphi - \int_{\Fpi} \jump{\Npw v}\cdot(\bvphi \bn),
\end{equation}
for all $\bvphi \in \Cidd$, where the divergence $\Div\bvphi$ is defined by $(\Div \bvphi)_i \coloneqq \sum_{j=1}^\dim \nabla_{x_j} \bvphi_{ij}$ for all $i\in\{1,\dots,\dim\}$.
Therefore, a function $v\colon \Om\tends \R$ belongs to $\HD$ if and only if $v\in H^1_D(\Om;\Tp)$, if $D(\nabla v)$ is of the form given in~\eqref{eq:distderiv_h2}, and if $\norm{v}_{\HD}<\infty$.
The space $\HD$ is clearly non-empty and contains $H^2(\Om)\cap H^1_0(\Om)$ as a closed subspace.

\begin{theorem}[Completeness]\label{thm:completeness_H2}
The space $\HD$ is a Hilbert space under the inner-product
\begin{multline}\label{eq:HD_innerprod_def}
\pair{w,v}_{\HD}\coloneqq \int_\Om\left[\Dpw w:\Dpw v+\Npw w\cdot \Npw v+wv\right]\\ +\int_{\Fpi}h_+^{-1}\jump{\Npw w}\cdot\jump{\Npw v}+\int_{\Fp}h_+^{-3}\jump{w}\jump{v},
\end{multline}
for all $w$, $v\in\HD$.
\end{theorem}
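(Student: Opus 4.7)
The plan is to mirror closely the structure of the proof of Theorem~\ref{thm:completeness_H1tp}, exploiting two continuous embeddings: $\HD \hookrightarrow H^1_D(\Om;\Tp)$, and, componentwise, the map $v\mapsto \Npw_{x_i} v$ from $\HD$ into $H^1(\Om;\Tp)$. The first embedding is continuous because $h_+$ is uniformly bounded above on $\Omp$ by $\diam(\Om)$, so $h_+^{-1} \lesssim h_+^{-3}$ on $\Fp$ and hence
\[
\int_{\Fp} h_+^{-1}\abs{\jump{v}}^2 \lesssim \int_{\Fp} h_+^{-3}\abs{\jump{v}}^2 \leq \norm{v}_{\HD}^2.
\]
The continuity $\norm{\Npw_{x_i} v}_{H^1(\Om;\Tp)} \leq \norm{v}_{\HD}$ is immediate from~\eqref{eq:H1Tp_norm} and~\eqref{eq:HD_norm_def}.

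Given these embeddings, I would take a Cauchy sequence $\{v_k\}_{k\in\N}\subset \HD$. By the first embedding and Theorem~\ref{thm:completeness_H1tp}, there exists $v\in H^1_D(\Om;\Tp)$ such that $v_k\tends v$ in $H^1_D(\Om;\Tp)$. By the second embedding and Theorem~\ref{thm:completeness_H1tp} applied componentwise, for each $i\in\{1,\dots,\dim\}$ there exists $g_i\in H^1(\Om;\Tp)$ with $\Npw_{x_i} v_k\tends g_i$ in $H^1(\Om;\Tp)$. Since $\Npw v_k\tends \Npw v$ in $\Ld$ from the $H^1_D$-convergence and $\Npw_{x_i}v_k\tends g_i$ in $L^2(\Om)$, uniqueness of $L^2$-limits forces $g_i=\Npw_{x_i}v$, so each partial derivative of $v$ lies in $H^1(\Om;\Tp)$. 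This already yields $\Dpw v_k\tends \Dpw v$ in $L^2(\Om;\R^{\dim\times\dim})$ and $\jump{\Npw v_k}\tends \jump{\Npw v}$ in the $h_+^{-1}$-weighted $L^2$ norm on $\Fpi$, and by construction $v$ has the distributional derivative structure required for membership in $\HD$ up to the finiteness of the $h_+^{-3}$-weighted jump term.

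To control that remaining term, which is not covered by $H^1_D$-convergence, I would use a diagonal extraction and Fatou's lemma as in the proof of Theorem~\ref{thm:completeness_H1tp}. The $H^1_D$-convergence yields $\jump{v_k}_F\tends \jump{v}_F$ in $L^2(F)$ for every $F\in\Fp$, and since $\Fp$ is countable, a diagonal argument produces a subsequence $\{v_{k_j}\}$ with $\jump{v_{k_j}}\tends \jump{v}$ pointwise $\mathcal{H}^{\dim-1}$-a.e.\ on $\Sp$. Fatou's lemma then gives
\[
\int_{\Fp} h_+^{-3}\abs{\jump{v}}^2 \leq \liminf_{j\tends\infty}\int_{\Fp}h_+^{-3}\abs{\jump{v_{k_j}}}^2 < \infty,
\]
so $v\in\HD$. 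For convergence in the $\HD$-norm, every term but the $h_+^{-3}$-weighted one converges by what is already established; for that term, fixing $k$ and applying Fatou's lemma to $\jump{v_{k_j}-v_k}$ yields
\[
\int_{\Fp}h_+^{-3}\abs{\jump{v-v_k}}^2 \leq \liminf_{j\tends\infty}\int_{\Fp}h_+^{-3}\abs{\jump{v_{k_j}-v_k}}^2,
\]
which tends to zero as $k\tends\infty$ by the Cauchy property.

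The only subtle point, and thus the main obstacle, is the handling of the $h_+^{-3}$-weighted jump term, which is strictly stronger than anything controlled by $H^1_D$- or $H^1(\Om;\Tp)$-convergence; however, this is resolved by exactly the same Fatou/diagonal extraction technique already used in Theorem~\ref{thm:completeness_H1tp}, and no new technical ingredient is needed beyond verifying the two embeddings mentioned at the outset.
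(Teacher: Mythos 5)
Your proof is correct and follows essentially the same route as the paper's: reduce to the completeness of $H^1_D(\Om;\Tp)$ for the values and of $H^1(\Om;\Tp)$ for each partial derivative (via Theorem~\ref{thm:completeness_H1tp}), identify the limits by uniqueness of $L^2$-limits, and then handle the $h_+^{-3}$-weighted jump term by the Fatou/diagonal-subsequence argument already used in the first-order case. The only difference is that you spell out the embedding inequalities and the matching of limits a bit more explicitly than the paper does, but the substance is identical.
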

\begin{proof}
It is clear that $\HD$ is an inner-product space when equipped with the inner-product defined above, so it is enough to show that it is complete. Considering a Cauchy sequence $\{v_k\}_{k\in\N}$, it follows from Theorem~\ref{thm:completeness_H1tp} that there exists a $v\in H^1_D(\Om;\Tp)$ such that $v_k\tends v$ in $H^1_D(\Om;\Tp)$; moreover Theorem~\ref{thm:completeness_H1tp} also shows that $\Npw_{x_i} v\in H^1(\Om;\Tp)$ with $\Npw_{x_i} v_k \tends \Npw_{x_i} v$ in $H^1(\Om;\Tp)$ for each $i\in\{1,\dots,\dim\}$. This implies in particular that $\Dpw v_k \tends \Dpw v $ in $\Ldd$.
Then, using a pointwise a.e.\ convergent subsequence for the jumps over faces, similar to the one in the proof of Theorem~\ref{thm:completeness_H1tp}, we find also that $\int_{\Fp}h_+^{-3}\abs{\jump{v}}^2<\infty$ and $\int_{\Fp}h_+^{-3}\abs{\jump{v-v_k}}^2\tends 0$ as $k\tends \infty$. This proves that $v\in\HD$ and that $v_k\tends v$ in~$\HD$ as $k\tends \infty$. Therefore $\HD$ is complete.
 \Fqed\end{proof}

\begin{remark}[Piecewise $H^2$-regularity on $\Tp$]\label{rem:pw_H2_regularity}
As explained already in Remark~\ref{rem:notation}, a function $v\in \HD\subset H^1_D(\Om;\Tp)$ is piecewise $H^1$-regular over $\Tp$, i.e.\ $v|_K\in H^1(K)$ for all $K\in\Tp$, and $(\Npw v)|_K $ is equal to the weak gradient of $v|_K$ over $K$.
By definition, $\Npw_{x_i} v \in H^1(\Om;\Tp)$ so likewise $\Npw_{x_i} v|_K \in H^1(K)$ for all $i=1,\dots,\dim$, and hence $v|_K \in H^2(K)$ for all $K\in\Tp$ and $\Npw^2 v|_K$ equals the weak Hessian of $v|_K$ over $K$, for each $K\in\Tp$.
\end{remark}

\begin{remark}[Symmetry of the Hessians]\label{rem:symmetry}
The space $\HD$ is continuously embedded in the space $SBV^2(\Om)$, which is defined as the space of functions $v\in SBV(\Om)$ such that $\Npw v \in SBV(\Omega;\R^\dim)$ \cite{AmbrosioFuscoPallara00,FonsecaLeoniParoni05}.
There generally exists functions $v \in SBV^2(\Om)$ such that $\Dpw v \coloneqq \Npw (\Npw v)$ fails to be symmetric, see~\cite{FonsecaLeoniParoni05}.
It is thus not \emph{a priori} obvious that $\Dpw v$ should be symmetric for a general function $v\in \HD$, yet the symmetry of the Hessian is essential for the approximation theory required to construct a suitable limit space for the sequence of finite element spaces.
One of the principal contributions of our work below is a proof that $\Dpw v$ is indeed symmetric a.e.\ on $\Omega$ for all $v\in\HD$, see Corollary~\ref{cor:H2_omm_restriction} below.
We immediately note that symmetry of $\Dpw v$ over the subset $\Omp$ is a consequence of piecewise $H^2$-regularity over $\Tp$ as explained in Remark~\ref{rem:pw_H2_regularity}, so the difficulty is to show the symmetry of $\Dpw v $ on $\Omm$.
\end{remark}

\begin{figure}
\begin{center}
\includegraphics[height=2.5cm]{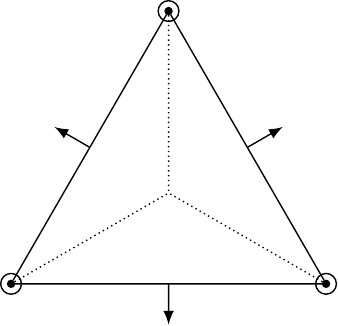}
\hspace{1cm}
\includegraphics[height=2.5cm]{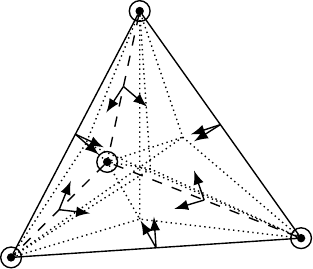}
\caption{Degrees of freedom of the cubic Hsieh--Clough--Tocher (HCT) macro-element in two (left) and three (right) space dimensions on a reference element. The basis functions are $C^1$-regular and piecewise cubic with respect to subdivisions of the element into subsimplices \cite{CloughTocher66,DouglasDupontPercellScott79,WorseyFarin87}.
Solid dots represent degrees of freedom associated to point values, the circles represent gradient values, and the arrows represent directional derivative values.}
\label{fig:HCT_elements}
\end{center}
\end{figure}

The next Theorem shows that the subspace of functions in $\HD$ that have nonvanishing jumps in the values and gradients on at most finitely many faces of $\Fp$ forms a dense subspace of $\HD$. 
This result is the key to proving the symmetry of the Hessians of functions in $\HD$.

\begin{theorem}\label{thm:finite_approx}
For each $v\in\HD$, there exists a sequence of functions $v_k\in \HD$ for all $k\in\N$ such that
\begin{equation}\label{eq:finite_approx_1}
\lim_{k\tends \infty }\norm{v-v_k}_{\HD} =0,
\end{equation}
and such that, for each $k\in\N$, there exists only finitely many faces $F\in\Fp$, respectively $F\in\Fpi$, for which $\jump{v_k}_F\neq 0$, respectively $\jump{\Npw v_k}_F\neq 0$.
Moreover, $v_k=v$, $\Npw v_k=\Npw v$ and $\Dpw v_k=\Dpw v$ a.e.\ on $\Omp_k\cup\Omm$ for each $k\in\N$, and additionally
\begin{equation}\label{eq:finite_approx2}
\lim_{k\tends \infty}\int_{\Tp} \left[h_+^{-4}\abs{v-v_k}^2 + h_{+}^{-2} \abs{\Npw(v-v_k)}^2 \right] =0.
\end{equation}
\end{theorem}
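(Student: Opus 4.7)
My plan is to adapt the strategy used for Theorem~\ref{thm:H1tp_finite_jumps} to the second-order setting, replacing continuous Lagrange interpolation with the $C^1$-conforming Hsieh--Clough--Tocher (HCT) macro-element (cf.~Figure~\ref{fig:HCT_elements}). Given $v\in\HD$, the construction will leave $v$ unchanged on the already-resolved region $\Omp_k\cup\Omm$ and replace $v$ on the ``tail'' $\Omp\setminus\Omp_k$ by a $C^1$-regular HCT-based quasi-interpolation, which kills all interior jumps inside that tail while introducing only finitely many jump faces on its boundary.

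More precisely, I would first recall from Remark~\ref{rem:pw_H2_regularity} that $v$ is piecewise $H^2$-regular over $\Tp$, so on each $K\in\Tp$ the traces of $v$, $\Npw v$, and (in three dimensions) directional derivatives on $\p K$ are well-defined elements of $L^2$. Using these traces I would define an HCT-type quasi-interpolation operator $I^{\mathrm{HCT}}\colon\HD\to C^1(\Omp)$ whose nodal degrees of freedom at vertices and face midpoints are set by Cl\'ement/Scott--Zhang-type local averages over patches of neighboring elements in $\Tp$; the averaging is the mechanism that lets the operator absorb the multi-valuedness caused by the jumps of $v$ and $\Npw v$ across faces in $\Fp$. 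Standard macro-element arguments combined with the scaled Poincar\'e and trace inequalities of Theorems~\ref{thm:poincare}--\ref{thm:trace} would then yield, for each $K\in\Tp$, the local estimate that $h_K^{-4}\norm{v-I^{\mathrm{HCT}}v}_K^2+h_K^{-2}\norm{\Npw(v-I^{\mathrm{HCT}}v)}_K^2+\norm{\Dpw(v-I^{\mathrm{HCT}}v)}_K^2$ is controlled by $\norm{\Dpw v}^2$ and by the $h_+$-weighted jump seminorms of $v$ and $\Npw v$ over a finite patch of $K$ in $\Tp$.

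With such an operator in hand, the candidate approximation is
\[
v_k(x)\coloneqq
\begin{cases}
v(x) & x\in\Omp_k\cup\Omm,\\
I^{\mathrm{HCT}}v(x) & x\in\Omp\setminus\Omp_k.
\end{cases}
\]
The finiteness of the jump set for $v_k$ follows from four observations: inside $\Omp_k$, $v_k=v$ inherits only jumps on the finite set $\Fks$; inside $\Omp\setminus\Omp_k$, $I^{\mathrm{HCT}}v$ is $C^1$ so all jumps vanish; the interface between $\Omp_k$ and $\Omp\setminus\Omp_k$ is a finite union of faces lying on $\p\Omp_k$; and the remaining interface $\p\Omp\cap\p\Omm$ has codimension at least two, because any $(\dim-1)$-face shared between a never-refined element and an eventually-refined element would itself have to be refined by conformity of the meshes, contradicting membership in $\Tp$. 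Hence this last interface carries no $\mathcal{H}^{\dim-1}$-measure and contributes nothing to the singular part of the distributional derivatives of $v_k$, and $v_k$ inherits the distributional derivative structure required by Definitions~\ref{def:H1limitspaces} and~\ref{def:HD_def}. The convergence~\eqref{eq:finite_approx_1} and the weighted estimate~\eqref{eq:finite_approx2} then follow by summing the element-wise bounds over $K\in\Tp\cap(\Omp\setminus\Omp_k)$, using $|\Omp\setminus\Omp_k|\tends 0$ from Lemma~\ref{lem:hjvanishes} together with the absolute continuity of the finite integrals defining $\norm{v}_\HD$ to conclude that the tails over $\Omp\setminus\Omp_k$ and over $\Fp\setminus\Fks$ vanish as $k\tends\infty$.

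The principal obstacle will be the design and analysis of $I^{\mathrm{HCT}}$ for functions in $\HD$ that generally have jumps on infinitely many faces of $\Fp$: the averaging pattern must be chosen so that the output is genuinely $C^1$ across every face of $\Tp$ while still satisfying the $h_+$-weighted approximation estimates above, and the summation of these local bounds over the countably infinite collection $\Tp$ has to be justified by a Fatou-type argument resting on the finiteness of $\norm{v}_\HD$. A secondary bookkeeping task is the careful verification that the piecewise definition of $v_k$ indeed produces a function whose distributional derivative has exactly the form~\eqref{eq:distderiv_h2}, with no spurious singular contributions on $\p\Omp\cap\p\Omm$ or on any face outside the finite set specified above.
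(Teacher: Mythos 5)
Your construction mirrors the paper's almost exactly: both keep $v$ unchanged on $\Omega_k^+\cup\Omega^-$ and replace it on the tail $\Omega^+\setminus\Omega_k^+$ by an HCT quasi-interpolant whose degrees of freedom are local averages over $\Tp$-patches (the paper realizes this by first taking elementwise $L^2$-projections $\pi_2 v$ onto $\mathbb{P}_2$ over $\Tp$ and then averaging the resulting nodal values, which is a concrete Cl\'ement-type operator). The use of piecewise $H^2$-regularity over $\Tp$ (Remark~\ref{rem:pw_H2_regularity}) and the summation-via-tail argument resting on the finiteness of $\norm{v}_{\HD}$ are also the same. However, there are two substantive gaps.

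First, your claim that $\p\Omp\cap\p\Omm$ has codimension at least two is false in general, and the argument you give for it does not support it. Lemma~\ref{lem:eventuallyinT+} does show that no $(\dim-1)$-face of $\Fp$ can separate $\Omp$ from $\Omm$, so the interface is indeed disjoint from $\Sp$; but the interface is in general an accumulation set of shrinking simplices and can have positive $\mathcal{H}^{\dim-1}$-measure (for instance, if refinement concentrates along a curve in $\dim=2$ then $\Omm$ can be a curve segment of positive $\mathcal{H}^1$-measure). So you cannot dismiss singular contributions of $D(v_k-v)$ on that interface as a measure-zero set. The paper resolves this differently: the verification that $Dv_k$ and $D(\Npw v_k)$ have the forms~\eqref{eq:distderiv_H100Tp}--\eqref{eq:distderiv_h2} is done by writing $\pair{D(v_k-v),\bphi}$ as a limit of integrals over the exhausting domains $\Om^+_\ell$, which creates boundary contributions on the faces $\cF_\ell^{\star}$ of $\cT_\ell^+$ that are not in $\Flp$, and then showing these boundary contributions vanish as $\ell\to\infty$. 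This step crucially relies on the quantitative weighted estimate $\int_{\Tp}h_+^{-2}\abs{v-v_k}^2<\infty$ from~\eqref{eq:finite_approx2}; the weighted estimate is what guarantees that $v_k-v$ decays fast enough near $\Omm$ for the interface contributions to vanish. Calling this ``secondary bookkeeping'' understates it: it is an essential analytic step, not dimension-counting, and your proposal as written would leave a genuine hole there.

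Second, you do not address the homogeneous Dirichlet condition at $\p\Om$. The HCT averaging pattern must be designed so that on $\p\Om$ the constructed $v_k$ has vanishing value and vanishing tangential derivative at each face, or else $v_k$ will fail to be in $H^1_D(\Om;\Tp)$ and hence in $\HD$. The paper handles this by assigning zero to value/gradient DOFs at sharp boundary vertices and projecting gradient DOFs onto the outward normal direction at flat boundary vertices (see~\eqref{eq:CT_dofs}). A generic Cl\'ement/Scott--Zhang averaging does not automatically enforce this, so you would need to build the boundary modification into the definition of $I^{\mathrm{HCT}}$ and verify that the resulting local approximation estimate~\eqref{eq:CT_quasi_bound_jump} still holds (the paper notes this requires inverse inequalities on boundary faces to control the nonvanishing tangential derivatives of $\pi_2 v$).
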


\begin{proof}
The proof is composed of four key steps.

\emph{Step~1. Construction of $v_k$.}
For each $k\in\N$, the function $v_k$ is defined as follows. First, let $v_k=v$ on $\Omm$. Then, for each $K \in \Tp$, if $K\in \Tk^+$ then let $v_k|_K=v|_K$.
Otherwise, if $K\in \Tp\setminus \Tk^+$, we define $v_k|_K$ in terms of a quasi-interpolant into the cubic HCT space, by first taking element-wise $L^2$-orthogonal projections in the neighbourhood of $K$ and then applying a local averaging of the degrees of freedom of the projections.
 We shall define $v_k$ in this manner with respect to the possibly countably infinite set of elements in $\Tp$, yet we note that the construction is entirely local to each element and its neighbours. As explained above, the neighbourhood of any element is the same as that from a finite mesh, and thus the standard techniques of analysis on finite meshes extend to the present setting. 
The analysis of local averaging operators is rather standard by now, see e.g.~\cite{GeorgoulisHoustonVirtanen11,HoustonSchotzauWihler07,KarakashianPascal03,NeilanWu19}.
For simplicity, we give the details only for $\dim=2$, whereas for $\dim=3$ we outline the main ingredients in~Remark~\ref{rem:3dHCT} below, which is handled in a similar manner using the three-dimensional cubic HCT element depicted in Figure~\ref{fig:HCT_elements}.

Let $\Vct$ denote the cubic HCT macro-element space over $K$, which consists of all $C^1(\overline{K})$-regular functions over $K$ that are piecewise cubic with respect to the barycentric refinement of $K$, see~\cite{CloughTocher66,DouglasDupontPercellScott79} and the textbook~\cite{Ciarlet02} for a full definition.
The degrees of freedom of $\Vct$ are depicted in~Figure~\ref{fig:HCT_elements} above. 
In particular $\Vct$ contains all cubic polynomials over $K$. 
For each $K^\prime\in\Tp$, let $\pi_2 v|_{K^\prime} \in \mathbb{P}_2$ denote the $L^2$-orthogonal projection of $v$ over $K^\prime$ into the space of quadratic polynomials. Thus $\pi_2 v$ is a piecewise quadratic function over $\Tp$.
Then, for each $K\in\Tp\setminus\Tkp$, we define $v_k|_K\in \Vct$ by local averaging of the degrees of freedom of $\pi_2 v$ as follows. 
Let $\mathcal{V}_K$ denote the set of vertices of $K$ and let $\mathcal{M}_K$ denote the set of mid-points of the faces of $\p K$. We call $\mathcal{V}_K\cup\mathcal{M}_K$ the set of nodes.
For a node $z \in \mathcal{V}_K\cup\mathcal{M}_K$, let $N_+(z)\coloneqq \{K^\prime\in\Tp\colon z\in K^\prime\}$ denote the set of neighbouring elements of $K$ that contain $z$, and let $\abs{N_+(z)}$ denote its cardinality.
Note that $N_+(z)\subset N_+(K)$ for any $z\in\mathcal{V}_K\cup \mathcal{M}_K$, where we recall that $N_+(K) $ is the set of neighbouring elements of $K$ in $\Tp$.
Let $\mathcal{V}_K^I$ and $\mathcal{M}_K^I$ denote the set of interior vertices and interior face-midpoints, respectively. 
We separate boundary vertices into two categories: if a vertex $z\in\mathcal{V}_K$ is on the boundary, and if all boundary faces containing $z$ are coplanar, then we say that $z$ is a flat vertex and we write $z\in\mathcal{V}_K^{\flat}$, otherwise we say that $z$ is a sharp vertex and we write $z\in\mathcal{V}_K^{\sharp}$.
We then define $v_k|_K$ for all $K\in\Tp\setminus \Tkp$ in terms of the degrees of freedom by
\begin{equation}\label{eq:CT_dofs}
\begin{aligned}
(v_k|_K)(z) &\coloneqq
\begin{cases}
\frac{1}{\abs{N_+(z)}}\sum_{K^\prime\in\N_+(z)}   (\pi_2 v|_{K^\prime})(z) &\hspace{2cm}\text{if } z\in \mathcal{V}_K^I, \\
0 &\hspace{2cm}\text{if } z \in \mathcal{V}_K^{\flat}\cup\mathcal{V}_K^{\sharp},
\end{cases}
\\
 (\nabla v_k|_K)(z) &\coloneqq \begin{cases}
\frac{1}{\abs{N_+(z)}}\sum_{K^\prime\in\N_+(z)}  \nabla(\pi_2 v|_{K^\prime})(z) &\text{if }z\in \mathcal{V}_K^{I},\\
\frac{1}{\abs{N_+(z)}}\sum_{K^\prime\in\N_+(z)} (\nabla(\pi_2 v|_{K^\prime})(z) \cdot \bn_{\p\Omega} ) \bn_{\p\Omega} &\text{if } z\in\mathcal{V}_K^{\flat}, \\
0 &\text{if }z\in\mathcal{V}_K^{\sharp},
 \end{cases}
 \\
(\nabla v_k|_K)(z)\cdot \bn_F &\coloneqq
\begin{cases}
\frac{1}{\abs{N_+(z)}}\sum_{K^\prime\in\N_+(z)}  (\nabla(\pi_2 v|_{K^\prime})(z)\cdot \bn_F) &\hspace{0.75cm}\text{if }z\in\mathcal{M}_K^I,\\
\nabla (\pi_2 v|_{K})(z)\cdot \bn_F &\hspace{0.75cm}\text{if }z\in \mathcal{M}_K\setminus\mathcal{M}_K^I,
\end{cases}
\end{aligned}
\end{equation}
where, in the notation above, $\bn_F$ is the chosen unit normal for the face $F$ containing the edge-midpoint $z\in\mathcal{M}_K$, and $\bn_{\p\Omega}$ denotes the unit outward normal to $\Omega$ at $z$ if $z\in\mathcal{V}_K^{\flat}$.

Still considering $K\in\Tp\setminus\Tkp$, it follows that $v_k|_K \in C^1(\overline{K})\cap H^2(K)$, and that, for any boundary face $F \subset \p K$, $\jump{v_k}_F=v_k|_F=0$ owing to the vanishing values and vanishing first tangential derivatives at both boundary vertices on $F$. Moreover, if $K^\prime \in N_+(K)$ is a neighbouring element such that $K^\prime \in \Tp\setminus\Tkp$, then we note that all degrees freedom of $v_k|_{K^\prime}$ and $v_k|_{K}$ that belong to their common face $F$ coincide by definition, which implies that $\jump{v_k}_F =0 $ and $\jump{\Npw v_k}_F=0$.
Furthermore, following standard techniques involving inverse inequalities, see e.g.\ \cite{KarakashianPascal03}, we obtain the bound
\begin{equation}\label{eq:CT_quasi_bound_jump}
 \sum_{m=0}^2 \int_K h_+^{2m-4}\abs{\nabla^m( \pi_2v - v_k  ) }^2 \lesssim \int_{\Fpi_K}h_+^{-1}\abs{\jump{\Npw \pi_2 v}}^2 + \int_{\Fp_K} h_+^{-3}\abs{\jump{\pi_2 v}}^2 ,
\end{equation}
where $\Fp_K=\{F\in\Fp\colon F\cap K\neq \emptyset\}$ and $\Fpi_K\coloneqq \Fp_K\cap \Fpi$ are sets of faces adjacent to $K$. 
Note that~\eqref{eq:CT_quasi_bound_jump} corresponds to the generalisation of \cite[Lemma~3]{NeilanWu19} to fully discontinuous polynomials, and the only additional step required to obtain \eqref{eq:CT_quasi_bound_jump} beyond what is shown already in \cite{NeilanWu19} is the application of inverse inequalities on boundary faces to handle non-vanishing tangential derivatives of $\pi_2 v$.
Recalling that $v\in \HD$ is $H^2$-regular on each element of $\Tp$, we infer from the application of the triangle inequality that
\begin{multline}\label{eq:CT_quasi_interpolant_bound}
\sum_{m=0}^2\int_K h_+^{2m-4} \abs{\nabla^m(v-v_k)}^2 
\lesssim  \sum_{m=0}^2\int_K h_+^{2m-4} \left[\abs{\nabla^m(v-\pi_2v)}^2+\abs{\nabla^m(\pi_2 v - v_k)}^2\right]\\
\lesssim \sum_{m=0}^2\int_K h_+^{2m-4} \abs{\nabla^m(v-\pi_2v)}^2 + \int_{\Fpi_K}h_+^{-1}\abs{\jump{\Npw \pi_2 v}}^2 + \int_{\Fp_K} h_+^{-3}\abs{\jump{\pi_2 v}}^2 
 \\ \lesssim \int_{N_+(K)} \abs{\Dpw v}^2 + \int_{\Fpi_K} h_+^{-1}\abs{\jump{\Npw v}}^2 + \int_{\Fp_K} h_+^{-3}\abs{\jump{v}}^2,
\end{multline}
where in passing from the first to the second line we have applied~\eqref{eq:CT_quasi_bound_jump}, and in passing from the second to the third line we have applied a further triangle inequality $\pi_2 v = \pi_2 v -v + v$ along with trace inequalities and the Bramble--Hilbert Lemma applied to $v-\pi_2 v$.

\emph{Step~2. Proof that $v_k$ has at most finitely many nonvanishing jumps.}
We now show that $v_k$ has nonvanishing jumps on at most finitely many faces of $\Fp$ and $\Npw v_k$ has nonvanishing jumps on at most finitely many interior faces in $\Fpi$.
It is clear that $\jump{v_k}_F=0$ for all boundary faces $F\in \Fp \setminus\mathcal{F}_k^+$, because any boundary face $ F\in \Fp\setminus\Fkp $ must be a face of an element of $K\in \Tp\setminus \Tk^+$.
So there are only finitely many boundary faces where $\jump{v_k}$ does not vanish. To study interior faces, Lemma~\ref{lem:eventuallyinT+} implies that there is $\ell\in \N$, $\ell=\ell(k)\geq k$ such that $\Tk^+\subset \cT_{\ell}^{1+}$.
Then, consider a face  $F\in \Fpi \setminus \mathcal{F}_{\ell}^{\dagger}$, recalling the notation in~\eqref{eq:Fks_def}, and consider the elements $K$, $K^\prime \in \Tp$ forming $F$, i.e.\ $F=K\cap K^\prime$.
If either of $K$ or $K^\prime$ is in $\cT_k^+\subset \cT^{1+}_\ell$ then both must be in $\mathcal{T}_{\ell}^+$ and thus $F$ would have to be a face of $\mathcal{F}_\ell^{\dagger}$ by~\eqref{eq:Fks_def}, which would be a contradiction.
Therefore we have both $K$, $K^\prime \in \Tp\setminus\mathcal{T}_k^+$.
Then the definition of $v_k$ on $K$ and $K^\prime$ above implies that the degrees of freedom of $v_k$ coincide on $F$, so $\jump{v_k}_F=0$ and $\jump{\Npw v_k}_F=0$ for all $F\in \Fpi\setminus \cF_{\ell}^{\dagger}$.
Since there are at most only finitely many faces in $\mathcal{F}_{\ell}^{\dagger}$ we conclude that $\jump{v_k}=0$ and $\jump{\Npw v_k}=0$ except for at most finitely many faces of $\Fp$ and $\Fpi$, respectively.

\emph{Step~3. Proof of \eqref{eq:finite_approx2} and of convergence of jumps.} We now consider the convergence of the $v_k$ to $v$ over $\Omp$.  
Recall that $v=v_k$ on $\Om_k^{+}\cup \Omm$ by definition.
Furthermore, if $K\in\Tp\setminus \Tk^+$ then $N_+(K) \subset \Tp\setminus\Tk^{1+}$ because if $K$ has a neighbour in $\Tk^{1+}$ then $K$ itself must be in $\Tk^+$.
 Therefore, it follows from~\eqref{eq:CT_quasi_interpolant_bound} that
\begin{multline}\label{eq:finite_approx_volume_terms}
\sum_{m=0}^{2}\int_{\Tp} h_+^{2m-4} \abs{\Npw^{m}(v-v_k)}^2 = \sum_{m=0}^2 \int_{\Tp\setminus\Tk^+} h_+^{2m-4} \abs{\Npw^m (v-v_k)}^2
\\ \lesssim \int_{\Tp\setminus \Tk^{1+}} \abs{\Dpw v}^2 + \int_{\Fpi\setminus\Fk^{1\dagger}} h_+^{-1}\abs{\jump{\Npw v}}^2 + \int_{\Fp\setminus \Fk^{1\dagger}} h_+^{-3}\abs{\jump{v}}^2 ,
\end{multline}
where $\Fk^{1\dagger} $ denotes the set of all faces whose parent elements are in $\Tk^{1+}$.
Since $\Fp = \bigcup_{k\in \N}\Fk^{1\dagger}$ and since $\Tp = \bigcup_{k\in\N} \Tk^{1+}$ as a consequence of Lemma~\ref{lem:eventuallyinT+}, we see that the right-hand side in~\eqref{eq:finite_approx_volume_terms} tends to zero as $k\tends\infty $ as it is the tail of a convergent series. In particular, this proves \eqref{eq:finite_approx2}.

We now prove that
\begin{equation}\label{eq:finite_approx_jump_convergence}
\lim_{k\tends \infty}\left(\int_{\Fpi}h_+^{-1}\abs{\jump{\Npw(v-v_k)}}^2 + \int_{\Fp}h_+^{-3}\abs{\jump{v-v_k}}^2\right) =0.
\end{equation}
Recalling that $v_k=v$ on $\Tp$, we see $\jump{v-v_k}_F=0$ for all $F\in \Fks$.
Moreover, if $F\in \Fp\setminus \Fks$, then $F$ must be a face of at least one element of $\Tp\setminus \Tkp$. Also, if $F=K\cap K^\prime$ for some $K\in \Tp\setminus \Tkp$ and some $K^\prime \in \Tkp$, then the trace contribution to the jump from $K^\prime$ must vanish.
Therefore, after a counting argument, we can apply the trace inequality, which is applicable since $(v-v_k)|_K\in H^2(K)$ for all $K\in\Tp$, and the bound~\eqref{eq:CT_quasi_interpolant_bound} to obtain
\[
\begin{aligned}
\int_{\Fp}h_+^{-3}\abs{\jump{v-v_k}}^2 &= \int_{\Fp \setminus \Fks} h_+^{-3} \jump{v-v_k}^2  \lesssim \sum_{K\in\Tp\setminus \Tk^+} \int_{\p K} h_+^{-3} \abs{v-v_k}^2 \\
&\lesssim \int_{\Tp\setminus \Tk^+ }h_+^{-2}\abs{\Npw (v-v_k)}^2 + h_+^{-4}\abs{v-v_k}^2 \tends 0 \quad\text{as }k\tends \infty
\end{aligned}
\]
where convergence follows from~\eqref{eq:finite_approx2} which was already shown above. A similar argument restricted to interior faces can be applied to the jumps of gradients, thus yielding~\eqref{eq:finite_approx_jump_convergence}.

\emph{Step~4. Proof of $v_k\in \HD$ and of \eqref{eq:finite_approx_1}.}
Since the piecewise gradient and Hessian coincide with the classical gradient and Hessian on each $K\in\Tp$, the bound~\eqref{eq:CT_quasi_interpolant_bound} and a counting argument implies that $\int_{\Tp}\abs{v_k}^2+\abs{\Dp{v_k}}^2+\abs{\Hessp{v_k}}^2\lesssim \norm{v}_{\HD}^2<\infty$. Furthermore, we get $\int_{\Fpi}h_+^{-1}\jump{\Dp{v_k}}^2+\int_{\Fp}h_+^{-3}\jump{v_k}^2 < \infty$ since $\jump{v_k}=0$ and $\jump{\Dp{v_k}}=0$ except for at most finitely many faces.
After extending $v_k$ by zero to $\R^\dim$, the distributional derivative of $v_k$ satisfies $
\pair{ Dv_k,\bphi}_{\R^\dim} = \langle Dv,\bphi \rangle_{\R^\dim} + \langle D(v_k-v),\bphi \rangle_{\R^\dim}$ for all $\bphi\in \CidR$.
Note that $v-v_k$ is nonvanishing only on $\Tp\setminus \Tkp\subset \Tp$ and $v$ and $v_k$ are both in $H^2(K)$ for each $K\in\Tp$.
For each $\ell\in\N$, let $\cF^{\star}_\ell$ denote the set of faces of all elements in $\cT_{\ell}^+$ that are not in $\Flp$; note that any element of $\cT_{\ell}^{+}$ containing a face in $\cF^{\star}_\ell$ is necessarily in $\cT_{\ell}^{+}\setminus \cT_{\ell}^{1+}$.
For shorthand, for each $F\in \cF^{\star}_{\ell}$, let $\tau^{\ell}_F$ denote the trace operator from the side of $\Om_{\ell}^+$, and note that $\tau^{\ell}_F=\tau_F^{\pm}$ depending on the orientation of $\bn_F$.
Then, using elementwise integration by parts, we find that
\begin{equation}\label{eq:finite_approx_4}
\begin{split}
&\langle D(v_k-v),\bphi \rangle_{\R^\dim} = - \int_{\Omp}(v_k-v)\Div \bphi = - \lim_{\ell\tends \infty} \int_{\Omp_{\ell}} (v_k-v)\Div \bphi 
 \\ &= \lim_{\ell\tends \infty} \left(\int_{\Omp_{\ell}} \Npw(v_k-v) \cdot \bphi - \int_{\Flp} \jump{v_k-v}(\bphi\cdot \bn) - \int_{\cF^{\star}_\ell} \tau^\ell_F(v_k-v)(\bphi\cdot \bn) \right) 
 \\ & = \int_{\Omp} \Npw(v_k-v) \cdot \bphi - \int_{\Fp} \jump{v_k-v}(\bphi\cdot \bn),
 \end{split}
\end{equation}
where in passing from the second to the third lines, we have used the convergence as $\ell\tends \infty$ of the first two terms in the second line, which follows from finiteness of $\int_{\Omp}\abs{\nabla (v_k-v)}^2+\int_{\Fp}h_+^{-1}\abs{\jump{v_k-v}}^2<\infty$, and we have used the fact that the remainder term $\int_{\cF^{\star}_\ell}\tau_F^{\ell} (v_k-v)(\bphi\cdot \bn)\tends 0$ as $\ell\tends \infty$ as a result of the Cauchy--Schwarz inequality, the trace inequality and the bound
\[
\lim_{\ell\tends \infty}\int_{\cF^{\star}_\ell}\abs{\tau^{\ell}_F(v_k-v)}\lesssim \lim_{\ell\tends \infty} \left(\int_{\Tp\setminus \cT_{\ell}^{1+}} \left[\abs{\nabla(v-v_k)}^2+h_+^{-2}\abs{v-v_k}^2 \right]  \right)^{\frac{1}{2}} =0,
\]
which crucially uses the finiteness $\int_{\Tp}h_+^{-2}\abs{v-v_k}^2<\infty$ as a result of~\eqref{eq:finite_approx2}. 
Hence, by addition and subtraction, we use~\eqref{eq:distderiv_H100Tp} for $\pair{Dv,\bphi}_{\R^\dim}$ and~\eqref{eq:finite_approx_4} to obtain
\[
\begin{aligned}
\langle Dv_k,\bphi\rangle_{\R^\dim} = \int_{\Omp} \Npw v_k \cdot \bphi + \int_{\Omm}\Npw v \cdot \bphi - \int_{\Fp} \jump{v_k}\bphi\cdot \mathbf{n} &&& \forall \bphi \in \CidR,
\end{aligned}
\]
which shows that $v_k$ satisfies~\eqref{eq:distderiv_H100Tp} and also that $\Npw v_k = \Npw v$ on $\Omm$. Therefore $v_k\in H^1_D(\Om;\Tp)$ for each $k\in \N$.
The same argument as above can now be applied to each of the components of $\Npw v_k$, since $\Npw v_k = \Npw v$ on $\Omp_k\cup \Omm$ and since $\int_{\Tp}h_+^{-2}\abs{\Npw(v_k-v)}^2<\infty$ for all $k\in\N$ by~\eqref{eq:finite_approx2}. This yields
\begin{equation}\label{eq:finite_approx_5}
\langle D(\Dp{v_k}),\bvphi\rangle_{\Om} = \int_{\Omp} \Hessp v_k:\bvphi +  \int_{\Omm} \Hessp v:\bvphi- \int_{\Fpi} \jump{\Dp{v_k}}\cdot(\bvphi\mathbf{n}),
\end{equation}
for all $\bvphi\in \Cidd$, thus showing that $v_k$ satisfies~\eqref{eq:distderiv_h2}, that $\Dpw v_k$ equals the piecewise Hessian of $v_k$ over the elements $\Tp$ and that $\Dpw v_k = \Dpw v$ on $\Omp_k \cup \Omm$.
These identities along with the bounds in~\eqref{eq:finite_approx_1},~\eqref{eq:finite_approx2}, \eqref{eq:finite_approx_volume_terms}, and \eqref{eq:finite_approx_jump_convergence} show that $\norm{v_k}_{\HD}<\infty$ and thus $v_k \in \HD$ for each $k\in\N$, and that $\norm{v-v_k}_{\HD}\tends 0$ as $k\tends \infty$.
\Fqed\end{proof}

\begin{remark}[HCT element for $\dim=3$]\label{rem:3dHCT}
In the case $\dim=3$, we consider the generalization of the HCT element due to Worsey and Farin~\cite{WorseyFarin87}, which is based on the subdivision of each tetrahedra into twelve sub-tetrahedra following the incentre splitting algorithm detailed in \cite[p.~108]{WorseyFarin87}. Possible degrees of freedom on a reference element are depicted in Figure~\ref{fig:HCT_elements}, although we note that these elements are not affine equivalent.
A set of degrees of freedom on each physical element that extends the two-dimensional case includes the function value and gradient value at every vertex, and, for each edge, the orthogonal projection of the gradient values at edge midpoints into the plane orthogonal to the edge.
The generalization of~\eqref{eq:CT_dofs} on the averaging of the degrees of freedom is then similar to that in~\cite{NeilanWu19}.
\end{remark}

\emph{Proof of Theorem~\ref{thm:H1tp_finite_jumps}.} The proof of Theorem~\ref{thm:H1tp_finite_jumps} is similar to the proof just given for Theorem~\ref{thm:finite_approx}, where the only main difference is that the quasi-interpolation operator used in Theorem~\ref{thm:finite_approx} is replaced by a nodal quasi-interpolant into piecewise polynomials over $\Tp\setminus\Tkp$ that enforces $C^0$-continuity on all but finitely many faces of $\Fp$ (for instance, it is enough to consider piecewise affine approximations).
If $v\in H^1_D(\Om;\Tp)$ then the quasi-interpolant also enforces a homogeneous Dirichlet boundary condition on $\p\Om$, whereas this is not needed for functions $v\in H^1(\Om;\Tp)$.
We leave the remaining details of the proof to the reader. \qed

\begin{corollary}[Symmetry of the Hessian]\label{cor:H2_omm_restriction}
For every $v\in\HD$ there exists a $w\in H^2(\Om)\cap H^1_0(\Om)$ such that $v=w$, $\Npw v=\nabla w$, and $\Dpw v=\Dpw  w$ a.e.\ on~$\Omm$.
Furthermore,  $\Dpw v$ is symmetric a.e.\ on $\Om$ for all $v\in\HD$.
\end{corollary}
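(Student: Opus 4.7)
The plan is to adapt the strategy of Corollary~\ref{cor:H1_omm_restriction}, replacing Theorem~\ref{thm:H1tp_finite_jumps} with the stronger Theorem~\ref{thm:finite_approx}. First I would invoke Theorem~\ref{thm:finite_approx} to produce a sequence $\{v_k\}\subset\HD$ with $v_k\to v$ in $\HD$ such that, for each $k\in\N$, the functions $v_k$ and $\Npw v_k$ have nonvanishing jumps only on finitely many faces of $\Fp$ and $\Fpi$, respectively, and $v_k=v$, $\Npw v_k=\Npw v$, $\Dpw v_k=\Dpw v$ a.e.\ on $\Omm$. Fixing any $k\in\N$, the ascending family $\{\Flp\}_{\ell\in\N}$ with $\Fp=\bigcup_{\ell\in\N}\Flp$ provides $m\in\N$ such that every face supporting a nonvanishing jump of $v_k$ or $\Npw v_k$ is contained in $\Omp_m$. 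Exactly as in Corollary~\ref{cor:H1_omm_restriction}, Lemma~\ref{lem:eventuallyinT+} ensures that $\Omp_m$ and $\overline{\Omm}$ are disjoint compact subsets of $\R^\dim$, so I can pick $\eta\in C^\infty_0(\R^\dim)$ with $\eta\equiv 1$ on an open neighbourhood of $\overline{\Omm}$ and $\eta\equiv 0$ on an open neighbourhood of $\Omp_m$. My candidate is then $w\coloneqq \eta v_k$, extended by zero outside $\Om$.

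The bulk of the work would be to verify that $w\in H^2(\Om)\cap H^1_0(\Om)$. The $H^1_0(\Om)$-regularity and the identity $\Npw w=\eta\Npw v_k+v_k\Npw\eta$ follow from the product-rule argument of Corollary~\ref{cor:H1_omm_restriction}, since $\eta$ vanishes on every face where $\jump{v_k}\neq 0$. For the Hessian, I would test $D(\Npw w)$ against an arbitrary $\bvphi\in C^\infty_0(\R^\dim;\R^{\dim\times\dim})$ and use the tensor identity $\eta\,\Div\bvphi=\Div(\eta\bvphi)-\bvphi\Npw\eta$ to reduce the calculation to $\langle D(\Npw v_k),\eta\bvphi\rangle_\Om$ plus $L^2$-bounded lower-order cross terms. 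Applying identity~\eqref{eq:distderiv_h2} for $v_k$, the singular contribution on $\Fpi$ is weighted by $\eta$, which annihilates it because $\eta$ is identically zero on every face of $\Fpi$ where $\jump{\Npw v_k}\neq 0$. A parallel calculation on the $v_k\Npw\eta$ piece then shows that $D(\Npw w)$ is absolutely continuous, with density
\begin{equation*}
\Dpw w=\eta\Dpw v_k+v_k\Dpw\eta+\Npw\eta\otimes\Npw v_k+\Npw v_k\otimes\Npw\eta
\end{equation*}
lying in $L^2(\R^\dim;\R^{\dim\times\dim})$ and compactly supported, which yields $w\in H^2(\R^\dim)$ and hence $w\in H^2(\Om)\cap H^1_0(\Om)$.

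The required pointwise identities on $\Omm$ are then immediate: $\eta\equiv 1$ on a neighbourhood of $\overline{\Omm}$ forces $\Npw\eta=\Dpw\eta=0$ there, and combined with $v_k=v$, $\Npw v_k=\Npw v$, $\Dpw v_k=\Dpw v$ a.e.\ on $\Omm$ from Theorem~\ref{thm:finite_approx}, substitution into the product formulas gives $w=v$, $\Npw w=\Npw v$, $\Dpw w=\Dpw v$ a.e.\ on $\Omm$. For the symmetry of $\Dpw v$ a.e.\ on $\Om$: piecewise $H^2$-regularity over $\Tp$ (Remark~\ref{rem:pw_H2_regularity}) handles $\Omp$, and on $\Omm$ the identity $\Dpw v=\Dpw w$ with $w\in H^2(\Om)$ yields symmetry since the weak Hessian of an $H^2$ function is always symmetric.

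I expect the main technical point to be the rigorous product-rule computation for $D(\Npw w)$: the delicate feature is that $\eta$ must vanish on an open \emph{neighbourhood} of the finitely many jump faces (not merely on the faces themselves), so that both $\eta$ and $\Npw\eta$ annihilate the singular integrals produced by~\eqref{eq:distderiv_h2}. The key leverage provided by Theorem~\ref{thm:finite_approx} is precisely the reduction to finitely many jump faces in both values and gradients, which is exactly what makes this cutoff construction viable and thereby rules out the pathological non-symmetric-Hessian functions allowed by the general $SBV^2(\Om)$ framework mentioned in Remark~\ref{rem:symmetry}.
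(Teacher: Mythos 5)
Your proposal takes essentially the same route as the paper: you invoke Theorem~\ref{thm:finite_approx} to reduce to functions $v_k$ with finitely many jump faces, fix $m$ so all such faces lie in $\cF^{\dagger}_m\subset\Omp_m$, build a cutoff $\eta$ separating the disjoint compacta $\Omp_m$ and $\overline{\Omm}$, set $w=\eta v_k$, and run the product rule through~\eqref{eq:distderiv_H1Tp}--\eqref{eq:distderiv_h2} to kill the jump integrals and identify $\Dpw w = \eta\Dpw v_k + \Npw v_k\otimes\nabla\eta + \nabla\eta\otimes\Npw v_k + v_k\Dpw\eta\in L^2$, which is exactly the paper's calculation. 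Your one genuine refinement is to insist that $\eta$ vanish on an open \emph{neighbourhood} of $\Omp_m$ (and equal $1$ on a neighbourhood of $\overline{\Omm}$) so that $\nabla\eta$ also annihilates the singular integrals coming from the $v_k\nabla\eta$ piece; the paper only states $\eta|_{\Omp_m}=0$, and while that suffices in this geometry — every interior face $F=K\cap K'$ with $K,K'\in\Tk^+_m$ lies in the topological interior of $\Omp_m$, so $\eta\equiv 0$ on $\Omp_m$ already forces $\nabla\eta|_F=0$ — your stronger support condition makes the cancellation manifest without relying on that geometric observation, which is a small but worthwhile clarification.
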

\begin{proof}
If $\Omm$ is empty then there is nothing to show, since $\Dpw v$ is symmetric a.e. on $\Omp$ as shown in Remark~\ref{rem:symmetry}. Therefore, we consider the case where $\Omm$ is nonempty. The proof follows the same path as the proof of~Corollary~\ref{cor:H1_omm_restriction}: choose $k\in \N$ and let $v_k \in \HD$ be given by Theorem~\ref{thm:finite_approx}. Then, by Lemma~\ref{lem:eventuallyinT+}, there exists $m=m(k)$ such that $v_k$ has possible nonzero jumps only on $\cF^{\dagger}_m$, i.e.\ $\jump{v_k}_F=0$ for every face $F\in \Fp\setminus \cF^{\dagger}_m$ and $\jump{\Npw v_k}_F=0$ for every $F\in \Fpi \setminus \cF^{I\dagger}_m$. Then, as shown in the proof of Corollary~\ref{cor:H1_omm_restriction}, there exists $\eta \in C^\infty_0(\R^\dim)$ such that $\eta|_{\Omm}=1$ and $\eta|_{\Omp_m}=0$. Then, define $w(x) \coloneqq \eta(x) v_k(x) $ for all $x\in \R^\dim$, where we recall that $v_k$ is extended by zero outside of $\overline{\Om}$. The same arguments in the proof of Corollary~\ref{cor:H1_omm_restriction} imply that $w\in H^1_0(\Om)$ and that $\nabla w = \eta \Npw v_k + v_k \nabla \eta$, and moreover that $\nabla w=\Npw v$ a.e.\ on $\Omm$.
We now show that also $w\in H^2(\Om)$ so that $w\in H^2(\Om)\cap H^1_0(\Om)$. Considering an arbitrary $\bvphi\in \Cidd$, a straightforward calculation using the known distributional derivatives of $v_k$ and $\Npw v_k$ shows that
\[
\begin{aligned}
&\langle D(\nabla w),\bvphi\rangle = -\int_\Om\nabla w\cdot(\Div\bvphi) = -\int_\Om(\eta\Dp{v_k}+v_k\nabla\eta)\cdot(\Div\bvphi)\\
& = -\int_\Om \left[\Npw v_k {\cdot} \Div(\eta\bvphi) - \left(\Npw v_k{\otimes}\Npw \eta\right) {:} \bvphi  + v_k \Div(\nabla \eta^\top \bvphi) - v_k \nabla^2\eta {: }\bvphi \right]\\
& = \int_{\Om} \left[ \eta \nabla^2 v_k + \Npw v_k \otimes \nabla \eta + \Npw \eta \otimes \Npw v_k + v_k \Dpw \eta \right]: \bvphi
\end{aligned}
\]
where the last equality above follows from~\eqref{eq:distderiv_h2} and~\eqref{eq:distderiv_H1Tp}, where it is noted that all terms involving jumps vanish owing to the facts that $\bvphi$ vanishes on $\partial\Om$, that $\eta$ vanishes on every face $F\in \cF^{\dagger}_m$, and the fact that $v_k$ and $\Npw v_k$ have possible nonzero jumps only on $\cF^{\dagger}_m$ as explained above.
Thus, $\Dpw w= \eta \nabla^2 v_k + \Npw v_k \otimes \nabla \eta + \Npw \eta \otimes \Npw v_k + v_k \Dpw \eta$ and $w\in H^2(\Om) \cap H^1_0(\Om)$.
 Furthermore, we see that $\nabla^2 w = \Dpw v_k = \Dpw v $ a.e.\ in $\Omm$. Since $\Dpw w$ is symmetric owing to $w\in H^2(\Om)$, we see that $\Dpw v$ is symmetric a.e.\ in $\Omm$. Since $\Dpw v$ is also symmetric a.e.\ on $\Omp$, as shown in Remark~\ref{rem:symmetry}, we conclude that $\Dpw v$ is symmetric a.e.\ on~$\Om$. 
\Fqed\end{proof}

Since Corollary~\ref{cor:H2_omm_restriction} shows that functions $v\in \HD$ have symmetric Hessians, we may now write $\Dpw_{x_ix_j}v\coloneqq (\Dpw v)_{ij}$, with symmetry giving $\Dpw_{x_ix_j}v=\Dpw_{x_jx_i} v$ for all $i,\,j\in\{1,\dots,\dim\}$.  
The symmetry of the Hessians of functions in $\HD$ shown in Corollary~\ref{cor:H2_omm_restriction} crucially allows for the construction of good polynomial approximations over the meshes, including over elements that are eventually refined. Recall that the set $\Fp_\circ(K)$, for any element $K$, is defined in~\eqref{eq:fp_circ}.
\begin{lemma}[Approximation by quadratic polynomials]\label{lem:p2_element_approx}
For every function $v\in \HD$, and every $K\in\Tk$, $k\in\N$, we have
\begin{multline}\label{eq:p2_element_approx}
\inf_{\hat{v}\in \mathbb{P}_2}\sum_{m=0}^2 \int_K h_K^{2m-4} \abs{\nabla^m(v-\hat{v})}^2 \\ \lesssim \int_K \abs{\Dpw v-\overline{\Dpw v|_K}}^2 + \int_{\Fp_\circ(K)} \left[ h_+^{-1} \abs{\jump{\Npw v}}^2+h_K^{-2}h_+^{-1}\abs{\jump{v}}^2\right],
\end{multline}
where $\mathbb{P}_2$ denotes the space of quadratic polynomials, and where $\overline{\Dpw v|_K} \in \R^{\dim\times\dim}$ denotes the component-wise mean-value of $\Dpw v$ over $K$, i.e.\ $\bigr[\overline{\Dpw v|_K}\bigr]_{ij}=\frac{1}{\abs{K}}\int_K \Dpw_{x_i x_j} v$ for all $i,\,j\in\{1,\dots,\dim\}$. 
\end{lemma}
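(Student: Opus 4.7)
The plan is to construct an explicit quadratic polynomial $\hat v \in \mathbb{P}_2$ whose Hessian matches the mean Hessian of $v$ on $K$, and then bound the three derivative orders of the error by applying the Poincar\'e inequality of Theorem~\ref{thm:poincare} twice, once to the components of $\nabla v$ and once to $v$ itself. The central enabling fact is the symmetry of $\Dpw v$ proved in Corollary~\ref{cor:H2_omm_restriction}: without it, a quadratic polynomial realising the constant Hessian $\overline{\Dpw v|_K}$ would not exist.

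First I would fix the centroid $x_K$ of $K$ and define
\begin{equation*}
\hat v(x) \coloneqq \tfrac{1}{2}(x-x_K)^\top\overline{\Dpw v|_K}(x-x_K) + \overline{\Npw v|_K}\cdot(x-x_K) + \overline{v|_K}.
\end{equation*}
By Corollary~\ref{cor:H2_omm_restriction} the matrix $\overline{\Dpw v|_K}$ is symmetric, so $\hat v \in \mathbb{P}_2$ is well-defined and satisfies $\Dpw\hat v \equiv \overline{\Dpw v|_K}$, $\overline{\Npw\hat v|_K} = \overline{\Npw v|_K}$, and $\overline{\hat v|_K} = \overline{v|_K}$, where the last two identities use $\overline{(x-x_K)|_K} = 0$. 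The second-order bound is then immediate, since $\Dpw(v-\hat v) = \Dpw v - \overline{\Dpw v|_K}$.

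For the first-order bound, I would apply Theorem~\ref{thm:poincare} componentwise to the auxiliary function $w_i \coloneqq \Npw_{x_i}v - (\overline{\Dpw v|_K})_i\cdot(x-x_K)$, which lies in $H^1(\Om;\Tp)$ since $\Npw_{x_i}v$ does and the subtracted term is smooth. Its mean on $K$ equals $(\overline{\Npw v|_K})_i$, so $w_i - \overline{w_i|_K} = (\Npw(v-\hat v))_i$, while $\Npw w_i$ equals the $i$-th row of $\Dpw v-\overline{\Dpw v|_K}$ and $\jump{w_i}_F = \jump{\Npw_{x_i}v}_F$ on every face $F\in\Fp_\circ(K)$ since the smooth correction carries no jump. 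Summing the Poincar\'e bound over $i\in\{1,\dots,\dim\}$ yields
\begin{equation*}
h_K^{-2}\int_K\abs{\Npw(v-\hat v)}^2 \lesssim \int_K\abs{\Dpw v - \overline{\Dpw v|_K}}^2 + \int_{\Fp_\circ(K)}h_+^{-1}\abs{\jump{\Npw v}}^2.
\end{equation*}

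Finally, for the zeroth-order bound I would apply Theorem~\ref{thm:poincare} to $v-\hat v \in H^1(\Om;\Tp)$, which has mean zero on $K$ by construction and whose jumps on $\Fp_\circ(K)$ coincide with those of $v$. This gives $h_K^{-2}\int_K|v-\hat v|^2 \lesssim \int_K|\Npw(v-\hat v)|^2 + \int_{\Fp_\circ(K)}h_+^{-1}|\jump{v}|^2$; multiplying through by $h_K^{-2}$ and substituting the first-order bound produces exactly the remaining $h_K^{-2}h_+^{-1}|\jump{v}|^2$ term in~\eqref{eq:p2_element_approx}. The main conceptual obstacle is the need for a symmetric $\overline{\Dpw v|_K}$, handled upstream by Corollary~\ref{cor:H2_omm_restriction}; everything else is bookkeeping with two applications of the Poincar\'e inequality on the possibly-subdivided element $K$.
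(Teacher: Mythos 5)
Your construction and route are essentially the same as the paper's: build a quadratic $\hat v$ whose Hessian equals $\overline{\Dpw v|_K}$ (relying crucially on the symmetry from Corollary~\ref{cor:H2_omm_restriction}), arrange zero means of the first- and zeroth-order errors, and iterate the Poincar\'e inequality of Theorem~\ref{thm:poincare}. The first- and second-order bounds go through as you describe. However, there is a genuine error in the zeroth-order step: you claim $\overline{\hat v|_K} = \overline{v|_K}$, and hence that $v-\hat v$ has zero mean on $K$, but this is false. Writing $y = x - x_K$ and $\bm H = \overline{\Dpw v|_K}$, the linear term in $\hat v$ averages to zero over $K$, but the quadratic term does not:
\begin{equation*}
\overline{\hat v|_K} = \overline{v|_K} + \tfrac{1}{2}\,\overline{\,y^\top \bm H\, y\,|_K}
= \overline{v|_K} + \tfrac{1}{2}\sum_{i,j}\bm H_{ij}\,\overline{\,y_i y_j\,|_K},
\end{equation*}
and the second-moment correction $\sum_{i,j}\bm H_{ij}\,\overline{\,y_i y_j\,|_K}$ is in general nonzero. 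So the Poincar\'e inequality applied to $v-\hat v$ controls $\int_K\lvert (v-\hat v)-\overline{(v-\hat v)|_K}\rvert^2$, not $\int_K\lvert v-\hat v\rvert^2$, and the final display you write does not follow from what you have constructed. The fix is immediate and is exactly what the paper does: replace the constant $\overline{v|_K}$ in your $\hat v$ by $\overline{v|_K} - \tfrac{1}{2}\,\overline{\,y^\top\bm H\,y\,|_K}$ (equivalently, choose the constant term so that $\int_K(v-\hat v)=0$), which leaves $\Npw\hat v$ and $\Dpw\hat v$ unchanged and hence the other two bounds intact. Since the statement takes an infimum over $\mathbb P_2$, adding any constant is admissible, so the final estimate is correct once this constant is repaired; as written, though, the mean-zero claim is a real error in your argument.
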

\begin{proof}
We construct a polynomial $\hat{v}\in \mathbb{P}_2(K)$ such that 
\begin{equation}\label{eq:p2_element_approx_1}
\int_K (v-\hat{v}) = \int_K \Npw_{x_i} (v-\hat{v}) = \int_K  \Dpw_{x_ix_j} (v-\hat{v}) =0, \quad \forall i,\,j \in \{1,\dots,\dim\},
\end{equation}
which implies that $\Dpw \hat{v} = \overline{\Dpw v|_K}$ since $\hat{v}$ is a quadratic polynomial.
For shorthand, let $\bm{H}\coloneqq \overline{\Dpw v|_K} \in \R^{\dim\times\dim}$, and note that $\bm{H}$ is symmetric owing to the symmetry of $\Dpw v$ as shown by Corollary~\ref{cor:H2_omm_restriction}.
Then, define the vector $\bm{d}\in \R^\dim$ by $\bm{d}=\frac{1}{\abs{K}}\int_K\left[\Npw v - \bm{H} x\right]\mathrm{d} x$, where the integral is taken component-wise, and let the constant $a$ be defined by $a \coloneqq \frac{1}{\abs{K}}\int_K \left[v- \bm{d}\cdot x - \frac{1}{2} x^\top \bm{H} x \right]\mathrm{d}x  $. We claim that $\hat{v}(x)\coloneqq a + \bm{d}\cdot x + \frac{1}{2} x^\top \bm{H} x$ satisfies~\eqref{eq:p2_element_approx_1}. 
First, it is clear that $\int_K (v-\hat{v})=0$ owing to the definition of the constant $a$. Next, the symmetry of $\bm{H}$ implies that $\Npw \hat{v}(x) = \bm{d}+\frac{1}{2}(\bm{H}+\bm{H}^\top)x= \bm{d}+ \bm{H} x$ for all $x\in K$, so by definition of the vector $\bm{d}$ we get $\int_K\Npw_{x_i}(v-\hat{v})=0$. 
Finally, we have $\Dpw \hat{v}=\bm{H}=\overline{\Dpw v|_K}$, so~\eqref{eq:p2_element_approx_1} is verified.
To obtain~\eqref{eq:p2_element_approx}, it remains only to apply the Poincar\'e inequality of Theorem~\ref{thm:poincare} to $v-\hat{v}$ and each component of its gradient. First, the application of the Poincar\'e inequality to each component $\Npw_{x_i}(v-\hat{v}) \in H^1(\Om;\Tp)$, for each $i\in\{1,\dots,\dim\}$, followed by a summation over the components, gives
\begin{equation}\label{eq:p2_element_approx_2}
h_K^{-2}\int_{K} \abs{\Npw (v-\hat{v})}^2 \lesssim \int_{K} \abs{\Dpw(v-\hat{v})}^2 + \int_{\Fp_{\circ}(K)} h_{+}^{-1}\abs{\jump{\Npw v}}^2, 
\end{equation}
where we have used the fact that each component of $\Npw(v-\hat{v})$ has zero mean-value on $K$ from~\eqref{eq:p2_element_approx_1}, and where we have simplified the jumps $\jump{\Npw(v-\hat{v})}=\jump{\Npw v}$ since $\hat{v}$ is a polynomial.
Next, the Poincar\'e inequality applied to $v-\hat{v}$, which also has vanishing mean-value over $K$ by~\eqref{eq:p2_element_approx_1}, also implies
\begin{equation}\label{eq:p2_element_approx_3}
h_K^{-4}\int_{K} \abs{v-\hat{v}}^2 \lesssim h_K^{-2}\int_{K} \abs{\Npw (v-\hat{v})}^2 + \int_{\Fp_{\circ}(K)} h_K^{-2} h_+^{-1}\abs{\jump{v}}^2.
\end{equation}
We then obtain \eqref{eq:p2_element_approx} from the combinations of~\eqref{eq:p2_element_approx_2} with \eqref{eq:p2_element_approx_3}.
\Fqed\end{proof}

\subsection{Limit spaces of finite element functions}

We now introduce the limit spaces of the finite element spaces, which consist of functions in $\HD$ that are piecewise polynomials of degree at most $p$ over~$\Tp$.
Recall that the norm and inner-product of the space $\HD$ are defined in~\eqref{eq:HD_norm_def} and~\eqref{eq:HD_innerprod_def} respectively.

\begin{definition}[Limit spaces]\label{def:limit_space}
Let $\Vinfty^0$ and $\Vinfty^1$ be defined by
\begin{equation}\label{eq:limit_space_def}
\Vinfty^0 \coloneqq \{ v\in \HD\colon v|_K \in \mathbb{P}_p \; \forall K \in \Tp\}, \quad \Vinfty^1 \coloneqq \Vinfty^0 \cap H^1_0(\Om). 
\end{equation}
The spaces $\Vinfty^0$ and $\Vinfty^1$ are equipped with the same inner-product and norm as $\HD$.
\end{definition}

It follows that $\Vinfty^1$ is a closed subspace of $\Vinfty^0$ and that $\Vinfty^0$ is a closed subspace of $\HD$. Therefore the spaces $\Vinfty^s$, $s\in\{0,1\}$, are Hilbert spaces under the same inner-product as $\HD$, see Theorem~\ref{thm:completeness_H2}.
The following Theorem shows that functions in the spaces $\Vinfty^s$ can be approximated by sequences of functions from the corresponding finite element spaces, thereby justifying the choice of notation.

\begin{remark}[Extension of $\norm{\cdot}_k$ to $\HD+\Vk^s$]
The trace inequality of Lemma~\ref{thm:trace} implies that  any function $v\in\HD$ has square-integrable jumps $\jump{v}$ and $\jump{\Npw v}$ over $\Fk$ for each $k\in\N$. 
Hence, the  norm $\normk{v}$ is finite for any $v\in\HD$ and any $k\in\N$.
We may thus extend the norms $\normk{\cdot}$ to the sum space $\HD + \Vk^s$ for all $s\in\{0,1\}$ and all $k\in\N$.
\end{remark}

\begin{theorem}[Approximation by finite element functions]\label{thm:limit_space_characterization}
Let $s\in\{0,1\}$. Then, for any $v\in \Vinfty^s$, there exists a sequence of finite element functions $v_k\in \Vk^s$ for each $k\in\N$, such that
\begin{equation}\label{eq:limit_space_characterization}
\lim_{k\tends \infty} \norm{v-v_k}_k =0, \quad \sup_{k\in\N} \norm{v_k}_k<\infty.
\end{equation}
Moreover, the sequence $\{v_k\}_{k\in\N}$ above can be chosen such that 
\begin{equation}
\lim_{k\tends\infty}\int_\Om \left[h_k^{-2}\abs{\Npw(v-v_k)}^2+h_k^{-4}\abs{v-v_k}^2\right]=0 .
\end{equation}
\end{theorem}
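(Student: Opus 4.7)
The strategy is to construct $v_k$ piecewise: on every never-refined element $K\in\Tk^+$ set $v_k|_K = v|_K$ (which already lies in $\mathbb{P}_p$ since $K\in\Tp$ and $v\in\Vinfty^s$); on every to-be-refined element $K\in\Tk^-$ set $v_k|_K = \hat v_K\in\mathbb{P}_2\subset\mathbb{P}_p$, the elementwise quadratic approximation supplied by Lemma~\ref{lem:p2_element_approx}. This defines a function $\tilde v_k\in\Vk^0$. In the $C^0$-IP case ($s=1$) I would apply a nodal averaging operator $A_k\colon\Vk^0\tends\Vk^1$ of Karakashian--Pascal type that averages Lagrange degrees of freedom at shared nodes and enforces a vanishing Dirichlet trace, and take $v_k\coloneqq A_k\tilde v_k$; in the DG case simply take $v_k=\tilde v_k$.

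The key elementwise estimate comes from Lemma~\ref{lem:p2_element_approx} summed over $K\in\Tk^-$: it controls $\sum_{K\in\Tk^-}\sum_{m=0}^{2} h_K^{2m-4}\int_K\abs{\Npw^m(v-\tilde v_k)}^2$ by two contributions. The first, $\norm{\Dpw v - P_k\Dpw v}^2_{L^2(\Om_k^-)}$ where $P_k$ denotes the elementwise $L^2$-projection onto piecewise constants on $\Tk^-$, tends to zero: since $\Dpw v\in L^2(\Om)$ by definition of $\HD$, a standard approximation-by-continuous-functions argument combined with $\norm{h_k\chi_{\Om_k^-}}_{L^\infty(\Om)}\to 0$ from Lemma~\ref{lem:hjvanishes} gives the claim. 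The second contribution is bounded, using $h_+\le h_K$ for faces in $\mathcal{F}^+_\circ(K)$, by $\int_{\Fp\setminus\Fks}[h_+^{-1}\abs{\jump{\Npw v}}^2 + h_+^{-3}\abs{\jump{v}}^2]$; since $\Fp=\bigcup_k\Fks$ is an ascending union and these jump integrals over $\Fp$ are finite for $v\in\HD$, absolute continuity drives this tail to zero. On $\Tk^+$ the contribution is identically zero.

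For faces $F$ shared by two elements of $\Tk^+$, $\jump{v-\tilde v_k}_F$ vanishes (in the DG case both sides equal $v$; in the $C^0$-IP case $v\in H^1_0(\Om)$ has no jumps to begin with), so the only residual face contributions involve at least one $\Tk^-$ element. These, including boundary-trace contributions from $\Tk^-$ elements touching $\partial\Om$, are reabsorbed into the weighted volume norms on $\Tk^-$ already estimated above via the trace inequality of Theorem~\ref{thm:trace}. For $s=1$, standard averaging estimates bound $\normk{(I-A_k)\tilde v_k}$ by the jump seminorm $\absJ{\tilde v_k}$ and the boundary trace, both of which have already been controlled. The weighted convergence of $\int_\Om[h_k^{-2}\abs{\Npw(v-v_k)}^2 + h_k^{-4}\abs{v-v_k}^2]$ follows directly from the $m=0,1$ terms of Lemma~\ref{lem:p2_element_approx}; the uniform bound $\sup_k\normk{v_k}<\infty$ then follows by triangle inequality together with the observation that $\normk{v}\le\norm{v}_{\HD}$ uniformly, since jumps of $v$ are supported on $\Fp$ where $h_k=h_+$.

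The principal technical obstacle is proving the vanishing of $\norm{\Dpw v - P_k\Dpw v}_{L^2(\Om_k^-)}$: since $\Dpw v$ is only $L^2$-integrable, no modulus-of-continuity estimate is available and one must pass to a continuous approximation and exploit the shrinking mesh-size on $\Om_k^-$. A secondary subtlety is verifying that the $C^0$-IP averaging operator $A_k$ respects the Dirichlet condition compatibly with boundary faces of $\Tk^-$ elements; this is ensured because the boundary-face term in $\Jpen$ (already contained in $\normk{\cdot}$) controls the boundary trace of $v-\tilde v_k$.
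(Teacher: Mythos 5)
Your construction and the paper's proof are essentially the same. The paper defines $v_k$ as the global $L^2$-orthogonal projection of $v$ onto $\Vk^0$ (which coincides with $v$ on $\Tkp$ automatically since $v|_K\in\mathbb{P}_p$ there, and is then compared with the quadratic approximant via $L^2$-stability and inverse inequalities), whereas you define $\tilde v_k$ explicitly as $v$ on $\Tkp$ and as the quadratic $\hat v_K$ from Lemma~\ref{lem:p2_element_approx} on $\Tkm$; both routes pass through the same element-wise estimate of Lemma~\ref{lem:p2_element_approx}, the same trace inequality of Theorem~\ref{thm:trace}, the same tail argument over $\Fp\setminus\Fks$ (the paper uses $\Fp\setminus\Fkp$, but both are ascending tails), the same mean-value approximation of $\Dpw v$ by smooth functions combined with Lemma~\ref{lem:hjvanishes}, and, for $s=1$, the same Karakashian--Pascal averaging step, so the differences are purely presentational.
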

\begin{proof}
\emph{Step 1. Proof for $s=0$.} 
For each $k\in\N$, let $v_k \in \Vk^0$ denote the $L^2$-orthogonal projection of $v$ into $\Vk^0$. Then, since $v|_K \in \mathbb{P}_p $ for each $K\in \Tp$, it follows immediately that $(v-v_k)|_K = 0$ for each $K\in \Tkp$. This implies also that the jumps $\jump{v-v_k}$ and $\jump{\Npw(v-v_k)}$ are only possibly nonvanishing on faces with at least one parent element in $\Tkm$.
Therefore, a counting argument gives
\begin{multline}\label{eq:limit_space_1}
\norm{v-v_k}_k^2 \lesssim  \sum_{m=0}^2 \int_{\Tkm}\abs{\Npw^m(v-v_k)}^2 \\ + \sum_{K\in\Tkm} \int_{\p K}\left[ h_{k}^{-1}\abs{\tauk \nabla (v-v_k)}^2 +  h_{k}^{-3}\abs{\tauk (v-v_k)}^2\right],
\end{multline}
where it is recalled that the trace operator $\tauk$ is bounded from $H^1(\Om;\Tp)$ to $L^2(\p K)$, as shown by Theorem~\ref{thm:trace}.
Recall also that by definition $h_k|_{K^\circ}=h_K=\abs{K}^{\frac{1}{\dim}}$.
Since $v_k$ is the $L^2$-orthogonal projection of $v$ into $\Vk^0$, the stability of the $L^2$-orthogonal projection and inverse inequalities imply that $\sum_{m=0}^2\int_K h_k^{2m-4}\abs{\nabla^m(v-v_k)}^2\lesssim\inf_{\hat{v}\in\mathbb{P}_p}\sum_{m=0}^2 \int_K h_K^{2m-4}\abs{\nabla^m(v-\hat{v})}^2$ for every $K\in\Tk$, where we recall that $p\geq 2$.
Therefore the trace inequality of Theorem~\ref{thm:trace} and the approximation bound of Lemma~\ref{lem:p2_element_approx} imply that, for each $K\in \Tkm$,
\begin{multline}
\sum_{m=0}^2\int_K h_k^{2m-4}\abs{\Npw^m(v-v_k)}^2 + \int_{\p K}\left[ h_{k}^{-1}\abs{\tauk \nabla (v-v_k)}^2 +  h_{k}^{-3}\abs{\tauk (v-v_k)}^2\right]
\\ \lesssim \int_K \abs{\Dpw v - \overline{\Dpw v|_K}}^2 +  \int_{\Fp_\circ(K)} \left[ h_+^{-1} \abs{\jump{\Npw v}}^2+h_+^{-3}\abs{\jump{v}}^2\right],
\end{multline}
where we have used the inequality $h_k^{-2}h_+^{-1}\leq h_+^{-3}$ in the term for the jumps.
We now define $\pi_k^0(\Dpw v)$ the piecewise constant $L^2$-orthogonal projection of $\Dpw v$ over $\Tk$; in particular, $\pi_k^0(\Dpw v)|_K = \overline{\Dpw v|_K}$ for each $K\in\Tk$.
Next, recall that a face $F\in \Fp_\circ(K)$ if and only if $F\in \Fpi$, and that $F\subset K$ but $F\not\subset\p K$; thus $F$ cannot be in $\Fk$, and thus $F\in \Fp\setminus \Fkp$. Therefore, it follows that
\begin{multline}\label{eq:limit_space_2}
\norm{v-v_k}_k^2 + \sum_{m=0}^1 \int_{\Om}h_k^{2m-4}\abs{\Npw^m(v-v_k)}^2 \\ \lesssim \int_{\Tkm} \abs{\Dpw v -\pi_k^0(\Dpw v)}^2 + \int_{\Fpi\setminus\Fkp} \left[ h_+^{-1} \abs{\jump{\Npw v}}^2+h_+^{-3}\abs{\jump{v}}^2\right].
\end{multline}
We now show that the right-hand side of~\eqref{eq:limit_space_2} tends to $0$ as $k\tends \infty$. The terms involving the jumps above consist of the tail of a convergent series bounded by $\norm{v}_{\HD}^2$, and thus
\[
\lim_{k\tends\infty}\int_{\Fpi\setminus\Fkp} \left[ h_+^{-1} \abs{\jump{\Npw v}}^2+h_+^{-3}\abs{\jump{v}}^2\right] =0.
\]
To handle the volume terms, let $\eps>0$ be arbitrary; then, there exist smooth functions $\bvphi_{ij} \in C^\infty_0(\Om)$  such that $\norm{\Dpw_{ij} v - \bvphi_{ij}}_{\Om}<\eps$ for all $i,\,j \in \{1,\dots,\dim\}$. Therefore, recalling Lemma~\ref{lem:hjvanishes}, we get
\begin{equation*}
\begin{split}
\lim_{k\tends \infty}\int_{\Tkm}\abs{\Dpw v-\pi_k^0(\Dpw v)}^2
& \lesssim \sum_{i,j=1}^\dim \left[\norm{\Npw_{ij}^2 v- \bvphi_{ij}}_{\Om}^2+ \lim_{k\tends \infty}\norm{\bvphi_{ij}-\pi_k^0(\bvphi_{ij})}_{\Om_k^{-}}^2 \right] \\
& \lesssim d^2 \eps^2 +  \lim_{k\tends \infty}\sum_{i,j=1}^\dim     \norm{h_k \nabla \bvphi_{ij}}_{\Om_k^{-}}^2  \leq d^2 \eps^2,
\end{split}
\end{equation*}
where, in the first inequality, we have used the stability of the $L^2$-orthogonal projection to bound $\norm{w_{ij}-\pi_k^0(w_{ij})}_\Om\leq \norm{w_{ij}}_\Om$, with $w_{ij}=\Npw_{ij} v-\bvphi_{ij}$, and where, in the second inequality, we note that $\norm{h_k \nabla \bvphi_{ij}}_{\Om_k^{-}}\tends 0$ in the limit owing to Lemma~\ref{lem:hjvanishes}.
Since $\eps$ is arbitrary, we conclude that $\int_{\Tkm}\abs{\Dpw v-\pi_k^0(\Dpw v)}^2\tends 0$ as $k\tends \infty$, which completes the proof that the right-hand side of \eqref{eq:limit_space_2} vanishes in the limit;  from this we then infer that
\[
\begin{aligned}
\lim_{k\tends\infty}\norm{v-v_k}_k=0, &&&
\lim_{k\tends\infty}\int_\Om \left[h_k^{-2}\abs{\Npw(v-v_k)}^2+h_k^{-4}\abs{v-v_k}^2\right]=0.
\end{aligned}
\] 
Combining the triangle inequality with the bounds obtained above then shows that $\sup_{k\in\N}\norm{v_k}_k\lesssim \norm{v}_{\HD}$ and thus completes the proof of~\eqref{eq:limit_space_characterization} for $s=0$.

\emph{Step 2. Proof for $s=1$.} Now let $s=1$ and consider $v\in \Vinfty^1$. Let $w_k \in \Vk^0$ be defined as the $L^2$-orthogonal projections of $v$ into $\Vk$ for each $k\in\N$. Note that we are now relabelling the sequence of approximations used in \emph{Step~1} above.
Since $\Vinfty^1\subset \Vinfty^0$, it follows from the arguments of \emph{Step~1} that $\norm{v-w_k}_{k}\tends 0$ and $\int_\Om h_k^{2m-4}\abs{\Npw^m(v-w_k)}^2\tends 0$ as $k\tends \infty$, for each $m\in\{0,1\}$.
Now let $v_k\coloneqq E_k^1 w_k$ where $E_k^1\colon \Vk^0\tends\Vk^1$ is the $H^1_0$-conforming enrichment operator based on local averaging of degrees of freedom as in~\cite{KarakashianPascal03}. Adapting the analysis therein to the present setting, we obtain the bounds
\begin{equation}\label{eq:limit_space_3}
\sum_{m=0}^2 \int_\Om h_k^{2m-4}\abs{\Npw^m(w_k-v_k)}^2 \lesssim \int_{\Fk} h_k^{-3}\abs{\jump{w_k}}^2=\int_{\Fk} h_k^{-3}\abs{\jump{v-w_k}}^2\leq \norm{v-w_k}_k^2,
\end{equation}
where we have used the fact that now $v\in \Vinfty^1 \subset H^1_0(\Om)$ and hence $\jump{w_k}=\jump{w_k-v}$ for all faces of $\Fk$. Furthermore, the bound~\eqref{eq:limit_space_3}, the triangle inequality and the trace inequality imply that
\begin{multline}\label{eq:limit_space_4}
\int_{\Fki}h_k^{-1}\abs{\jump{\Npw (v-v_k)}}^2 \lesssim \int_{\Fki}h_k^{-1}\abs{\jump{\Npw(v-w_k)}}^2 +\int_{\Fki}h_k^{-1}\abs{\jump{\Npw(w_k-v_k)}}^2
\\ \lesssim \norm{v-w_k}_k^2 +\sum_{m=1}^2 \int_\Om h_k^{2m-4}\abs{\Npw^m(w_k-v_k)}^2 \lesssim \normk{v-w_k}^2.
\end{multline}
So, after applying the triangle inequality and combining the bounds~\eqref{eq:limit_space_3} and \eqref{eq:limit_space_4}, we get
\begin{equation}
\norm{v-v_k}_k^2 + \sum_{m=0}^1 \int_\Om h_k^{2m-4}\abs{\Npw^m(v-v_k)}^2 \lesssim \norm{v-w_k}_k^2 + \sum_{m=0}^1 \int_\Om h_k^{2m-4}\abs{\Npw^m(v-w_k)}^2,
\end{equation}
and we note that the right-hand side above tends to $0$ as $k\tends \infty$.
Hence if $v\in \Vinfty^1$, then the claim of the Theorem is also satisfied for a sequence of functions $v_k\in\Vk^1$ for all $k\in\N$.
\Fqed\end{proof}

\begin{remark}
Theorem~\ref{thm:limit_space_characterization} shows that functions in $\Vinfty^s$ are limits in the sense of \eqref{eq:limit_space_characterization} of functions from the finite element spaces $\Vk^s$, thereby justifying the choice of notation for the limit spaces. 
Furthermore,~Theorem~\ref{thm:limit_space_characterization} establishes the connection between our approach and the approach in \cite{DominincusGaspozKreuzer19,KreuzerGeorgoulis18} where the limit spaces are defined in terms of the existence of an approximating sequence from the finite element spaces. 
\end{remark}

\begin{corollary}[Limits of norms and jumps]\label{cor:jump_term_limits}
For any $v\in\Vinfty^s$, $s\in\{0,1\}$, the sequence $\{\normk{v}\}_{k\in\N}$ is a monotone increasing sequence that converges to $\norm{v}_{\HD}$ as $k \tends \infty$, and
\begin{equation}\label{eq:jump_term_vanish}
\lim_{k\tends \infty}\int_{\Fki\setminus\Fksi}h_k^{-1}\abs{\jump{\Npw v}}^2+\int_{\Fk\setminus\Fks}h_k^{-3}\abs{\jump{v}}^2=0.
\end{equation}
The limit in \eqref{eq:jump_term_vanish} also holds with the sets $\Fks$ and $\Fksi$ replaced by $\Fkp$ and $\Fkip$, respectively.
\end{corollary}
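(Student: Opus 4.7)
The volume contributions $\int_\Om[|\Dpw v|^2+|\Npw v|^2+|v|^2]$ appear identically in $\normk{v}^2$ and $\norm{v}_{\HD}^2$, so the entire statement reduces to an analysis of the jump seminorm $\absJ{v}_k^2 = \int_{\Fki}h_k^{-1}|\jump{\Npw v}|^2 + \int_{\Fk}h_k^{-3}|\jump{v}|^2$. The plan is to show that $\absJ{v}_k^2\nearrow J_\infty(v):=\int_{\Fpi}h_+^{-1}|\jump{\Npw v}|^2+\int_{\Fp}h_+^{-3}|\jump{v}|^2$, and then subtract the stable contribution on $\Fksi, \Fks$ to extract the vanishing limits. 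Monotonicity is routine: each $F\in\Fk$ either persists in $\cF_{k+1}$ or is subdivided into sub-faces $F_1,\ldots,F_j\in\cF_{k+1}$ with $\sum_i\mathcal{H}^{\dim-1}(F_i)=\mathcal{H}^{\dim-1}(F)$, hence $h_{k+1}|_{F_i}\le h_k|_F$. Since traces are defined pointwise via ball averages (Section~\ref{sec:BV}), $\jump{v}|_{F_i}$ is simply the restriction of $\jump{v}|_F$, so $\int_F h_k^{-3}|\jump{v}|^2\le \sum_i\int_{F_i}h_{k+1}^{-3}|\jump{v}|^2$, and analogously for $\jump{\Npw v}$. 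New faces created inside refined elements contribute nonnegatively, so $\absJ{v}_k^2\le \absJ{v}_{k+1}^2$.

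The crux is the intrinsic upper bound $\absJ{v}_k^2\le J_\infty(v)$, which I derive from the localisation of the singular parts of $Dv$ and $D(\Npw v)$. Since $v\in\HD\subset H^1_D(\Om;\Tp)$, the identities~\eqref{eq:distderiv_H100Tp} and~\eqref{eq:distderiv_h2} show that these singular parts are supported on $\Sp$ and $\Sp^I$ respectively, so $\jump{v}_F=0$ $\mathcal{H}^{\dim-1}$-a.e.\ on $F\setminus\Sp$ for every $F\in\Fk$, and analogously $\jump{\Npw v}_F=0$ on $F\setminus\Sp^I$. By the nested refinement structure any $G\in\Fp$ intersecting $F\in\Fk$ in positive $(\dim-1)$-measure must satisfy $G\subset F$ (since $G\in\Fp$ cannot be split after it appears, so $F\subsetneq G$ is impossible), whence $F\cap\Sp=\bigcup_{G\in\Fp,\, G\subset F}G$ up to measure zero. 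For such $G\subset F$, $h_+|_G\le h_k|_F$, giving the pointwise bound $h_k^{-3}|_F\le h_+^{-3}|_G$ on $G$; summing over $G\subset F$ and then over $F\in\Fk$ (each $G$ belonging to at most one $F$) yields $\absJ{v}_k^2\le J_\infty(v)$. Conversely, $\Fkp\subset\Fp$ with $h_k=h_+$ on $\Fkp$, together with the ascending property $\Fkp\nearrow\Fp$ and $\Fkip\nearrow\Fpi$, gives $\absJ{v}_k^2 \ge \int_{\Fkip}h_+^{-1}|\jump{\Npw v}|^2+\int_{\Fkp}h_+^{-3}|\jump{v}|^2 \nearrow J_\infty(v)$ by monotone convergence. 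Combined with the monotonicity in $k$, this forces $\lim_k\absJ{v}_k^2=J_\infty(v)$, and hence $\normk{v}^2\nearrow\norm{v}_{\HD}^2$.

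For the final statement, split $\absJ{v}_k^2$ into the stable contribution $\int_{\Fksi}h_k^{-1}|\jump{\Npw v}|^2+\int_{\Fks}h_k^{-3}|\jump{v}|^2$ and its complement over $(\Fki\setminus\Fksi)\cup(\Fk\setminus\Fks)$. Since $\Fks\subset\Fkp\subset\Fp$ with $h_k=h_+$ on $\Fks$, the stable part equals $\int_{\Fksi}h_+^{-1}|\jump{\Npw v}|^2+\int_{\Fks}h_+^{-3}|\jump{v}|^2$, which converges to $J_\infty(v)$ by monotone convergence on the ascending family $\Fks\nearrow\Fp$ established in Section~\ref{sec:refinement_sets}. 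Subtracting from $\absJ{v}_k^2\to J_\infty(v)$ and invoking nonnegativity of the two remaining integrals separately yields $\int_{\Fki\setminus\Fksi}h_k^{-1}|\jump{\Npw v}|^2\to 0$ and $\int_{\Fk\setminus\Fks}h_k^{-3}|\jump{v}|^2\to 0$; the analogue with $\Fkp,\Fkip$ is immediate from $\Fks\subset\Fkp$. The hard part throughout is the intrinsic upper bound $\absJ{v}_k^2\le J_\infty(v)$: without the localisation of the singular parts of $Dv$ and $D(\Npw v)$ to $\Sp$ and $\Sp^I$, one could not rule out spurious jump contributions from faces that are present at step $k$ but are eventually refined away.
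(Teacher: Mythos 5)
Your proof is correct, but it takes a genuinely different route from the paper's. The paper obtains the uniform bound $\sup_k\normk{v}<\infty$ from Theorem~\ref{thm:limit_space_characterization}: it invokes the existence of an approximating finite element sequence $v_k\in\Vk^s$ with $\normk{v-v_k}\to0$ and $\sup_k\normk{v_k}<\infty$, and then uses the Cauchy criterion for the resulting bounded monotone sequence together with Lemma~\ref{lem:face_refinements} and the fact that refined faces shrink by a fixed factor to extract~\eqref{eq:jump_term_vanish}. You instead avoid Theorem~\ref{thm:limit_space_characterization} entirely: you read off from~\eqref{eq:distderiv_H100Tp} and~\eqref{eq:distderiv_h2} that the singular parts of $Dv$ and $D(\nabla v)$ are supported on $\Sp$ and $\Spi$, so the jumps on any mesh face $F$ vanish on $F\setminus\Sp$ (resp.\ $F\setminus\Spi$), and then you sandwich $\absJ{v}^2$ between the partial sums over $\Fkp$ (monotone convergence gives the limit from below) and the intrinsic upper bound obtained by inclusion of $F\cap\Sp$ into the sub-faces $G\in\Fp$ with $G\subset F$ and the weight inequality $h_+|_G\le h_k|_F$. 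This is arguably more in the spirit of the paper's intrinsic program: your argument applies verbatim to any $v\in\HD$, not merely to piecewise polynomials in $\Vinfty^s$, and it does not depend on the (nontrivial) approximation Theorem~\ref{thm:limit_space_characterization}. What the paper's route buys is brevity given that Theorem~\ref{thm:limit_space_characterization} has already been established; what your route buys is self-containment and a stronger statement. The only small caveat is that your claim that $G\in\Fp$ with $\mathcal{H}^{\dim-1}(G\cap F)>0$ and $F\in\Fk$ forces $G\subset F$ deserves a line of justification along nestedness: if the parent element of $G$ in $\Tp$ already belongs to $\Tk$, then $G\in\Fk$ and $G=F$; otherwise it is a descendant of an element of $\Tk$, so $G$ lies in a face of $\Tk$ or in an element interior, and positive-measure overlap with $F$ forces $G\subset F$.
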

\begin{proof}
The proof follows the same lines as \cite{DominincusGaspozKreuzer19,KreuzerGeorgoulis18}, and we include the proof only for completeness. For $v\in\Vinfty^s$, let $v_k\in\Vk^s$ denote the sequence of functions given by Theorem~\ref{thm:limit_space_characterization}.  We infer the uniformly boundedness of $\{\normk{v}\}_{k\in\N}$ from the convergence $\normk{v-v_k}\tends 0$ as $k\tends \infty$ and the uniform boundedness $\sup_{k\in\N}\normk{v_k}<\infty$. Moreover the sequence $\normk{v}$ is monotone increasing since $h_k^{-1}\leq h_m^{-1}$ for all $m\geq k$, and thus convergences to a limit.
We claim that $\int_{\Fki}h_k^{-1}\abs{\jump{\Npw v}}^2 \tends \int_{\Fpi}h_+^{-1}\abs{\jump{v}}^2$ and that $\int_{\Fk}h_k^{-3}\abs{\jump{ v}}^2\tends \int_{\Fp}h_+^{-3}\abs{\jump{ v}}^2$.
For any $\eps>0$, there is an $\ell\in\N$ such that $\abs{\norm{v}^2_m-\norm{v}^2_k}<\eps$ for all $m,k\geq \ell$.
Moreover, Lemma~\ref{lem:face_refinements} shows that there is an $M=M(k)$ such that for all $m\geq M$, then $\Fkp=\Fk\cap \cF_m$ which implies also that $\Fkip=\Fki\cap\cF^I_m$, hence
\begin{multline*}
\eps > \int_{\cF^I_m\setminus(\Fkip)} h_{m}^{-1} \abs{\jump{\Npw v}}^2 - \int_{\Fki\setminus(\Fkip)}h_{k}^{-1}\abs{\jump{\Npw v}}^2 
\\ + \int_{\cF_m\setminus(\Fkp)} h_{m}^{-3} \abs{\jump{v}}^2 - \int_{\Fk\setminus(\Fkp)} h_{k}^{-3} \abs{\jump{v}}^2
\\ \gtrsim \int_{\Fki\setminus(\Fkip)}h_{k}^{-1}\abs{\jump{\Npw v}}^2 + \int_{\Fk\setminus(\Fkp)} h_{k}^{-3} \abs{\jump{v}}^2,
\end{multline*}
where in the second inequality we use the fact that when face is refined, the $(d-1)$-dimensional Hausdorff measure of that face decreases at least by a fixed factor strictly less than one.
Thus, we obtain $\int_{\Fki\setminus(\Fkip)}h_{k}^{-1}\abs{\jump{\Npw v}}^2 + \int_{\Fk\setminus(\Fkp)} h_{k}^{-3} \abs{\jump{v}}^2 \tends 0$ as $k\tends \infty$.
Note that $h_+|_F=h_k|_F$ for any $F\in\Fkp$, so we use $\int_{\Fpi\setminus\Fkip} h_+^{-1}\abs{\jump{\Npw v}}^2 \tends0 $ to obtain $\int_{\Fki}h_k^{-1}\abs{\jump{\Npw v}}^2 \tends \int_{\Fpi}h_+^{-1}\abs{\jump{\Npw v}}^2$.
Similarly, we find that $\int_{\Fk}h_k^{-3}\abs{\jump{ v}}^2\tends \int_{\Fp}h_+^{-3}\abs{\jump{ v}}^2$.
The above limits imply that $\int_{\Fki\setminus\Fkip} h_+^{-1}\abs{\jump{\Npw v}}^2 \tends 0$ and that $\int_{\Fk\setminus\Fkp} h_+^{-3}\abs{\jump{\Npw v}}^2\tends 0$ as $k\tends \infty$.
 Then, we obtain~\eqref{eq:jump_term_vanish} from the limits $\int_{\Fpi\setminus\Fksi} h_+^{-1}\abs{\jump{\Npw v}}^2 \tends 0$ and from $\int_{\Fp\setminus\Fks} h_+^{-3}\abs{\jump{ v}}^2 \tends 0$ as $k\tends \infty$. We finally conclude that $\normk{v}\tends \norm{v}_{\HD}$ as $k\tends\infty$ from the above limits.
\Fqed\end{proof}

\subsection{Limit lifting operators and weak compactness of bounded sequences of finite element functions}

In order to study the weak convergence properties of bounded sequences of functions from the finite element spaces, we now introduce a lifting operator defined on the limit space $\Vinfty^s$ along with corresponding lifted differential operators. Recall that for each $F\in\Fp$, there exists $\ell\in\N$ such that $F\in \Fks$ for each $k\geq \ell$, thereby implying that the operators $\br_k^F=\br_\ell^F$ for all $k\geq \ell$. We then define $\br_\infty^F \coloneqq \br_\ell^F$, and note that this is well-defined as it is independent of $\ell$. It follows that $\br_\infty^F$ maps $L^2(F;\R^\dim)$ into $\Ldd$, and moreover that the support of $\br_\infty^F(\bm{w})$ is contained in the union of all parent elements in $\Tp$ of $F$, and is thus a subset of $\Omp$. Then, for any $v\in\Vinfty^s$, define the lifted Hessian and Laplacian as
\begin{align}\label{eq:H_infty_def}
\Hinf v \coloneqq \Dpw v - \br_\infty(\jump{\Npw v}), \quad \Delta_\infty v \coloneqq \Tr(\Hinf v), \quad \br_\infty(\jump{\Npw v}) \coloneqq \sum_{F\in\Fp}\br_\infty^F(\jump{\Npw v}),
\end{align}
where we note that series defining $\br_\infty(\jump{\Npw v})$ in \eqref{eq:H_infty_def} is understood as a convergent series of functions in $\Ldd$, owing to the finite overlap of the supports of the lifting operators which implies that
\begin{equation}\label{eq:infty_lifting_boundedness}
\norm{\br_\infty(\jump{\Npw v})}_{\Om}^2\lesssim \sum_{F\in\Fp}\norm{\br_\infty^F(\jump{\Npw v})}^2_{\Om} \lesssim \int_{\Fpi} h_+^{-1}\abs{\jump{\Npw v}}^2 + \int_{\Fp}h_+^{-3}\abs{\jump{v}}^2 < \infty,
\end{equation}
for all $v\in\Vinfty^s$, where we have used an inverse inequality to bound $\jump{\Npw_T v}$, the tangential component of the jumps, on boundary faces, which is possible since $v\in \Vinfty^s$ is piecewise polynomial on $\Tp$.
Moreover, the lifting $\br_\infty(\jump{\Npw v})$ is essentially supported on $\Omp$ and its restriction $\br_\infty(\jump{\Npw v})|_K$ is a piecewise $\dim\times\dim$-matrix valued polynomial of degree at most $q$ for each $K\in\Tp$.
It is then easy to see that the operators $\Hinf$ and $\Delta_\infty$ defined in~\eqref{eq:H_infty_def} are bounded on the space $\Vinfty^s$, i.e.\
\begin{equation}\label{eq:infty_lifting_boundedness_2}
\begin{aligned}
\norm{\Hinf v}_{\Om} + \norm{\Delta_\infty v}_\Om &\lesssim \norm{v}_{\HD} &&&\forall v \in \Vinfty^s.
\end{aligned}
\end{equation}
The next lemma shows that the lifting operators defined in~\eqref{eq:H_infty_def} are the appropriate limits of the corresponding operators from~\eqref{eq:lifted_Hessian} applied to strongly convergent sequences of finite element functions.

\begin{lemma}[Convergence of lifting operators]\label{lem:lifted_Hess_convergence}
Let $\{v_k\}_{k\in\N}$ be a sequence of functions such that $v_k\in\Vk^s$ for each $k\in\N$, and suppose that there is a $v\in \Vinfty^s$ such that $\norm{v-v_k}_k\tends 0$ as $k\tends \infty$. Then~$\bm{H}_k v_k \tends \bm{H}_{\infty} v$ and $\br_k(\jump{\Npw v_k})\tends \br_{\infty}(\jump{\Npw v})$ in $\Ldd$ as $k\tends\infty$.  
\end{lemma}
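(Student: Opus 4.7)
The plan is first to reduce the claim to the convergence of the two ingredients of $\Hk$ separately: since $\Hk v_k = \Dpw v_k - \br_k(\jump{\Npw v_k})$ and $\Hinf v = \Dpw v - \br_\infty(\jump{\Npw v})$, it suffices to show $\Dpw v_k\to\Dpw v$ and $\br_k(\jump{\Npw v_k})\to\br_\infty(\jump{\Npw v})$ in $\Ldd$; the convergence $\Delta_k v_k \to \Delta_\infty v$ then follows by taking matrix traces. The first convergence is immediate: $\int_\Om\abs{\Dpw(v-v_k)}^2\le\normk{v-v_k}^2\to 0$ by hypothesis.

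For the second convergence I split by linearity:
\[
\br_k(\jump{\Npw v_k}) - \br_\infty(\jump{\Npw v}) = \br_k(\jump{\Npw(v_k-v)}) + \bigl(\br_k(\jump{\Npw v}) - \br_\infty(\jump{\Npw v})\bigr).
\]
The first piece vanishes by the $\br_k$-boundedness already recorded in the paper: $\norm{\br_k(\jump{\Npw(v_k-v)})}_\Om\lesssim\absJ{v-v_k}\le\normk{v-v_k}\to 0$. The main work lies in the second piece, and the key observation is that if $F\in\Fks$ then both elements adjacent to $F$ lie in $\Tkp$ and stay unchanged in every finer mesh; in particular the local test polynomial space $\Vq$ on those elements is identical in $\Tk$ and in the mesh $\cT_\ell$ used to define $\br_\infty^F$, whence $\br_k^F=\br_\infty^F$. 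Consequently the contributions indexed by $F\in\Fks$ cancel exactly, leaving only two tails: one over $\Fk\setminus\Fks$ from $\br_k(\jump{\Npw v})$ and one over $\Fp\setminus\Fks$ from $\br_\infty(\jump{\Npw v})$.

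Each tail I estimate by the $L^2$-bound for face lifting operators together with the finite overlap of their supports. Interior faces give directly $\norm{\br_k^F(\bm{w})}_\Om^2\lesssim h_F^{-1}\norm{\bm{w}}_F^2$. On boundary faces only the tangential component is lifted, and since $v\in\Vinfty^s$ is piecewise polynomial on $\Tp$, an inverse inequality on $F$ converts $h_F\norm{\jump{\Npw_T v}}_F^2$ into a multiple of $h_F^{-1}\norm{\jump{v}}_F^2$. Combining these bounds yields
\[
\Big\|\textstyle\sum_{F\in\Fk\setminus\Fks}\br_k^F(\jump{\Npw v})\Big\|_\Om^2 \lesssim \int_{\Fki\setminus\Fksi}h_k^{-1}\abs{\jump{\Npw v}}^2 + \int_{\Fk\setminus\Fks}h_k^{-3}\abs{\jump{v}}^2,
\]
which tends to $0$ by Corollary~\ref{cor:jump_term_limits}. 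For the other tail, the analogous bound with $h_k$ replaced by $h_+$ holds, and the right-hand side is the tail of a convergent series owing to \eqref{eq:infty_lifting_boundedness}; since $\Fks\uparrow\Fp$ as $k\to\infty$, this tail also vanishes.

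The main obstacle is less analytic than bookkeeping: one has to be careful about which face belongs to which of $\Fks$, $\Fkp$ and $\Fp$, and to justify rigorously the identification $\br_k^F=\br_\infty^F$ for $F\in\Fks$ by tracing the supports of the test functions through the nested refinement. Once that identification is in place, the boundary inverse inequality and Corollary~\ref{cor:jump_term_limits} close both tail estimates and deliver the two convergences, whence $\Hk v_k\to\Hinf v$ in $\Ldd$.
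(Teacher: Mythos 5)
Your proof is correct and follows essentially the same route as the paper. You split off $\Dpw v_k\to\Dpw v$, decompose $\br_k(\jump{\Npw v_k})-\br_\infty(\jump{\Npw v})$ into $\br_k(\jump{\Npw(v_k-v)})$ plus a tail in which $\br_k^F=\br_\infty^F$ for $F\in\Fks$ is exploited to cancel, and estimate the remaining tails over $\Fk\setminus\Fks$ and $\Fp\setminus\Fks$ via the $L^2$-boundedness of the face liftings, finite overlap, and the boundary-face inverse inequality; you close the estimate with Corollary~\ref{cor:jump_term_limits} and the tail of the convergent series from~\eqref{eq:infty_lifting_boundedness}. This is exactly the decomposition underlying the six-term bound~\eqref{eq:lifting_terms_bound} in the paper's proof, just written out more explicitly step by step.
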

\begin{proof}
Since the hypothesis of convergence in norms implies that $\Dpw v_k \tends \Dpw v$ in $\Ldd$, it is enough to show that $\br_k(\jump{\Npw v_k}) \tends \br_\infty(\jump{\Npw v})$ in $\Ldd$ as $k\tends\infty$, as convergence of $\Hk v_k$ to $\Hinf v$ then follows immediately.
By definition, the face lifting operator $\br_\infty^F=\br_k^F$ for every $F\in\Fks$.
So, the triangle inequality and the finite overlap of the supports of the lifting operators yield
\begin{multline}\label{eq:lifting_terms_bound}
\norm{\br_\infty(\jump{\Dp v})-\br_k(\jump{\Dp{ v_k}})}_\Om^2
\lesssim \int_{\Fki}h_k^{-1}|\jump{\Dp(v-v_k)}|^2+\int_{\Fpi\setminus\Fksi}h_+^{-1}|\jump{\Dp v}|^2  
\\ +\int_{\Fki\setminus\Fksi}h_k^{-1}\abs{\jump{\Dp{v}}}^2
 + \int_{\Fkb} h_k^{-3}\norm{\jump{v-v_k}}^2
 \\ +\int_{\cF^{B+}\setminus\Fkp}h_+^{-3}\abs{\jump{v}}^2+\int_{\Fkb\setminus\Fks}h_k^{-3}\abs{\jump{v}}^2.
\end{multline}
The right-hand side of \eqref{eq:lifting_terms_bound} tends to zero owing to \eqref{eq:jump_term_vanish}, to the convergence of $\norm{v-v_k}_k\tends 0$, and the vanishing tails $\int_{\Fpi\setminus\Fksi}h_+^{-1}|\jump{\Dp v}|^2+\int_{\cF^{B+}\setminus\Fkp}h_+^{-3}\abs{\jump{v}}^2\tends 0$ as $k\tends \infty$. Then $\Hk  v_k\tends \Hinf v$ follows from the convergence of $\rk\tends \br_\infty(\jump{\Npw v}) $ and $\Dpw v_k \tends \Dpw v$.
\Fqed\end{proof}

We now prove that bounded sequences of functions from the finite element spaces have appropriate weak compactness properties, and have weak limits in the limit spaces. 
Let $\chi_{\Omp}$ denote the indicator function of the set~$\Omp$.

\begin{theorem}[Weak convergence]\label{thm:weak_convergence}
Let $\{v_k\}_{k\in\N}$ be a sequence of functions such that $v_k\in \Vk^s$ for each $k\in\N$, and such that $\sup_{k\in\N}\norm{v_k}_k<\infty$.
Then, there exist a $v\in \Vinfty^s$ and a $\br\in \Ldd$ such that $\br \chi_{\Omp}=\br_\infty(\jump{\Npw v})$ a.e.\ in $\Om$, and there exists a subsequence $\{v_{k_j}\}_{j\in\N}$ such that $v_{k_j}\tends v$ in $L^2(\Om)$, $\Npw v_{k_j}\tends \Npw v \in L^2(\Om;\R^\dim)$, $\bm{H}_{k_j}v_{k_j}\rightharpoonup \bm{H}_{\infty} v$ and $\br_{k_j}(\jump{\Npw v_{k_j}}) \rightharpoonup \br$ in $\Ldd$ as $j\tends \infty$.
\end{theorem}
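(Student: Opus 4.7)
The plan is to use Hilbert-space reflexivity to extract weakly convergent subsequences, to upgrade $v_k$ and $\Npw v_k$ to strong $L^2$-convergence via $BV$-compactness, to identify $v\in\Vinfty^s$ and the weak Hessian limit through distributional identities obtained by elementwise integration by parts, and finally to identify the weak limits of the lifted quantities by exploiting the locality of the lifting operators on the never-refined set and their $L^2$-orthogonality on the refined set.

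First, the boundedness of $\normk{v_k}$ gives uniform $L^2$-bounds on $v_k$, $\Npw v_k$, $\Dpw v_k$, and (by the stability bound stated after~\eqref{eq:lifted_Hessian}) on $\br_k(\jump{\Npw v_k})$; reflexivity extracts a subsequence (still indexed by $k_j$) with weak limits $v, G, H, \br$ in the respective $L^2$ spaces. The distributional derivatives $Dv_k$ and $D(\Npw v_k)$ are Radon measures whose total variations are controlled by $\normk{v_k}$, the jump contributions being handled by Cauchy--Schwarz against the jump penalties together with the geometric bound $\int_{\Fk}h_k\lesssim|\Omega|$, exactly as in Lemma~\ref{lem:BV_embedding_H1TP}. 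Compactness of $BV\hookrightarrow L^1$ together with the uniform $L^2$-bounds then yields strong $L^2$-convergence of $v_{k_j}$ and $\Npw v_{k_j}$ along a further subsequence, and in particular $G=\Npw v$.

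Next, for test functions $\bphi\in\CidR$ and $\bvphi\in\Cidd$, elementwise integration by parts on $\Tk$ yields
\[
-\int_{\R^\dim}v_k\Div\bphi=\int_\Omega\Npw v_k\cdot\bphi-\int_{\Fk}\jump{v_k}(\bphi\cdot\bn),
\]
and the analogous identity for $\Npw v_k$. The face sum splits into $\Fkp$ and $\Fk\setminus\Fkp$: on $\Fk\setminus\Fkp$ one uses Cauchy--Schwarz against the jump penalty together with $\int_{\Fk\setminus\Fkp}h_k^3\to 0$ from Lemma~\ref{lem:hjvanishes} to kill the contribution, while on $\Fkp$ Lemma~\ref{lem:face_refinements} shows that the set stabilises after finitely many refinements and pointwise trace convergence (via a diagonal extraction from the $BV$-convergence on each face) passes the sum to $\int_{\Fp}\jump{v}(\bphi\cdot\bn)$. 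This produces the distributional identities~\eqref{eq:distderiv_H100Tp} and~\eqref{eq:distderiv_h2}, identifying $v\in\HD$ with $H=\Dpw v$; weak lower semicontinuity controls the $\HD$-norm. For each $K\in\Tp$ one has $K\in\Tkp$ for $k$ large enough so $v_{k_j}|_K\in\Pp$, and strong $L^2$-convergence gives $v|_K\in\Pp$; the closedness of $H^1_0(\Omega)$ under $L^2$-limits handles the case $s=1$, so $v\in\Vinfty^s$.

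Finally, on $\Omp$ I test $\br_k(\jump{\Npw v_k})$ against smooth $\bvphi$ compactly supported inside a finite union of elements of $\Tp$: inserting the $L^2$-projection $\Pi_k^q\bvphi\in[\Vq]^{\dim\times\dim}$ into~\eqref{eq:lifting_identity}, using the stabilisation $\br_k^F=\br_\infty^F$ for $F\in\Fks$ from Lemma~\ref{lem:eventuallyinT+}, and applying the face-sum analysis above yields $\br\chi_{\Omp}=\br_\infty(\jump{\Npw v})$. On $\Omm$, for smooth $\bvphi$ compactly supported in the open set $\Omm$, I split $\int\br_k(\jump{\Npw v_k}){:}\bvphi=\int\br_k(\jump{\Npw v_k}){:}\Pi_k^q\bvphi+\int\br_k(\jump{\Npw v_k}){:}(\bvphi-\Pi_k^q\bvphi)$. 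The remainder is $o(1)$ since $\|\bvphi-\Pi_k^q\bvphi\|_{L^2(\Omega_k^-)}\lesssim\|h_k\chi_{\Omega_k^-}\|_\infty\|\nabla\bvphi\|_{L^2}\to 0$ combined with the uniform $L^2$-bound on $\br_k(\jump{\Npw v_k})$. For the projected part, \eqref{eq:lifting_identity} reduces it to $\sum_F\int_F\jump{\Npw v_k}\cdot\{\Pi_k^q\bvphi\}\bn$; by Cauchy--Schwarz against the jump penalty and the same $\|\bvphi-\Pi_k^q\bvphi\|$-estimate one further reduces to $\sum_F\int_F\jump{\Npw v_k}\cdot\{\bvphi\}\bn$, and the latter equals $\int\Dpw v_k{:}\bvphi+\int\Npw v_k\cdot\Div\bvphi$ by elementwise integration by parts. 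Passing to the limit with $\Dpw v_k\rightharpoonup\Dpw v$, $\Npw v_k\to\Npw v$, and the distributional identity~\eqref{eq:distderiv_h2} (whose face contribution vanishes since $\bvphi\equiv 0$ on $\Fp\subset\Omp$) gives the limit $0$. Hence $\br\chi_{\Omm}=0$, so $\br=\br_\infty(\jump{\Npw v})$ a.e., and the identity $\Hk v_k=\Dpw v_k-\br_k(\jump{\Npw v_k})$ together with the weak limits established above delivers $\Hk v_k\rightharpoonup\Hinf v$. The main obstacle is the identification $\br\chi_{\Omm}=0$: on the refined region no direct control of individual face liftings is available beyond the uniform $L^2$-bound, and the cancellation must be extracted from the combination of the quantitative smallness of $h_k$ on $\Omega_k^-$, the $L^2$-orthogonality of $\br_k$ onto $[\Vq]^{\dim\times\dim}$, and the distributional identity for $v\in\HD$.
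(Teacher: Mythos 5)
Your overall architecture (subsequence extraction, strong $L^2$-compactness of $v_k$ and $\Npw v_k$, distributional identification via elementwise integration by parts, localization of $\br$) is the right skeleton, and your treatment of the first-order identity for $Dv$ is sound. But there is a genuine gap, and you have actually proved a false statement.

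The critical error occurs when you pass from the elementwise identity for $D(\Npw v_k)$ to~\eqref{eq:distderiv_h2}. You claim that the face sum over $\Fk\setminus\Fkp$ can be killed by Cauchy--Schwarz against the jump penalty "together with $\int_{\Fk\setminus\Fkp}h_k^3\to 0$." That works for $Dv$ because the penalty on $\jump{v_k}$ is $h_k^{-3}$, so the dual factor is $\int_{\Fk\setminus\Fkp}h_k^3$. But the penalty on $\jump{\Npw v_k}$ is only $h_k^{-1}$, so the dual factor is $\int_{\Fk\setminus\Fkp}h_k \eqsim \sum_F h_F^\dim$, which is comparable to $|\Omega^{1-}_k|$ and converges to $|\Omm|$, not to zero. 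Those face contributions do not vanish: they converge to $\int_\Om \br\chi_{\Omm}:\bvphi$, and the distributional identity you obtain is therefore not $\Dpw v = \bm{M}$ (where $\bm{M}$ denotes the weak limit of $\Dpw v_k$) but rather $\Dpw v = \bm{M}-\br\chi_{\Omm}$, exactly as in~\eqref{eq:weak_convergence_4}. Your subsequent reasoning repeatedly invokes "$\Dpw v_k\rightharpoonup\Dpw v$", which is equivalent to $\br\chi_{\Omm}=0$ and so is circular; and it is false in general, as the remark immediately following Theorem~\ref{thm:weak_convergence} explicitly points out. In particular, the theorem only asserts $\br\chi_{\Omp}=\br_\infty(\jump{\Npw v})$, with $\br$ possibly nonvanishing on $\Omm$; the final display of your proposal, "Hence $\br\chi_{\Omm}=0$, so $\br=\br_\infty(\jump{\Npw v})$ a.e.", claims something the theorem deliberately does not, and which your own last sentence confesses you cannot control.

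The rescue is structural rather than a better estimate: accept that $\Dpw v_k\rightharpoonup\bm{M}$ with $\bm{M}-\Dpw v=\br\chi_{\Omm}$, then observe that $\Hk v_k=\Dpw v_k-\rk$ has weak limit $\bm{M}-\br$, while $\Hinf v=\Dpw v-\br_\infty(\jump{\Npw v})=\left(\bm{M}-\br\chi_{\Omm}\right)-\br\chi_{\Omp}=\bm{M}-\br$. So $\Hk v_k\rightharpoonup\Hinf v$ holds precisely because the extra term $\br\chi_{\Omm}$ in the weak limit of $\Dpw v_k$ is exactly cancelled by the corresponding term in $\br$. Your final assertion therefore happens to be correct, but only through a cancellation you cannot see from your (false) intermediate claims, and your proposed route would not produce a valid proof.
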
 
\begin{proof}
Since $\Vk^1\subset \Vk^0$ for all $k\in\N$ and $\Vinfty^1\subset \Vinfty^0$, we consider the general case $s=0$, and handle the special case $s=1$ only where it is needed. 
We will also frequently use the fact that for any integer $k\geq \ell$, if a face $F\in\Fk \setminus \Flp$, then $h_k|_F \lesssim \norm{h_\ell \chi_{\Omega_{\ell}^{1-}}}_{L^\infty(\Om)}$. This is due to the fact that any element $K\in \Tk$ that contains $F$ must be included in $\Omega_{\ell}^{1-}$, for otherwise $F\in\Flp$ and there would be a contradiction.

\emph{Step 1. Compactness of values and gradients.}
The discrete Rellich--Kondrachov theorem for DG finite element spaces, see  \cite[Theorem~5.6]{DiPietroErn12}, shows that the sequence $\{v_k\}_{k\in\N}$ is relatively compact in $L^2(\Om)$, and thus, there exists a $v\in L^2(\Om)$ and a subsequence, to which we pass without change of notation, such that $v_k\tends v$ in $L^2(\Om)$ as $k\tends \infty$. Furthermore, after extending the functions $v_k$ and $v$ by zero, we further have $v_k \tends v$ in $L^2(\R^\dim)$ as $k\tends \infty$. The uniform boundedness of the sequence $\{v_k\}_{k\in\N}$ in $BV(\R^\dim)$, as shown by \cite[Lemma~5.2]{DiPietroErn12}, further implies that $v\in BV(\R^\dim)$.
Furthermore, for each $i\in\{1,\dots,\dim\}$, the sequence $\{\Npw_{x_i} v_k \}_{n\in\N}$ is uniformly bounded in both $BV(\Om)$ and in $L^{r}(\Om)$ for some $r>2$, see \cite[Lemma~2 \&~Theorem~4.1]{BuffaOrtner}, and thus, by compactness of the embedding of $BV(\Om)$ into $L^1(\Om)$, after passing to a further subsequence without change of notation, there is a $\bm{\sigma} \in L^2(\Om;\R^\dim)$ such that $\Npw v_k \tends \bm{\sigma}$ in $L^2(\Om)^{\dim}$ as $k\tends \infty$.
We also infer that the restriction $v|_K$ is a polynomial of degree at most $p$ for each $K\in\Tp$, since it is the limit of the sequence of polynomials $ \{v_k|_K\}_{k\in\N}$. Furthermore, the equivalence of norms in finite dimensional spaces and the fact that $\Npw v_k \tends \bm{\sigma}$ imply that $\bm{\sigma}|_K = \Npw v|_K$ for each $K\in\Tp$. In addition, this implies that $\jump{v_k}_F \tends \jump{v}_F$ for all $F \in \Fp$,  and $\jump{\Npw v_k}_F\tends \jump{\Npw v}_F$ for all $F\in \Fpi$, in any norm as $k\tends \infty$.

\emph{Step~2. Bounds on the jumps.}
We now prove that $\int_{\Fp} h_+^{-3}\abs{\jump{v}}^2 <\infty $ and $\int_{\Fpi} h_+^{-1} \abs{\jump{\Npw v}}^2<\infty $.
Recall that $\Sk$ and $\Sp$ denote the skeletons of the sets of faces $\Fk$ and $\Fp$ respectively.
Consider now the function $h_k^{-3}\abs{\jump{v_k}}^2\colon \Sk \tends \R$, and extend it by zero to $\Sp\setminus \Sk$.
Then, since $h_k|_F= h_+|_F$ whenever $k$ is sufficiently large for each $F\in \Fp$, we deduce that $h_k^{-3}\abs{\jump{v_k}}^2$ converges pointwise $\mathcal{H}^{d-1}$-a.e.\ to $h_+^{-3}\abs{\jump{v}}^2$ on $\Sp$.
Therefore, Fatou's Lemma implies that
\begin{equation}{\label{eq:weak_convergence_1}}
\int_{\Fp} h_+^{-3}\abs{\jump{v}}^2 = \int_{\Sp}h_+^{-3}\abs{\jump{v}}^2 \leq \liminf_{k\tends \infty}\int_{\Skp} h_k^{-3}\abs{\jump{v_k}}^2 \leq \liminf_{k\tends\infty}\norm{v_k}_k^2 < \infty .
 \end{equation}
 Similarly, $h_k^{-1}\abs{\jump{\Npw v_k}}^2$ converges $\mathcal{H}^{d-1}$-a.e.\ to $h_+^{-1}\abs{\jump{\Npw v}}^2$ on $\Spi$ and Fatou's Lemma shows that
 \begin{equation}\label{eq:weak_convergence_2}
\int_{\Fpi}h_+^{-1}\abs{\jump{\Npw v}}^2 = \int_{\Spi}h_+^{-1}\abs{\jump{\Npw v}}^2 \leq \liminf_{k\tends\infty}\int_{\Skpi}h_k^{-1}\abs{\jump{\Npw v_k}}^2 \leq \liminf_{k\tends\infty}\norm{v_k}_k^2 <\infty.
 \end{equation}

\emph{Step~3. Proof that $v\in H^1_D(\Om;\Tp)$.}
Next, we claim that $\int_{\Fk}\jump{v_k}(\bphi\cdot \bn)\tends \int_{\Fp}\jump{v}(\bphi\cdot \bn)$ as $k\tends \infty$ for any $\bphi \in \CidR$. Assuming this claim for the moment, we verify that the function~$v$ has a distributional derivative of the form~\eqref{eq:distderiv_H100Tp} where $\Npw v = \bm{\sigma}$ in $\Om$. Indeed, the convergence $v_k\tends v$ in $L^2(\R^\dim)$ implies that $\pair{Dv,\bphi}_{\R^\dim}=\lim_{k\tends\infty}\pair{Dv_k,\bphi}_{\R^\dim}$, and the convergence of the jumps and of $\Npw v_k \tends \bm{\sigma}$ in $L^2(\R^\dim;\R^\dim)$ also imply that
\begin{equation}
\begin{aligned}
\pair{Dv,\bphi}_{\R^\dim} &=\lim_{k\tends\infty} \left(\int_{\Om} \Npw v_k \cdot \bphi - \int_{\Fk} \jump{v_k}(\bphi\cdot\bn) \right)
\\ & = \int_{\Om} \bm{\sigma} \cdot \bphi-\int_{\Fp}\jump{v}(\bphi\cdot \bn) \quad \forall \bphi\in \CidR,
\end{aligned}
\end{equation}
which shows that $\Npw v = \bm{\sigma}$ and that $v\in H^1_D(\Om;\Tp)$.

Returning to the claim that $\int_{\Fk}\jump{v_k}(\bphi\cdot \bn)\tends \int_{\Fp}\jump{v}(\bphi\cdot \bn)$ as $k\tends \infty$, we choose an $\ell \in \N$ to be specified below, and for any $k\geq \ell$ we split the series according to
\begin{multline}\label{eq:weak_convergence_3}
\int_{\Fp}\jump{v}(\bphi\cdot\bn) - \int_{\Fk} \jump{v_k}(\bphi\cdot \bn) 
\\ = \int_{\Flp}\jump{v-v_k}(\bphi\cdot\bn) + \int_{\Fp\setminus\Flp}\jump{v}(\bphi\cdot\bn) - \int_{\Fk\setminus \Flp}\jump{v_k}(\bphi\cdot \bn).
\end{multline}
Note that, for any $\eps>0$, we may choose $\ell$ sufficiently large such that the second and third terms on the right-hand side of \eqref{eq:weak_convergence_3} are both bounded in absolute value by $\eps$ for any $k\geq \ell$. Indeed, for the second term this results from the fact that this represents the tail of a convergent series by~\eqref{eq:weak_convergence_1}, whereas for the third term, this follows from Lemma~\ref{lem:hjvanishes} and the bound $\left\lvert\int_{\Fk\setminus \Flp}\jump{v_k}(\bphi\cdot \bn) \right\rvert \lesssim
M \norm{h_\ell \chi_{\Omega_{\ell}^{1-}}}_{L^\infty(\Om)}\norm{\bphi}_{C(\overline{\Om};\R^\dim)} $ with $M\coloneqq \sup_{k\in\N}\norm{v_k}_k<\infty$.
Then, for any fixed $\ell\in \N$, the strong convergence of the jumps $\jump{v-v_k}$ over the finite set of faces $\Flp$ shows that the first term on the right-hand side vanishes of~\eqref{eq:weak_convergence_3} also vanishes as $k\tends \infty$. We can then choose $k$ large enough such that the left-hand side of \eqref{eq:weak_convergence_3} is bounded by, e.g., $3\eps$, and since $\eps$ is arbitrary, we see that $\int_{\Fk}\jump{v_k}(\bphi\cdot \bn)\tends \int_{\Fp}\jump{v}(\bphi\cdot \bn)$ as claimed.

\emph{Step~4. Weak convergence of Hessians and proof that $v\in \Vinfty^s$.}
The sequences of functions $\{\Dpw v_k\}_{k\in\N}$ and $\{\rk\}_{k\in\N}$ are uniformly bounded in $\Ldd$ owing to the uniform boundedness of $\{\norm{v_k}_k\}_{k\in\N}$. Therefore, there exist $\bm{M}$ and $\bm{r}$ in $\Ldd$ such that $\Dpw v_k \rightharpoonup \bm{M}$ and $\rk\rightharpoonup \bm{r}$ in $\Ldd$ as $k\tends\infty$.
Furthermore, it is easy to see that $\bm{r}|_{\Omp} = \bm{r}_\infty(\jump{\Npw v})|_{\Omp}$ since the restrictions $\rk|_K\tends \bm{r}_\infty(\jump{\Npw v})|_K$ (in any norm) for all $K\in\Tp$, owing to the strong convergence $\jump{\Npw v_k}_F\tends \jump{\Npw v}_F $ (in any norm) for all $F\in\Fpi$ shown in \emph{Step~1} above.

We now claim that the distributional derivative $D(\Npw v)$ is of the form given in~\eqref{eq:distderiv_h2} and in particular that
\begin{equation}\label{eq:weak_convergence_4}
\begin{aligned}
\pair {D(\Npw v),\bvphi}_\Om &=\lim_{k\to\infty}\left(\int_\Om \Dpw{v_k}:\bvphi-\int_{\Fki}\jump{\Dp{v_k}}{\cdot}(\bvphi\mathbf{n})\right)
\\ &=\int_\Om(\bm{M}-\bm{r}\chi_{\Om^-}):\bvphi-\int_{\Fpi}\jump{\Npw{v}}{\cdot}(\bvphi\mathbf{n}),
\end{aligned}
\end{equation}
for all $\bvphi\in \Cidd$, where $\chi_{\Omm}$ denotes the indicator function of $\Omm$.
Supposing momentarily that~\eqref{eq:weak_convergence_4} is given, by definition we get $\Dpw v= \bm{M}-\bm{r}\chi_{\Om^-} \in \Ldd$. Since $\bm{r}_\infty(\jump{\Npw v})$ vanishes on $\Omm$ and equals $\bm{r}$ on $\Omp$, we see that $\Hinf v = \Dpw v - \bm{r}_\infty(\jump{\Npw v}) = \bm{M}-\bm{r} $ is then the weak limit of the sequence $\Hk  v_k = \Dpw v_k - \rk$ in $\Ldd$.
 Furthermore, the bounds~\eqref{eq:weak_convergence_1} and \eqref{eq:weak_convergence_2} above, and the fact that $v\in L^2(\Om)$, $\Npw v \in L^2(\Om;\R^\dim)$ and $\Dpw v \in \Ldd$ together imply that $\norm{v}_{\HD}<\infty$, thus showing that $v\in \HD$. Since~$v$ is piecewise polynomial over $\Tp$, it follows that $v\in \Vinfty^0$. For the special case $s=1$, we additionally have $v\in H^1_0(\Om)$ owing to the fact that the functions $v_k$ are then uniformly bounded in $H^1_0(\Om)$, which additionally implies that $v\in \Vinfty^1$.
 
It remains only to show~\eqref{eq:weak_convergence_4}. 
Consider a fixed but arbitrary $\bvphi\in \Cidd$, and let $\bvphi_k $ be its piecewise mean-value projection on $\Tk$, i.e.\ $\bvphi_k|_K\coloneqq \overline{\bvphi|_K}$ for each $K\in\Tk$, where the mean-value is taken component-wise.
The first equality in~\eqref{eq:weak_convergence_4} follows directly from $\pair{D(\Npw v_k),\bvphi}_\Om\tends \pair{D(\Npw v),\bvphi}_\Om$ owing to the convergence $\Npw v_k \tends \Npw v$ in $\Ld$.
The limit of the jump terms in~\eqref{eq:weak_convergence_4} is determined as follows. The triangle inequality gives
\begin{multline}\label{eq:weak_convergence_7}
\left\lvert\int_{\Fpi}\jump{\Npw v}\cdot(\bvphi\bn)+\int_{\Om} \chi_{\Omm}\br:\bvphi - \int_{\Fki}\jump{\Npw v_k}\cdot(\bvphi\bn) \right\rvert
\\ \leq \left\lvert \int_{\Fpi\setminus\Flpi} \jump{\Npw v}\cdot(\bvphi \bn)\right\rvert + \left\lvert \int_{\Flpi}\jump{\Npw (v-v_k)}\cdot(\bvphi\bn) \right\rvert 
\\ +\left\lvert \int_{\Fki\setminus \Flpi}\jump{\Npw v_k}:(\bvphi\bn) - \int_\Om \chi_{\Omm}\br :\bvphi \right\rvert.
\end{multline}
We show that the terms on the right-hand side of \eqref{eq:weak_convergence_7} become vanishingly small for $k$ and~$\ell$ sufficiently large, and hence the left-hand side vanishes in the limit as $k\tends\infty$.
Let $\eps>0$ be arbitrary; it follows from~\eqref{eq:weak_convergence_2} that we can start by initially choosing $\ell$ large enough such that the first term $\abs{\int_{\Fpi\setminus\Flpi} \jump{\Npw v}\cdot(\bvphi \bn)}<\eps$.
Turning to the last term on the right-hand side of \eqref{eq:weak_convergence_7}, for each $k\geq \ell$, consider the splitting of the lifting operator $\bm{r}_k$ into contributions from faces in $\Flp$ and $\Fk\setminus\Flp$, i.e.\
 \begin{equation}\label{eq:weak_convergence_5}
 \begin{aligned}
\bm{r}_k = \bm{r}_{k,\ell}^+ + \bm{r}_{k,\ell}^{-},
&&& \bm{r}_{k,\ell}^+\coloneqq \sum_{F\in\Flp} \bm{r}_k^F, &&& \bm{r}_{k,\ell}^{-}\coloneqq \sum_{F\in\Fk\setminus\Flp} \br_k^{F}.
\end{aligned}
 \end{equation}
By definition, any face $F\in\Flp$ is a face of only elements that belong to $\cT_{\ell}^+$ and thus $\supp \br_k^{F}(\jump{\Npw v_k}) \subset \Om_{\ell}^{+}$ for all $k\geq \ell$ and all $F\in\Flp$, and thus $\br_{k,\ell}^+(\jump{\Npw v_k})$ vanishes a.e.\ on $\Om_{\ell}^{-}$.
Furthermore, since any element of $\Tk$ that contains a face belonging to $\Fk\setminus \Flp$ must be a subset of $\Om_{\ell}^{1-}$, we see that $\supp \br_{k,\ell}^{-}(\jump{\Npw v_k}) \subset \Om_{\ell}^{1-}$ for all $k\geq \ell$. Additionally, we have the uniform bounds $\norm{\rk}_{\Om}+\norm{\br_{k,\ell}^{+}(\jump{\Npw v_k})}_{\Om}\lesssim\normk{v_k}\leq M$ for all $k,\ell\in\N$, where $M\coloneqq\sup_{k\in\N}\normk{v_k}$.
The definition of the lifting operators and the supports of the terms in the splitting of~\eqref{eq:weak_convergence_5} imply that 
\begin{multline}\label{eq:weak_convergence_6}
 \int_{\Fki\setminus\Flpi}\jump{\Npw v_k}\cdot \left\{ \bvphi_k\bn \right\}+ \int_{\Fkb\setminus\Flp}\jump{\Npw_T v_k}\cdot\left\{ \bvphi_k\bn \right\} = 
\int_{\Om_{\ell}^{1-}} \br_{k,\ell}^{-}(\jump{\Npw v_k}):\bvphi_k
\\ = \int_{\Om_{\ell}^{1-}} \rk:\bvphi_k - \int_{\Om_{\ell}^{1-}\setminus\Om_{\ell}^{-}} \br_{k,\ell}^+(\jump{\Npw v_k}):\bvphi_k.
\end{multline}
Lemma~\ref{lem:hjvanishes} also shows that $\abs{\int_{\Om^{1-}_\ell\setminus\Om_{\ell}^{-}}\br_{k,\ell}^+(\jump{\Npw v_k}){:}\bvphi_k }\lesssim \abs{\Om^{1-}_{\ell}\setminus\Omm}^{\frac{1}{2}} M \norm{\bvphi}_{C(\overline{\Om};\R^{\dim\times\dim})}<\eps$ for all $k\geq \ell$ whenever $\ell$ is chosen to be sufficiently large.
We can also choose $\ell$ large enough such that $\abs{\int_{\Fki\setminus\Flpi}\jump{\Npw v_k}{\cdot}\{(\bvphi-\bvphi_k)\bn)\}}\lesssim M \norm{h_\ell\Npw \bvphi}_{\Om_{\ell}^{1-}}<\eps$ since $\norm{h_\ell \chi_{\Om_{\ell}^{1-}}}_{L^\infty(\Om)}\tends 0$ as $\ell\tends \infty$ by Lemma~\ref{lem:hjvanishes}.
Also, since $\bvphi$ is compactly supported in $\Omega$, we get $\abs{\int_{\Fkb\setminus\Flp}\jump{\Npw_T v_k}\cdot\left\{ \bvphi_k\bn \right\}}\lesssim M\norm{h_\ell \Npw \bvphi}_{\Om_{\ell}^{1-}}<\eps$ for all $k\geq \ell$ sufficiently large.
Furthermore, by weak convergence of $\rk\rightharpoonup \br$ in $\Ldd$ and by strong convergence of $\bvphi_k\chi_{\Om^{1-}_\ell}\tends \bvphi \chi_{\Omm}$ in $\Ldd$ as $k\geq \ell\tends \infty$, we can also choose $\ell$ large enough such that $\abs{ \int_{\Om^{1-}_\ell} \rk {:} \bvphi_k - \int_{\Om} \chi_{\Omm}\br{:}\bvphi  } < \eps$ for all $k\geq \ell$.
Thus, by addition and subtraction of the terms in \eqref{eq:weak_convergence_6}, we infer from the above inequalities that $\lvert \int_{\Fki\setminus \Flpi}\jump{\Npw v_k}{\cdot}(\bvphi\bn) - \int_\Om  \chi_{\Omm}\br{:}\bvphi \rvert < 4 \eps$ for all $k\geq \ell$, which bounds the last term on the right-hand side of~\eqref{eq:weak_convergence_7}.
Finally, strong convergence of $\Npw v_k|_K\tends \Npw v|_K$ (in any norm) for each element $K\in\Tp$, and the finiteness of the set of faces $\Flpi$, imply that the second term in the right-hand side of \eqref{eq:weak_convergence_7} also vanishes in the limit $k\tends \infty$, for any $\ell\in\N$.
Therefore, we conclude that the left-hand side of \eqref{eq:weak_convergence_7} vanishes in the limit $k\tends \infty$, which then gives \eqref{eq:weak_convergence_4} upon recalling that $\Dpw v_k \rightharpoonup \bm{M}$ in $\Ldd$. This completes the proof.
\Fqed\end{proof}

\begin{remark}
It is easy to construct examples of sequences of functions $\{v_k\}_{k\in\N}$ satisfying the conditions of Theorem~\ref{thm:weak_convergence} such that $\bm{r}$ is nonvanishing on~$\Omm$. This explains the appearance of the indicator functions $\chi_{\Omp}$ in the equation $\bm{r}\chi_{\Omp} = \bm{r}_{\infty}(\jump{\nabla v})$ in the statement of Theorem~\ref{thm:weak_convergence} and also the appearance of the indicator function $\chi_{\Omm}$ in~\eqref{eq:weak_convergence_4}.
\end{remark}

\section{The limit problem and proof of convergence}\label{sec:limit_problem_proof}
\subsection{The limit problem}\label{sec:asymptotic_consistency}
The convergence of the sequence of numerical solutions $\{u_k\}_{k\in \N}$ from~\eqref{eq:num_scheme} is shown by introducing a suitable notion of a limit problem on the space $\Vinfty^s$.
We start by extending the definition of the operator $\Fg$ in~\eqref{eq:Fg_def} to functions in $\HD$ by $\Fg[v] \coloneqq \inf_{\alpha\in\sA} \sup_{\beta\in\sB} \left[\gamma^{\ab}(a^{\ab}:\Dpw v - f^{\ab}) \right]$ i.e.\ we use the notion of Hessian $\Dpw v$ defined by~\eqref{eq:Hessian_notation} inside the nonlinear operator $\Fg$. 
The operator $\Fg$ is then a Lipschitz continuous mapping from $\HD$ to $L^2(\Om)$, and the inequalities~\eqref{eq:cordes_ineq1} and \eqref{eq:cordes_ineq2} extend to functions in the sum space $\HD+\Vk^s$ for each $k\in\N$.

Let the nonlinear form $A_\infty(\cdot;\cdot)\colon \Vinfty^s\times\Vinfty^s\tends \R$ be defined by
\begin{equation}\label{eq:Ainfty_def}
\begin{aligned}
A_\infty(v;w)\coloneqq \int_\Om \Fg[v]\Delta_\infty w + \theta \Sinfty(v,w) + J_\infty^{\sigma,\rho}(v,w) &&&\forall v,\,w \in \Vinfty^s,
\end{aligned}
\end{equation}
where the bilinear forms $\Sinfty\colon \Vinfty^s\times \Vinfty^s\tends \R$ and $J_\infty^{\sigma,\rho}\colon \Vinfty^s\times \Vinfty^s\tends \R$ are defined by
\begin{align}
\Sinfty(v,w)&\coloneqq \int_\Om \left[\Hinf v {:} \Hinf w - \Delta_\infty v \Delta_\infty v \right] \notag
\\ & \quad + \int_\Om \left[\Tr\br_\infty(\jump{\Npw v}) \Tr\br_\infty(\jump{\Npw w}) - \br_\infty(\jump{\Npw v}){:}\br_\infty(\jump{\Npw w})  \right],\\
J_\infty^{\sigma,\rho}(v,w) &\coloneqq \int_{\Fpi} \sigma h_+^{-1} \jump{\Npw v}\cdot\jump{\Npw w} + \int_{\Fp} \rho h_+^{-3}\jump{v} \jump{w},\label{eq:limit_jumps}
\end{align}
for all functions $v$ and $w\in\Vinfty^s$, where it is recalled that the lifting operators $\br_\infty$, lifted Hessian $\Hinf$ and Laplacian $\Delta_\infty$ are defined in~\eqref{eq:H_infty_def}. 
The definition of $\Sinfty(\cdot;\cdot)$ is motivated by the identity~\eqref{eq:lifted_Bk_form} in Lemma~\ref{lem:lifted_Bk_form}, and this will be used later in the analysis.
We emphasize that the parameter $\theta$ in \eqref{eq:Ainfty_def} and the penalty parameters~$\sigma$ and~$\rho$ appearing in~\eqref{eq:limit_jumps} are the same as the ones used in the numerical scheme in Section~\ref{sec:num_schemes}.
Using the bounds on the lifting operators in~\eqref{eq:infty_lifting_boundedness} and~\eqref{eq:infty_lifting_boundedness_2} and the extension of~\eqref{eq:cordes_ineq2} to functions in $\HD$, see above, it is then straightforward to show that the nonlinear form $A_\infty(\cdot;\cdot)$ is Lipschitz continuous on $\Vinfty^s$, i.e.\
\begin{equation}\label{eq:Ainfty_lipschitz}
\begin{aligned}
\abs{A_\infty(z;w)-A_\infty(v;w) }\lesssim \norm{z-v}_{\HD}\norm{w}_{\HD} &&& \forall z,\, v,\,w \in \Vinfty^s.
\end{aligned}
\end{equation}

The following Lemma further motivates the above definitions by showing that the nonlinear forms $A_k$ are asymptotically consistent with the limit nonlinear form $A_\infty$ with respect to strongly convergent sequences in the first argument and weakly convergent (sub)sequences in the second argument. Recall that $\chi_{\Omp}$ denotes the indicator function of the set $\Omp$.

\begin{lemma}[Asymptotic consistency]\label{lem:asymptotic_consistency}
Let $\{w_{k_j}\}_{j\in\N}$ and $\{v_{k_j}\}_{j\in\N}$ be two sequences of functions, such that $w_{k_j},v_{k_j}\in V_{k_j}^s$ for each $j\in\N$, and such that $\sup_{j\in\N}\left[\norm{w_{k_j}}_{k_j}+\norm{v_{k_j}}_{k_j}\right]<\infty$.
Suppose that there exists a $v\in\Vinfty^s$ such that $\normk{v-v_{k_j}}\tends 0$ as $j\tends \infty$.
Suppose also that there exists a $w\in\Vinfty^s$ and a $\br\in\Ldd$ such that $\br \chi_{\Omp} = \br_\infty(\jump{\Npw w})$ a.e.\ in $\Om$, and such that $v_{k_j}\tends v$ in $L^2(\Om)$, $\Npw v_{k_j}\tends \Npw v$ in $\Ld$, $\bm{H}_{k_j}w_{k_j} \rightharpoonup \Hinf w$ and $\br_{k_j}(\jump{\Npw w_{k_j}})\rightharpoonup \br $ in $\Ldd$ as $j\tends \infty$. Then
\begin{equation}\label{eq:asymptotic_consistency}
\lim_{j\tends\infty} A_{k_j}(v_{k_j};w_{k_j}) = A_\infty(v;w).
\end{equation}
\end{lemma}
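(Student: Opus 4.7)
The plan is to decompose $A_{k_j}(v_{k_j};w_{k_j})$ into three parts: the nonlinear operator term $T_1\coloneqq \int_\Om \Fg[v_{k_j}]\Delta_{k_j}w_{k_j}$, the stabilization term $T_2\coloneqq\theta\, S_{k_j}(v_{k_j},w_{k_j})$, and the jump penalty $T_3\coloneqq J_{k_j}^{\sigma,\rho}(v_{k_j},w_{k_j})$, and to prove convergence of each piece separately to the corresponding term of $A_\infty(v;w)$.

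For $T_1$, the hypothesis $\normk{v-v_{k_j}}\to 0$ yields strong $L^2$-convergence of $\Dpw v_{k_j}$, $\Npw v_{k_j}$ and $v_{k_j}$; combined with the Lipschitz bound~\eqref{eq:cordes_ineq2} (extended to $\HD+\Vk^s$), this gives $\Fg[v_{k_j}]\to \Fg[v]$ strongly in $L^2(\Om)$. Since $\Delta_{k_j}w_{k_j}=\Tr(\bm{H}_{k_j}w_{k_j})\rightharpoonup\Tr(\Hinf w)=\Delta_\infty w$ weakly in $L^2(\Om)$ by hypothesis, a strong-times-weak pairing gives $T_1\to\int_\Om \Fg[v]\Delta_\infty w$.

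For $T_2$, I apply Lemma~\ref{lem:lifted_Bk_form} to rewrite $S_{k_j}(v_{k_j},w_{k_j})$ as an $L^2(\Om)$-pairing of $\{\bm{H}_{k_j}v_{k_j},\Delta_{k_j}v_{k_j},\br_{k_j}(\jump{\Npw v_{k_j}})\}$ with the analogous $w$-quantities. Lemma~\ref{lem:lifted_Hess_convergence} yields strong $L^2$-convergence of the $v$-side to $\{\Hinf v,\Delta_\infty v,\br_\infty(\jump{\Npw v})\}$, while the hypothesis supplies weak $L^2$-convergence of the $w$-side. Strong-times-weak pairing, together with the identity $\br\chi_{\Omp}=\br_\infty(\jump{\Npw w})$ and the fact that $\br_\infty(\jump{\Npw v})$ is supported in $\Omp$, reduces each resulting limit to the corresponding term of $\Sinfty(v,w)$.

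The main obstacle is $T_3$, because the penalty cannot be recovered from the liftings alone (the lifting degree $q$ may be strictly less than the jump polynomial degree $p-1$, so weak convergence of $\br_{k_j}(\jump{\Npw w_{k_j}})$ does not directly control the full jumps of $w_{k_j}$). I first apply Cauchy--Schwarz to $J_{k_j}^{\sigma,\rho}(v-v_{k_j},w_{k_j})$ and use $|v-v_{k_j}|_{J,k_j}\to 0$ together with $\sup_j |w_{k_j}|_{J,k_j}<\infty$ to reduce the task to showing $J_{k_j}^{\sigma,\rho}(v,w_{k_j})\to J_\infty^{\sigma,\rho}(v,w)$. Given $\varepsilon>0$, Corollary~\ref{cor:jump_term_limits} supplies a finite subset $\tilde{\mathcal{F}}\subset\Fp$ such that the weighted jump norms of $v$ on $\Fp\setminus\tilde{\mathcal{F}}$ are at most $\varepsilon$; for $j$ sufficiently large, $\tilde{\mathcal{F}}\subset\cF^{\dagger}_{k_j}$, hence $h_{k_j}=h_+$ on $\tilde{\mathcal{F}}$, and the contribution from $\cF_{k_j}\setminus\tilde{\mathcal{F}}$ is bounded by $\sqrt{\varepsilon}\,M$ with $M\coloneqq\sup_j\normk{w_{k_j}}$ via Cauchy--Schwarz applied to both factors (the $v$-side contribution being controlled again by Corollary~\ref{cor:jump_term_limits}). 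For the finite set $\tilde{\mathcal{F}}$, I extract a subsequence along which $w_{k_j}\to w$ in $L^2(\Om)$ using the discrete Rellich--Kondrachov theorem for DG spaces; uniqueness of the $L^2$-limit as $w$ (identified by comparing with the prescribed weak limits via the argument of Theorem~\ref{thm:weak_convergence}) and the sub-subsequence principle upgrade this to convergence of the full sequence. Since each element adjacent to a face in $\tilde{\mathcal{F}}$ eventually lies in $\Tp$ by Lemma~\ref{lem:eventuallyinT+}, $w_{k_j}$ restricted to such an element is a polynomial of degree at most $p$, and equivalence of norms on $\Pp$ lifts $L^2$-convergence to convergence in any norm of the face traces, giving $\jump{\Npw w_{k_j}}_F\to \jump{\Npw w}_F$ and $\jump{w_{k_j}}_F\to\jump{w}_F$ for each $F\in\tilde{\mathcal{F}}$. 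The finite sum converges by linearity, and sending $\varepsilon\to 0$ completes the proof.
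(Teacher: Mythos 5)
Your decomposition into the nonlinear term, stabilization term, and jump penalty, and your treatment of the first two, coincide with the paper's proof: strong $L^2$-convergence of $\Fg[v_{k_j}]$ paired with weak convergence of $\Delta_{k_j}w_{k_j}$ for the nonlinear term, and Lemma~\ref{lem:lifted_Bk_form} combined with Lemma~\ref{lem:lifted_Hess_convergence}, the support property of $\br_\infty(\jump{\Npw v})$, and the hypothesis $\br\chi_{\Omp}=\br_\infty(\jump{\Npw w})$ for the stabilization term. Your observation that the penalty term cannot be recovered from the liftings alone because $q$ may be less than $p-1$ is correct and is not spelled out in the paper; and your reduction, via Cauchy--Schwarz and the vanishing tails from Corollary~\ref{cor:jump_term_limits}, to the finite face set $\tilde{\mathcal F}\subset\Flpi$ is the same as in the paper.

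The gap is in the identification step for $T_3$. After extracting a subsequence converging in $L^2(\Om)$, you claim to identify the limit as $w$ ``by comparing with the prescribed weak limits via the argument of Theorem~\ref{thm:weak_convergence}.'' This does not follow from the stated hypotheses. Applying Theorem~\ref{thm:weak_convergence} to the subsequence produces some $\hat{w}\in\Vinfty^s$ and $\hat{\br}\in\Ldd$ with $\hat{\br}\chi_{\Omp}=\br_\infty(\jump{\Npw\hat{w}})$; uniqueness of weak limits then gives $\Hinf\hat{w}=\Hinf w$ and $\hat{\br}=\br$, hence $\br_\infty(\jump{\Npw\hat{w}})=\br_\infty(\jump{\Npw w})$ and $\Dpw\hat{w}=\Dpw w$. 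But $\Dpw(\hat{w}-w)=0$ and $\br_\infty(\jump{\Npw(\hat{w}-w)})=0$ do not force $\hat{w}=w$: a nonzero $z\in\Vinfty^s$ that is piecewise affine over $\Tp$ has $\Dpw z=0$, and since the face liftings $\br^F_\infty$ only see the $L^2(F)$-projection onto $\mathbb{P}_q$ of the jumps and have overlapping supports, it is not established (and there is no reason to expect in general) that $\br_\infty(\jump{\Npw z})=0$ forces $z=0$. So your sub-subsequence argument cannot conclude.

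For what it is worth, the paper's own proof does not face this problem only because it silently uses the convergence $\Npw w_{k_j}\tends\Npw w$ in $\Ld$ (and $w_{k_j}\tends w$ in $L^2$) as though they were hypotheses, which they are in every place the lemma is actually invoked (Theorem~\ref{thm:weak_convergence} supplies them in Lemma~\ref{lem:uinfty_convergence}, and Theorem~\ref{thm:limit_space_characterization} plus Lemma~\ref{lem:lifted_Hess_convergence} supply them in Lemma~\ref{lem:limit_strong_monotonicity}). The clean repair is therefore not to try to derive the strong convergence of $w_{k_j}$ from the weak hypotheses, but to add $w_{k_j}\tends w$ in $L^2(\Om)$ and $\Npw w_{k_j}\tends\Npw w$ in $\Ld$ to the hypothesis set, after which your finite-face argument (polynomial norm equivalence on elements of $\Tp$, passing to the limit on the finite set, then letting $\eps\to 0$) goes through exactly as in the paper.
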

 \begin{proof}
  First, note that since the lifted Laplacian is defined as the trace of the lifted Hessians, its follows immediately that $\Delta_{k_j} w_{k_j}\rightharpoonup \Delta_\infty w$ in $L^2(\Om)$ as $j\tends \infty$.
 Therefore, considering the nonlinear term in $A_k(\cdot;\cdot)$, we use the strong convergence $\Fg[v_{k_j}]\tends \Fg[v]$ in $L^2(\Om)$ and the weak convergence of the lifted Laplacians to get $\int_\Om \Fg[v_{k_j}]\Delta_{k_j}w_{k_j} \tends \int_{\Om}\Fg[v]\Delta w$ as $j\tends \infty$.
 We next show convergence of the remaining terms in the nonlinear forms $A_k(\cdot;\cdot)$ as follows.

We now turn towards the term $S_{k_j}(v_{k_j},w_{k_j})$.
Lemma~\ref{lem:lifted_Hess_convergence} shows that $\bm{H}_{k_j}v_{k_j}\tends \Hinf v$ and that $\br_{k_j}(\jump{\Npw v_{k_j}})\tends \br_\infty(\jump{\Npw v})$ in $\Ldd$ as $k\tends \infty$. 
Therefore we infer that
\[
\begin{aligned}
\int_\Om \Hinf v:\Hinf w & =\lim_{j\tends\infty}\int_\Om \bm{H}_{k_j} v_{k_j}: \bm{H}_{k_j} w_{k_j},
\\  \int_\Om \Delta_\infty w \Delta_\infty v &= \lim_{j\tends\infty}\int_\Om \Delta_{k_j} v_{k_j}\Delta_{k_j}w_{k_j}.
  \end{aligned}
\]
Next, recall that $\br_\infty(\jump{\Npw v})$ vanishes on $\Omm$ for any $v\in\Vinfty^s$, and since the weak limit $\br$ of the sequence $\br_{k_j}(\jump{\Npw w_{k_j}})$ satisfies $\br|_{\Omp}=\br_{\infty}(\jump{\Npw w})$ by hypothesis, we obtain the identities $\int_\Om \br_\infty(\jump{\Npw v}):\br=\int_{\Om} \br_\infty(\jump{\Npw v}):\br_\infty(\jump{\Npw w})$ and $\int_\Om\Tr \br_\infty(\jump{\Npw v}) \Tr \br= \int_\Om\Tr \br_\infty(\jump{\Npw v}) \Tr \br_\infty(\jump{\Npw w})$.
Therefore, we conclude that 
\begin{align*}
\int_\Om \br_\infty(\jump{\Npw v}):\br_\infty(\jump{\Npw w}) &= \lim_{j\tends\infty}\int_\Om \br_{k_j}(\jump{\Npw v_{k_j}}):\br_{k_j}(\jump{\Npw w_{k_j}}), 
\\ \int_\Om\Tr \br_\infty(\jump{\Npw v}) \Tr \br_\infty(\jump{\Npw w})&=\lim_{j\tends\infty}\int_\Om \Tr\br_{k_j}(\jump{\Npw v_{k_j}}) \Tr\br_{k_j}(\jump{\Npw w_{k_j}}).
\end{align*}
Therefore, using Lemma~\ref{lem:lifted_Bk_form} and the above limits, we obtain
\begin{equation}\label{eq:asymptotic_consistency_2}
\lim_{j\tends\infty} S_{k_j}(v_{k_j},w_{k_j}) = S_\infty(v,w).
\end{equation}
It remains only to show the convergence of the jumps $J_{k_j}^{\sigma,\rho}(v_{k_j},w_{k_j})\tends J_\infty^{\sigma,\rho}(v,w)$ as $j\tends \infty$. It follows from the strong convergence of the sequence $v_{k_j}$ to $v$ that it is enough to consider the limits of $\int_{\cF_{k_j}^I} h_{k_j}^{-1} \jump{\Npw v}\cdot\jump{\Npw w_{k_j}}$ and $\int_{\cF_{k_j}}h_{k_j}^{-3}\jump{v}\jump{w_{k_j}}$.
Let $\eps>0$ be arbitrary; then Corollary~\ref{cor:jump_term_limits} and the finiteness of $\norm{v}_{\HD}$ implies that there is a $\ell\in\N$ such that $\int_{\Fpi\setminus \Flpi} h_+^{-1}\abs{\jump{\Npw v}}^2 < \eps$ and $\int_{\Fki\setminus\Fksi}h_+^{-1} \abs{\jump{\Npw v}}^2<\eps$ for all $k\geq \ell$, so that 
\begin{equation*}
\left\lvert \int_{\cF_{k_j}^I} h_{k_j}^{-1} \jump{\Npw v}\cdot\jump{\Npw w_{k_j}} - \int_{\Flpi} h_{+}^{-1}\jump{\Npw v}\cdot \jump{\Npw w_{k_j}} \right\rvert 
 = \left\lvert\int_{\cF_{k_j}^{I}\setminus\Flpi} h_{k_j}^{-1} \jump{\Npw v}\cdot \jump{\Npw w_{k_j}}\right\rvert \leq 2 M \eps ,
\end{equation*}
where $M\coloneqq\sup_{k\in\N}\normk{w_k}$, with the inequality obtained by using the Cauchy--Schwarz inequality and the disjoint partitioning $\Fki\setminus \Flpi =  (\Fki\setminus\Fksi)\cup(\Fksi\setminus \Flpi)$ for all $k\geq \ell$.
Note that $\int_{\Flpi} h_{+}^{-1}\jump{\Npw v}\cdot \jump{\Npw w_{k_j}}$ converges to $\int_{\Flpi} h_+^{-1} \jump{\Npw v}\cdot \jump{\Npw w} $ as $j\tends\infty$ for each $\ell\in\N$ since the convergence of the piecewise polynomials $\Npw w_{k_j} \tends \Npw w$ in $\Ld$ implies that $\jump{\Npw w_{k_j}}\tends \jump{\Npw w}$ for each never-refined face $F\in\Flpi$.
Passing first to the limit $j\tends \infty$ followed by $\ell\tends \infty$, we therefore obtain $ \int_{\cF_{k_j}^I} h_{k_j}^{-1} \jump{\Npw v}\cdot\jump{\Npw w_{k_j}} \tends \int_{\Fpi} h_+^{-1}\jump{\Npw v}\cdot\jump{\Npw w}$ as $j\tends \infty$.
A similar argument shows that $\int_{\cF_{k_j}}h_{k_j}^{-3}\jump{v}\jump{w_{k_j}} \tends \int_{\Fp} h_+^{-3}\jump{v}\jump{w}$ and thus $J_{k_j}^{\sigma,\rho}(v_{k_j},w_{k_j})\tends J_\infty^{\sigma,\rho}(v,w)$ as $j\tends\infty$, thereby completing the proof.
\Fqed\end{proof}

We are now able to prove that the nonlinear form $A_\infty(\cdot;\cdot)$ is strongly monotone with the same choices of penalty parameters $\rho$ and $\sigma$ used for the numerical scheme.
\begin{lemma}\label{lem:limit_strong_monotonicity}
The nonlinear forms $A_\infty(\cdot;\cdot)$ is strongly monotone on $\Vinfty^s$, and satisfies in particular
\begin{equation}\label{eq:limit_strong_monotonicity}
\begin{aligned}
\frac{1}{\cmon} \norm{w-v}_{\HD}^2 \leq A_\infty(w;w-v)-A_\infty(v;w-v) &&&\forall v,\,w \in \Vinfty^s,
\end{aligned}
\end{equation}
where the constant $\cmon>0$ is the same as in~\eqref{eq:strong_monotonicity}.
\end{lemma}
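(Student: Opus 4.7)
The plan is to derive the limit strong monotonicity bound by passing to the limit in the discrete strong monotonicity inequality~\eqref{eq:strong_monotonicity} along suitable approximating sequences. Fix arbitrary $v,w\in\Vinfty^s$. By Theorem~\ref{thm:limit_space_characterization}, there exist sequences $v_k,w_k\in\Vk^s$ such that $\normk{v-v_k}\to0$ and $\normk{w-w_k}\to0$ as $k\tends\infty$, and with $\sup_{k\in\N}(\normk{v_k}+\normk{w_k})<\infty$. Applying~\eqref{eq:strong_monotonicity} to each pair $v_k,w_k\in\Vk^s$ yields
\[
\frac{1}{\cmon}\normk{w_k-v_k}^2\leq A_k(w_k;w_k-v_k)-A_k(v_k;w_k-v_k) \quad\forall k\in\N.
\]
The task then reduces to showing that the left-hand side has the appropriate $\liminf$, and that the right-hand side converges to $A_\infty(w;w-v)-A_\infty(v;w-v)$.

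For the left-hand side, I would use the triangle inequality $\normk{w_k-v_k}\ge \normk{w-v}-\normk{w-w_k}-\normk{v-v_k}$ together with Corollary~\ref{cor:jump_term_limits}, which gives $\normk{w-v}\to\norm{w-v}_{\HD}$ (since $w-v\in\Vinfty^s$), to conclude that $\liminf_{k\tends\infty}\normk{w_k-v_k}\ge\norm{w-v}_{\HD}$.

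For the right-hand side, I would verify the hypotheses of the asymptotic consistency result, Lemma~\ref{lem:asymptotic_consistency}, separately for each of the two nonlinear form evaluations. The strong convergence $\normk{w-w_k}\to0$ implies, via Lemma~\ref{lem:lifted_Hess_convergence}, the strong convergences $\bm{H}_k w_k\to\Hinf w$ and $\br_k(\jump{\Npw w_k})\to\br_\infty(\jump{\Npw w})$ in $\Ldd$, which in particular gives the required weak convergence with $\br=\br_\infty(\jump{\Npw w})$ satisfying trivially $\br\chi_{\Omp}=\br_\infty(\jump{\Npw w})$ since $\br_\infty(\jump{\Npw w})$ is already supported in $\Omp$. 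The second argument $w_k-v_k\in V_k^s$ likewise satisfies $\normk{(w-v)-(w_k-v_k)}\to0$, so Lemma~\ref{lem:asymptotic_consistency} applies to give $A_k(w_k;w_k-v_k)\to A_\infty(w;w-v)$. The same reasoning with $v_k$ replacing $w_k$ in the first argument yields $A_k(v_k;w_k-v_k)\to A_\infty(v;w-v)$.

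Passing to $\liminf$ on both sides of the discrete inequality then gives~\eqref{eq:limit_strong_monotonicity}. The only delicate point is matching the constant $\cmon$ exactly: this works automatically because the left-hand side is handled by $\liminf$ (giving $\ge$) while the right-hand side converges as an honest limit, and no intermediate constants are introduced. I do not foresee a serious obstacle beyond carefully checking the hypotheses of Lemma~\ref{lem:asymptotic_consistency} for both terms on the right.
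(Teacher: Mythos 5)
Your proposal is correct and follows essentially the same path as the paper's proof: approximate $v,w$ by sequences in $\Vk^s$ via Theorem~\ref{thm:limit_space_characterization}, apply the discrete strong monotonicity~\eqref{eq:strong_monotonicity}, handle the left-hand side via Corollary~\ref{cor:jump_term_limits} (the paper in fact obtains a genuine limit $\normk{w_k-v_k}^2\to\norm{w-v}^2_{\HD}$ by the triangle inequality combined with the monotone convergence $\normk{w-v}\to\norm{w-v}_{\HD}$, whereas your $\liminf$ suffices equally well), and pass to the limit on the right-hand side by combining Lemma~\ref{lem:lifted_Hess_convergence} with Lemma~\ref{lem:asymptotic_consistency}. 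One small presentational point worth tightening: in verifying the hypotheses of Lemma~\ref{lem:asymptotic_consistency} for $A_k(w_k;w_k-v_k)$, the weak-convergence and $\br\chi_{\Omp}=\br_\infty(\jump{\Npw\cdot})$ conditions concern the second argument, so you need $\bm{H}_k(w_k-v_k)\rightharpoonup\Hinf(w-v)$ and $\br\chi_{\Omp}=\br_\infty(\jump{\Npw(w-v)})$ rather than the corresponding statements for $w_k$; these follow exactly as you indicate from $\normk{(w-v)-(w_k-v_k)}\to0$ and Lemma~\ref{lem:lifted_Hess_convergence}, so there is no gap, only a mislabelling of which sequence the hypotheses attach to.
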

\begin{proof}
Theorem~\ref{thm:limit_space_characterization} show that for any $v$ and $w \in \Vinfty^s$, there exist sequences of functions $\{v_k\}_{k\in\N}$ and $\{w_k\}_{k\in\N}$ such that $v_k$, $w_k\in\Vk^s$ for all $k\in\N$, and such that $\normk{v-v_k}+\normk{w-w_k} \tends 0$ as $k\tends \infty$. 
Then, Lemma~\ref{lem:lifted_Hess_convergence} on the convergence of the lifting operators implies that the sequences of functions $\{v_k\}_{k\in\N}$, $\{w_k\}_{k\in\N}$ and $\{w_k-v_k\}_{k\in\N}$ satisfy the hypotheses of Lemma~\ref{lem:asymptotic_consistency}.
Therefore, we infer that
\begin{equation}\label{eq:limit_strong_monotonicity_1}
\begin{split}
\frac{1}{\cmon}\norm{w-v}_{\HD}^2&=\lim_{k\tends\infty} \frac{1}{\cmon}\normk{w_k-v_k}^2 
\\ &\leq \lim_{k\tends\infty}\left(A_k(w_k,w_k-v_k)-A_k(v_k,w_k-v_k) \right) 
\\ &= A_\infty(w;w-v)-A_\infty(v;w-v).
\end{split}
\end{equation}
where we have used Corollary~\ref{cor:jump_term_limits} for the first equality, followed by the strong monotonicity bound~\eqref{eq:strong_monotonicity}, and then an application of the asymptotic consistency shown by Lemma~\ref{lem:asymptotic_consistency}.
\Fqed\end{proof}

\paragraph{Limit problem.}
We recall that the nonlinear form $A_\infty \colon \Vinfty^s\times\Vinfty^s\tends \R$ defined in~\eqref{eq:Ainfty_def} is Lipschitz continuous, see~\eqref{eq:Ainfty_lipschitz}, and is furthermore strongly monotone as shown by Lemma~\ref{lem:limit_strong_monotonicity}. Recall also that $\Vinfty^s$ is a Hilbert space since it is a closed subspace of the Hilbert space $\HD$, see~Theorem~\ref{thm:completeness_H2}.
The Browder--Minty theorem can then be applied to deduce that there exists a unique solution $u_\infty \in \Vinfty^s$ of the limit problem
\begin{equation}\label{eq:u_infty_def}
\begin{aligned}
A_\infty(u_\infty;v) = 0 &&&\forall v \in\Vinfty^s.
\end{aligned}
\end{equation}
where it is noted that $u_\infty$ depends on $s$, but this is not reflected in the notation as there is no risk of confusion.

\subsection{Convergence of the numerical solutions and proof of Theorem~\ref{thm:main}}\label{sec:main_proof}
Our present goal is to show that the numerical approximations $u_k$ converge to $u_\infty$ and that $u_\infty=u $ the solution of~\eqref{eq:isaacs}, thereby proving~Theorem~\ref{thm:main}.
The following Lemma provides the first step by proving the convergence of the discrete solutions of the numerical scheme~\eqref{eq:num_scheme} to the solution of the limit problem~\eqref{eq:u_infty_def}, in the spirit of the analysis of Galerkin's method for strongly monotone operators.
\begin{lemma}[Convergence to $u_\infty$]\label{lem:uinfty_convergence}
The sequence of numerical solutions $u_k\in\Vk^s$ defined by~\eqref{eq:num_scheme} satisfies
\begin{equation}\label{eq:uinfty_converges}
\lim_{k\tends \infty}\normk{u_\infty- u_k} =0.
\end{equation}
\end{lemma}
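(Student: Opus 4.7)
The plan is a standard Galerkin-type monotonicity argument, driven by the strong monotonicity of $A_k$ on $\Vk^s$ from \eqref{eq:strong_monotonicity} combined with the asymptotic consistency of Lemma~\ref{lem:asymptotic_consistency}. Since the sequence $\{u_k\}_{k\in\N}$ is uniformly bounded in $\normk{\cdot}$ by \eqref{eq:num_sol_bounded}, I would establish \eqref{eq:uinfty_converges} via a subsequence principle: it suffices to show that every subsequence of $\{u_k\}$ admits a further subsequence along which $\normk{u_\infty - u_{k_j}}\tends 0$, and this yields convergence of the entire sequence.

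Given such a subsequence, Theorem~\ref{thm:weak_convergence} provides a further subsequence $\{u_{k_j}\}$ with some weak limit $u_\star\in\Vinfty^s$ and an associated $\br\in\Ldd$ satisfying $\br\chi_{\Omp} = \br_\infty(\jump{\Npw u_\star})$, with the convergences $\Hk u_{k_j}\rightharpoonup \Hinf u_\star$ and $\br_{k_j}(\jump{\Npw u_{k_j}})\rightharpoonup \br$ in $\Ldd$. Next, by Theorem~\ref{thm:limit_space_characterization} I choose approximating functions $v_k\in\Vk^s$ with $\normk{u_\infty - v_k}\tends 0$, so that Lemma~\ref{lem:lifted_Hess_convergence} yields strong convergences $\Hk v_k \tends \Hinf u_\infty$ and $\br_k(\jump{\Npw v_k})\tends \br_\infty(\jump{\Npw u_\infty})$ in $\Ldd$.

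The central step is to exploit the Galerkin identity $A_k(u_k; u_k-v_k)=0$ from \eqref{eq:num_scheme} together with the strong monotonicity \eqref{eq:strong_monotonicity}:
\begin{equation*}
\frac{1}{\cmon}\normk{u_k - v_k}^2 \leq A_k(u_k; u_k - v_k) - A_k(v_k; u_k - v_k) = -A_k(v_k; u_k - v_k),
\end{equation*}
and then apply Lemma~\ref{lem:asymptotic_consistency} to the pair $(v_{k_j}, u_{k_j}-v_{k_j})$. The strongly convergent first argument $v_{k_j}\to u_\infty$ is immediate; for the second, one combines the weak and strong convergences listed above to get $\Hk(u_{k_j}-v_{k_j})\rightharpoonup \Hinf(u_\star - u_\infty)$ and $\br_{k_j}(\jump{\Npw(u_{k_j}-v_{k_j})})\rightharpoonup \tilde{\br} := \br - \br_\infty(\jump{\Npw u_\infty})$. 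Using that $\br_\infty(\jump{\Npw u_\infty})$ is essentially supported on $\Omp$ (by construction of $\br_\infty^F$), together with $\br\chi_{\Omp}=\br_\infty(\jump{\Npw u_\star})$ and linearity, I verify the required compatibility $\tilde{\br}\chi_{\Omp}=\br_\infty(\jump{\Npw(u_\star - u_\infty)})$. Lemma~\ref{lem:asymptotic_consistency} then produces $A_{k_j}(v_{k_j}; u_{k_j}-v_{k_j}) \tends A_\infty(u_\infty; u_\star - u_\infty)$.

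The crucial and somewhat subtle observation is that $u_\star - u_\infty \in \Vinfty^s$, so the limit problem \eqref{eq:u_infty_def} forces $A_\infty(u_\infty; u_\star - u_\infty)=0$; notably, we never need to identify $u_\star$ with $u_\infty$ a priori, which circumvents a separate Minty-type argument. Combined with the monotonicity bound, this gives $\normk{u_{k_j}-v_{k_j}}\tends 0$, and the triangle inequality with $\normk{u_\infty - v_{k_j}}\tends 0$ concludes the subsequence argument. The main obstacle I anticipate is the bookkeeping needed to identify the weak limit $\tilde{\br}$ of $\br_{k_j}(\jump{\Npw(u_{k_j}-v_{k_j})})$ and to verify the compatibility condition $\tilde{\br}\chi_{\Omp}=\br_\infty(\jump{\Npw(u_\star - u_\infty)})$; this is exactly the point at which the careful handling of the support properties of the limit lifting operators developed in Section~\ref{sec:limspace} pays off.
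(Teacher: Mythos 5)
Your proposal is correct and follows essentially the same route as the paper's proof: boundedness of $\{u_k\}$ and Theorem~\ref{thm:weak_convergence} yield a weakly convergent subsequence with limit $u_\star\in\Vinfty^s$, Theorem~\ref{thm:limit_space_characterization} and Lemma~\ref{lem:lifted_Hess_convergence} give a strongly convergent recovery sequence $v_k\to u_\infty$, and the strong monotonicity bound~\eqref{eq:strong_monotonicity} combined with the Galerkin identity $A_{k_j}(u_{k_j};\cdot)=0$ and the asymptotic consistency of Lemma~\ref{lem:asymptotic_consistency} yields $\normk{u_{k_j}-v_{k_j}}\tends 0$ via $A_\infty(u_\infty;u_\star-u_\infty)=0$. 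Your observation that one never needs to first identify $u_\star=u_\infty$ is exactly right and is the same mechanism the paper relies on, and your explicit verification of the compatibility condition $\tilde{\br}\chi_{\Omp}=\br_\infty(\jump{\Npw(u_\star-u_\infty)})$ for the second argument of Lemma~\ref{lem:asymptotic_consistency} is a useful detail that the paper leaves implicit when it asserts the hypotheses are satisfied.
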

\begin{proof}
Theorem~\ref{thm:limit_space_characterization} shows that there exists a sequence $\{v_k\}_{k\in\N}$ such that $v_k\in\Vk^s$ for each $k\in\N$ and such that $\normk{u_\infty-v_k}\tends 0$ as $k\tends \infty$. Lemma~\ref{lem:lifted_Hess_convergence} shows that $\bm{H}_k v_k\tends \bm{H}_\infty u_\infty $ and $\br_{k}(\jump{\Npw v_k})\tends \br_\infty(\jump{\Npw v})$ in $\Ldd$ as $k\tends \infty$.
Recall also that the sequence of numerical solutions is uniformly bounded, see~\eqref{eq:num_sol_bounded}, and thus Theorem~\ref{thm:weak_convergence} shows that there exists a $u_* \in \Vinfty^s$ and a $\br \in \Ldd$ such that $\br \chi_{\Omp}=\br_\infty(\jump{\Npw u_*})$ a.e.\ in $\Om$, and a subsequence $\{u_{k_j}\}_{j\in\N}$ such that $u_{k_j}\tends u_* $ in $L^2(\Om)$, $\Npw u_{k_j}\tends \Npw u_*$ in $\Ld$ and $\bm{H}_{k_j} u_{k_j}\rightharpoonup \bm{H}_\infty u_*$ and $\br_{k_j}(\jump{\Npw u_{k_j}})\rightharpoonup \br$ in $\Ldd$ as $j\tends \infty$. The sequences $\{v_{k_j}\}_{j\in\N}$ and $\{v_{k_j}-u_{k_j}\}_{j\in\N}$ therefore satisfy the hypotheses of Lemma~\ref{lem:asymptotic_consistency}.
Therefore, using the strong monotonicity of the nonlinear forms and asymptotic consistency, we get
\begin{equation*}
\begin{split}
\lim_{j\tends\infty}\frac{1}{\cmon}\norm{v_{k_j}-u_{k_j}}_{k_j}^2 & \leq \lim_{j\tends\infty} \left(A_{k_j}(v_{k_j};v_{k_j}-u_{k_j})-A_{k_j}(u_{k_j};v_{k_j}-u_{k_j}) \right)
\\ &= \lim_{j\tends \infty} A_{k_j}(v_{k_j};v_{k_j}-u_{k_j}) = A_\infty(u_\infty;u_\infty-u_*) =0,
\end{split}
\end{equation*}
where we have used the definition of numerical scheme~\eqref{eq:num_scheme}, then we have passed to the limit using~\eqref{eq:asymptotic_consistency} and finally we have used the definition of the limit problem~\eqref{eq:u_infty_def}. Therefore, the triangle inequality and the convergence of the $v_k$ to $u_\infty$ imply that $\norm{u_\infty-u_{k_j}}_{k_j}\tends 0 $ as $j\tends \infty$. Since $u_\infty \in \Vinfty^s$ is uniquely defined, the uniqueness of limits and a standard contradiction argument then imply that the whole sequence $u_k$ converges to $u_\infty$ and that \eqref{eq:uinfty_converges} holds.
\Fqed\end{proof}

The next Lemma proves that the maximum element-wise error estimator of the numerical approximations converges to zero in the limit as a consequence of the marking condition~\eqref{eq:max_marking}.
Recall that the elementwise estimators $\{\ek(u_k,K)\}_{K\in\Tk}$ are defined by~\eqref{eq:estimator_def}.

\begin{lemma}\label{lem:max_estimator_vanishes}
For any marking scheme that satisfies~\eqref{eq:max_marking}, we have
\begin{equation}\label{eq:max_estimator_vanishes}
\lim_{k\tends\infty} \max_{K\in\Tk}\ek(u_k,K) =0.
\end{equation}
\end{lemma}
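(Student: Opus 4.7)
My plan is a contradiction argument exploiting the max-marking rule. Suppose the claim fails; then there exist $\eps > 0$, a subsequence $\{k_j\}_{j\in\N}$, and elements $K_j \in \cT_{k_j}$ with $\eta_{k_j}(u_{k_j}, K_j) = \max_{K \in \cT_{k_j}} \eta_{k_j}(u_{k_j}, K) \geq \eps$. The marking condition~\eqref{eq:max_marking} allows me to assume $K_j \in \mathcal{M}_{k_j}$, so step~4 of the adaptive algorithm ensures $K_j \notin \cT_{k_j+1}$, whence $K_j \in \cT_{k_j}^-$. Lemma~\ref{lem:hjvanishes} then yields $h_{K_j} \leq \norm{h_{k_j}\chi_{\Om_{k_j}^-}}_{L^\infty(\Om)} \to 0$, and in particular $\abs{K_j} = h_{K_j}^\dim \to 0$ as $j \to \infty$.

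The next step is to show $\eta_{k_j}(u_{k_j}, K_j)\to 0$, which would contradict the lower bound above. I decompose $[\eta_{k_j}(u_{k_j}, K_j)]^2$ as in~\eqref{eq:estimator_def} into a residual term and jump terms, and estimate each separately. For the residual, Lemma~\ref{lem:uinfty_convergence} gives $\norm{u_\infty - u_{k_j}}_{k_j} \to 0$; combined with the Lipschitz continuity of $\Fg$ (the extension of~\eqref{eq:cordes_ineq2} to $\HD + V_{k_j}^s$), this yields $\Fg[u_{k_j}] \to \Fg[u_\infty]$ in $L^2(\Om)$. Since $\abs{K_j} \to 0$, a triangle inequality together with the absolute continuity of the Lebesgue integral of $\abs{\Fg[u_\infty]}^2$ will produce $\int_{K_j}\abs{\Fg[u_{k_j}]}^2 \to 0$.

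The jump contribution is the crux of the argument. The key observation is that because $K_j\in\cT_{k_j}^-$, no face of $\p K_j$ can belong to $\cF^{\dagger}_{k_j}$: by the definition~\eqref{eq:Fks_def}, any face in $\cF^{\dagger}_{k_j}$ is shared either by two elements of $\cT_{k_j}^+$ or by one element of $\cT_{k_j}^+$ and $\p\Om$, both impossible since $K_j\notin\cT_{k_j}^+$. Hence the faces of $K_j$ lie in $\cF_{k_j}\setminus\cF^{\dagger}_{k_j}$, so the jump contribution to $[\eta_{k_j}(u_{k_j}, K_j)]^2$ is bounded by $\mathcal{J}_{k_j}(u_{k_j})^2$, where I set
\[
\mathcal{J}_k(v)^2 \coloneqq \int_{\cF^{I}_k\setminus\cF^{I\dagger}_k}h_k^{-1}\abs{\jump{\Npw v}}^2 + \int_{\cF_k\setminus\cF^{\dagger}_k}h_k^{-3}\abs{\jump{v}}^2.
\]
The triangle inequality together with the trivial bound $\mathcal{J}_k(v) \leq \norm{v}_k$ then gives $\mathcal{J}_{k_j}(u_{k_j}) \leq \mathcal{J}_{k_j}(u_\infty) + \norm{u_\infty - u_{k_j}}_{k_j}$; the first summand vanishes by Corollary~\ref{cor:jump_term_limits} applied to $u_\infty \in \Vinfty^s$, and the second by Lemma~\ref{lem:uinfty_convergence}. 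Combined with the residual bound, this produces $\eta_{k_j}(u_{k_j}, K_j) \to 0$ and closes the contradiction.

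I anticipate the main obstacle to be precisely this jump contribution; its resolution hinges on the observation that an element $K_j$ being refined forces its faces out of $\cF^{\dagger}_{k_j}$, which reduces the local jump estimate to a ``tail'' quantity handled uniformly in $v$ through Corollary~\ref{cor:jump_term_limits} and the strong convergence of $u_{k_j}$ to $u_\infty$.
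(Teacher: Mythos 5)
Your proposal is correct and follows essentially the same route as the paper's proof: both reduce to elements of $\Tk^-$ via the max-marking condition and the refinement step, both split the residual term (handled by Lipschitz continuity of $\Fg$, $\normk{u_\infty-u_k}\to0$ from Lemma~\ref{lem:uinfty_convergence}, and vanishing element measures from Lemma~\ref{lem:hjvanishes}) from the jump terms, and both resolve the jump terms by the observation that faces of elements in $\Tk^-$ lie in $\Fk\setminus\Fks$, so they form a tail that vanishes by Corollary~\ref{cor:jump_term_limits}. The only stylistic difference is that you phrase the argument as a contradiction on a subsequence, whereas the paper gives a direct estimate on the full maximum over $\Tk^-$; the underlying ingredients and key observation are identical.
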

\begin{proof}
The marking condition~\eqref{eq:max_marking}, the fact that any marked element is refined, and the Lipschitz continuity of $\Fg$ imply that
\begin{multline}\label{eq:max_estimator_vanishes_1}
\max_{K\in\Tk}\ek(u_k,K)^2 = \max_{K\in\Tk^-}[\ek(u_k,K)]^2  \lesssim \normk{u_\infty-u_k}^2 \\ + \max_{K\in\Tk^-}\left[\int_K \abs{\Fg[u_\infty]}^2 + \sum_{F\in\Fki;F\subset \p K} \int_F h_k^{-1}\abs{\jump{\Npw u_\infty}}^2 + \sum_{F\in\Fk; F\subset \p K}\int_{F} h_k^{-3} \abs{\jump{u_\infty}}^2 \right].
\end{multline}
Note that $\normk{u_\infty-u_k}\tends 0$ as $k\tends \infty$ as shown by Lemma~\ref{lem:uinfty_convergence}.
Lemma~\ref{lem:hjvanishes} shows that the elements of $\Tk^-$ have uniformly vanishing measures in the limit, and thus the square integrability of $\Fg[u_\infty]$ implies that $ \max_{K\in\Tk^-}\int_K \abs{\Fg[u_\infty]}^2 \tends 0$ as $k\tends \infty$.
Finally, for any $K\in\Tk^-$, the faces of $K$ belong to $\Fk\setminus \Fks$ and thus
 \begin{multline*}
  \max_{K\in\Tk^-}\left[\sum_{F\in\Fki;F\subset \p K} \int_F h_k^{-1}\abs{\jump{\Npw u_\infty}}^2+\sum_{F\in\Fk; F\subset \p K}\int_{F} h_k^{-3} \abs{\jump{u_\infty}}^2 \right]
  \\ \leq  \int_{\Fki\setminus\Fksi} h_k^{-1}\abs{\jump{\Npw u_\infty}}^2 + \int_{\Fk\setminus\Fks} h_k^{-3}\abs{\jump{u_\infty}}^2.
\end{multline*}
Using~\eqref{eq:jump_term_vanish}, we then deduce that all terms on the right-hand side of~\eqref{eq:max_estimator_vanishes_1} vanish in the limit as $k\tends \infty$, which implies~\eqref{eq:max_estimator_vanishes}. 
\Fqed\end{proof}

We are now ready to prove our the main result of this work.

\noindent\emph{Proof of Theorem~\ref{thm:main}.}
The proof consists of several steps.

\emph{Step~1.} We first show that the jump $\jump{u_\infty}_F$, respectively  $\jump{\Npw u_\infty}_F$, vanishes identically for all faces $F\in\Fp$, respectively $F\in\Fpi$, which will imply that $u_\infty \in H^2(\Om)\cap H^1_0(\Om)$.
Moreover we show that $\Fg[u_\infty]=0$ a.e. in $\Omp$. 
To do so, consider an arbitrary but fixed $K\in\Tp$; then, there exists an $\ell\in\N$ such that $K\in\cT_{k}^{1+}$ for all $k\geq \ell$. Note then that each face of $F$ of $K$ is in $\Fks$ and $h_k|_F=h_+|_F$ all $k\geq \ell$. So, for all $k\geq \ell$, the triangle inequality shows that
\begin{multline}\label{eq:u_infty_est_vanish}
\int_K \abs{\Fg[u_\infty]}^2 + \sum_{F\in\Fpi;F\subset \p K} \int_F h_+^{-1}\abs{\jump{\Npw u_\infty}}^2 + \sum_{F\in\Fp; F\subset \p K}\int_{F} h_+^{-3} \abs{\jump{u_\infty}}^2 \\
\lesssim \normk{u_\infty-u_k}^2 + [\ek(u_k,K)]^2 \leq \normk{u_\infty-u_k}^2 + \max_{K^\prime\in\Tk}[\ek(u_k,K^\prime)]^2.
\end{multline}
Then, Lemmas~\ref{lem:uinfty_convergence} and~\ref{lem:max_estimator_vanishes} imply that all of the terms in the right-hand side of~\eqref{eq:u_infty_est_vanish} above vanish in the limit as $k\tends\infty$, and thus the left-hand side, which is independent of $k$, vanishes identically.
Therefore, $\Fg[u_\infty]=0$ a.e.\ on $K$ and $\jump{\Npw u_\infty}_F=0$ for each interior face $F$ of $K$ and $\jump{u_\infty}_F=0$ for each face $F$ of $K$.
Recalling that $K\in\Tp$ was arbitrary, it follows that $\Fg[u_\infty]=0$ a.e.\ in~$\Omp$ since $\Omp$ is the countable union of all elements in $\Tp$.
Furthermore, since each face of $\Fp$ is a face of an element in $\Tp$, we also conclude that $\jump{u_\infty}_F=0$ for all faces $F\in\Fp$ and that $\jump{\Npw u_\infty}_F=0$ for all faces $F\in\Fpi$.
We then infer that $u_\infty \in H^2(\Om)\cap H^1_0(\Om)$ from the definition of the space $\HD$ in Definition~\ref{def:HD_def}, the forms of the first and second distributional derivatives of $u_\infty$ in \eqref{eq:distderiv_H100Tp} and \eqref{eq:distderiv_h2}, and from the characterization of $H^1_0(\Om)$ in~\cite[Theorem~5.29]{AdamsFournier03}.

\emph{Step~2.} The fact that $u_\infty \in H^2(\Om)\cap H^1_0(\Om)$ and Lemma~\ref{lem:uinfty_convergence} then imply that the jump seminorms of the numerical solutions vanish in the limit, i.e.\
\begin{equation}\label{eq:jump_numsol_vanish}
\lim_{k\tends \infty} \absJ{u_k} = \lim_{k\tends\infty} \absJ{u_k-u_\infty} \leq \lim_{k\tends\infty} \normk{u_k-u_\infty} =0,
\end{equation}
where it is recalled that the jump seminorm $\absJ{\cdot}$ is defined in~\eqref{eq:norm_def}.

\emph{Step~3.} We now prove that $u_\infty=u $ is the exact solution of~\eqref{eq:isaacs}. Let $z \coloneqq u_\infty-u$, and note that $z\in H^2(\Om)\cap H^1_0(\Om)$.
Since the mesh size vanishes uniformly in the limit on $\Omm$, c.f.~Lemma~\ref{lem:hjvanishes},
by using a similar quasi-interpolant as the one in the proof of Theorem~\ref{thm:limit_space_characterization}, we see  that there exists a $z_k \in \Vk^s$ such that $\normk{z_k}\lesssim \norm{z}_{H^2(\Om)}$ for all $k\in\N$, and such that $\norm{\nabla^m(z-z_k)}_{\Omm} \tends 0$ as $k \tends \infty$ for each $m\in\{0,1,2\}$ as a consequence of Lemma~\ref{lem:hjvanishes}.
Then, the strong monotonicity bound~\eqref{eq:continuous_strong_monotonicity} implies that
\begin{equation}
\norm{u_\infty-u}_{H^2(\Om)}^2 \lesssim \int_{\Om} (\Fg[u_\infty]-\Fg[u])\Delta z = \int_{\Om}\Fg[u_\infty] \Delta z.
\end{equation}
Then, by addition/subtraction of $\int_\Om \Fg[u_k]\Delta z$ and using $A_k(u_k;z_k)=0$ by~\eqref{eq:num_scheme}, we find that
\begin{equation}\label{eq:convergence_1}
\norm{u_\infty-u}_{H^2(\Om)}^2 \lesssim \int_{\Om} (\Fg[u_\infty]-\Fg[u_k])\Delta z + \int_\Om \Fg[u_k]\Delta(z-z_k)  - \theta S_k(u_k,z_k) - \Jpen(u_k,z_k).
\end{equation}
We now claim that each of the terms on the right-hand side of~\eqref{eq:convergence_1} vanish in the limit as $k\tends \infty$, which will then imply that $u_\infty = u$. The first term $\int_\Om (\Fg[u_\infty]-\Fg[u_k])\Delta z$ vanishes in the limit owing to the Lipschitz continuity of $\Fg$ and to the strong convergence $\normk{u_\infty-u}\tends 0$ as $k\tends \infty$.
Turning our attention towards the second term in the right-hand side of~\eqref{eq:convergence_1}, we find that
\begin{equation}\label{eq:convergence_2}
\begin{split}
\left\lvert\int_\Om \Fg[u_k]\Delta(z-z_k)\right\rvert & \leq \left\lvert \int_{\Omp}(\Fg[u_k]-\Fg[u_\infty])\Delta(z-z_k) \right\rvert + \left\lvert\int_{\Omm}\Fg[u_k]\Delta(z-z_k)\right\rvert
\\ & \lesssim  \normk{u_k-u_\infty}\norm{z}_{H^2(\Om)}+\normk{u_k}\norm{\Dpw(z-z_k)}_{\Omm},
\end{split}
\end{equation}
where in the first inequality we used the fact that $\Fg[u_\infty]=0$ a.e.\ in $\Omp$, and in the second inequality we have used the stability bound $\norm{\Delta(z-z_k)}_{\Omp}\lesssim\norm{z}_{H^2(\Om)}$. Therefore, we infer that $\left\lvert\int_\Om \Fg[u_k]\Delta(z-z_k)\right\rvert \tends 0$ as $k\tends \infty$ from the boundedness of the sequence of numerical solutions, see~\eqref{eq:num_sol_bounded}, from the limit $\normk{u_\infty-u_k}\tends 0$ and from the convergence $\norm{\nabla^m(z-z_k)}_{\Omm} \tends 0$ for all $m\in\{0,1,2\}$ as $k \tends \infty$.
For the last two remaining terms in~\eqref{eq:convergence_1}, we apply Theorem~\ref{thm:b_k_jump_bound} and deduce that
\begin{equation}\label{eq:convergence_3}
\left\lvert S_k(u_k,z_k)  \right\rvert + \left\lvert \Jpen(u_k,z_k)\right\rvert\lesssim C_{\sigma,\rho} \absJ{u_k}\absJ{z_k},
\end{equation}
where $C_{\sigma,\rho}$ is a constant depending only on $\sigma$ and $\rho$. We then use the convergence of the jump seminorms in~\eqref{eq:jump_numsol_vanish} and the boundedness $\absJ{z_k}\lesssim\normk{z_k}\lesssim \norm{z}_{H^2(\Om)}$ to conclude that $ S_k(u_k,z_k) \tends 0$ and $\Jpen(u_k,z_k)\tends 0$ as $k\tends \infty$. Thus we have established that all terms in the right-hand side of~\eqref{eq:convergence_1} vanish in the limit as $k\tends \infty$ and we deduce that $u_\infty=u$ is the exact solution of~\eqref{eq:isaacs}.

We then conclude that $\normk{u-u_k}=\normk{u_\infty-u_k}\tends 0$ as $k\tends \infty$, which proves the first statement in~\eqref{eq:main}. The convergence of the estimators $\ek(u_k)\tends 0$ as $k\tends 0$ then follows immediately from the global efficiency bound~\eqref{eq:global_efficiency}, thus completing the proof of \eqref{eq:main} and of Theorem~\ref{thm:main}.\qed

\end{document}